\newcommand{\cz}[2]{C_{#1}(#2)}
\newcommand{\n}[2]{N_{#1}(#2)}
\newcommand{\op}[2]{O_{#1}(#2)}
\newcommand{\oupper}[2]{O^{#1}(#2)}
\newcommand{\compp}[1]{\operatorname{comp}(#1)}
\newcommand{\comp}[2]{\operatorname{comp}_{#1}(#2)}
\newcommand{\compsol}[1]{\comp{\mathrm{sol}}{#1}}
\newcommand{\compasol}[1]{\comp{A,\mathrm{sol}}{#1}}
\newcommand{\layerr}[1]{E(#1)}
\newcommand{\layer}[2]{E_{#1}(#2)}
\newcommand{\gfitt}[1]{F^{*}(#1)}
\newcommand{\sol}[1]{\operatorname{sol}(#1)}
\newcommand{\zz}[1]{Z(#1)}
\newcommand{\card}[1]{|\,#1\,|}
\newcommand{\syl}[2]{\operatorname{Syl}_{#1}(#2)}
\newcommand{\aut}[1]{\operatorname{Aut}(#1)}
\newcommand{\out}[1]{\operatorname{Out}(#1)}
\newcommand{\gen}[2]{\langle \;#1 \mid #2\; \rangle}
\newcommand{\listgen}[1]{\langle \;#1\; \rangle}
\newcommand{\hyp}[1]{\operatorname{\rm Hyp}(#1)}
\newcommand{\set}[2]{\{ \;#1 \mid #2\;\}}
\newcommand{\listset}[1]{\{ \,#1\, \}}
\newcommand{\ltwo}[1]{\operatorname{L}_{2}(#1)}
\newcommand{\uthree}[1]{\operatorname{U}_{3}(#1)}
\newcommand{\sz}[1]{\operatorname{Sz}(#1)}
\newcommand{\sym}[1]{\operatorname{Sym}(#1)}
\newcommand{\badfour}{\listset{\ltwo{2^{r}}, \ltwo{3^{r}}, \uthree{2^{r}}, \sz{2^{r}}}}
\newcommand{\isomorphic}{\cong}
\newcommand{\normal}{\,\unlhd\,}
\newcommand{\subnormal}{\,\unlhd\unlhd\,}
\newcommand{\characteristic}{\operatorname{char}}
\newcommand{\br}[1]{\overline{#1}}
\newcommand{\nonid}{^\#}
\newcommand{\fancyP}{\mathcal P} 
\newcommand{\ICiteLemmaPsix}{\cite[Lemma~3.6]{I}}
\newcommand{\ICiteLemmaPeight}{\cite[Lemma~3.8]{I}}
\newcommand{\ICiteCoprimeActionComm}{\cite[Coprime Action(a)]{I}}
\newcommand{\ICiteCoprimeActionProd}{\cite[Coprime Action(e)]{I}}
\newcommand{\ICiteCoprimeActionSZ}{\cite[Coprime Action(g)]{I}}
\newcommand{\ICiteTheoremKone}{\cite[Theorem~4.1]{I}}
\newcommand{\ICiteTheoremKoneC}{\cite[Theorem~4.1(c)]{I}}
\newcommand{\ICiteTheoremKfour}{\cite[Theorem~4.4]{I}}
\newcommand{\ICiteTheoremKfourA}{\cite[Theorem~4.4(a)]{I}}
\newcommand{\ICiteTheoremKfourC}{\cite[Theorem~4.4(c)]{I}}
\newcommand{\ICiteSectionAQS}{\cite[\S6]{I}}
\newcommand{\ICiteLemmaAQSfive}{\cite[Lemma~6.5]{I}}
\newcommand{\ICiteLemmaAQSeightB}{\cite[Lemma~6.8(b)]{I}}
\newcommand{\ICiteCorollaryAQSnine}{\cite[Corollary~6.9]{I}}
\newcommand{\ICiteLemmaAQStwelve}{\cite[Lemma~6.12]{I}}
\newcommand{\ICiteTheoremAQSfiveABandTheoremKone}{\cite[Theorem~6.5(a),(b) and Theorem~4.1]{I}}
\newcommand{\ICiteLemmasAQSsixANDAQSeight}{\cite[Lemmas~6.6 and 6.7]{I}}
\newcommand{\ICiteSectionL}{\cite[\S9]{I}}
\newcommand{\ICiteTheoremLone}{\cite[Theorem~9.1]{I}}
\newcommand{\ICiteLemmaGthree}{\cite[Lemma~8.3]{I}}
\newtheorem{Theorem}{Theorem}[section]
\newtheorem{Lemma}[Theorem]{Lemma}
\newtheorem{Corollary}[Theorem]{Corollary}
\newtheorem{Hypothesis}[Theorem]{Hypothesis}
\newtheorem{Claim}{Claim}
\theoremstyle{definition}
\newtheorem{Definition}[Theorem]{Definition}
\newtheorem{Example}[Theorem]{Example}
\theoremstyle{remark}
\numberwithin{equation}{section}
\begin{document}

\title{Automorphisms of $K$-groups II}


\author{Paul Flavell}
\address{The School of Mathematics\\University of Birmingham\\Birmingham B15 2TT\\Great Britain}
\email{P.J.Flavell@bham.ac.uk}
\thanks{A considerable portion of this research was done whilst the author was in receipt
of a Leverhulme Research Project grant and during visits to the Mathematisches Seminar,
Christian-Albrechts-Universit\"{a}t, Kiel, Germany.
The author expresses his thanks to the Leverhulme Trust for their support and
to the Mathematisches Seminar for its hospitality.}

\subjclass[2010]{Primary 20D45 20D05 20E34 }

\date{}

\begin{abstract}
    This work is a continuation of \emph{Automorphisms of $K$-groups I}, P. Flavell, preprint.
    The main object of study is a finite $K$-group $G$ that admits an elementary
    abelian group $A$ acting coprimely. For certain group theoretic properties $\mathcal P$,
    we study the $AC_{G}(A)$-invariant $\mathcal P$-subgroups of $G$.
    A number of results of McBride, {\em Near solvable signalizer functors on finite groups,}
    J. Algebra {\bf 78}(1) (1982) 181-214 and
    {\em Nonsolvable signalizer functors on finite groups,}
    J. Algebra {\bf 78}(1) (1982) 215-238 are extended.

    One purpose of this work is to build a general theory of automorphisms,
    one of whose applications will be a new proof of the Nonsolvable Signalizer Functor Theorem.
    As an illustration, this work concludes with a new proof of a special case
    of that theorem due to Gorenstein and Lyons.
\end{abstract}

\maketitle
\section{Introduction}\label{intro}
This work is a continuation of \cite{I}.
Namely we consider an elementary abelian group $A$
acting coprimely on the $K$-group $G$.
The main focus is on how the $A\cz{G}{A}$-invariant subgroups
interact with each other and influence the global
structure of $G$.

A new theme introduced is to consider a group theoretic
property $\fancyP$ for which $G$ possesses a unique
maximal normal $\fancyP$-subgroup $\op{\fancyP}{G}$ and
a unique normal subgroup $\oupper{\fancyP}{G}$ that is minimal
subject to the quotient being a $\fancyP$-group.
This leads to the notions of $\fancyP$-component and $(A,\fancyP)$-component
which generalize the notions of $\operatorname{sol}$-component
and $(A,\operatorname{sol})$-component introduced in \cite{I}.
In that paper,
we considered how the $A$-components of an $A\cz{G}{A}$-invariant
subgroup $H$ of $G$ are related to the
$(A,\operatorname{sol})$-components of $G$.
In \S\ref{lglob} we shall develop a partial extension of that
theory to the $(A,\fancyP)$-components of $H$.

If in addition $\fancyP$ is closed under extensions,
it will be shown in \S\ref{p} that $G$ possesses a
unique maximal $A\cz{G}{A}$-invariant $\fancyP$-subgroup.
This generalizes a result of McBride \cite{McB1} who
proved it in the case $\fancyP = $ ``is solvable''.
McBride also introduced the notion of a {\em near $A$-solvable} group.
In \S\ref{nap} we shall extend that work,
introducing the notion of a {\em near $(A,\fancyP)$-group.}

The results of \S\ref{p} and \S\ref{nap} have applications to
the study of nonsolvable signalizer functors.
In \S\ref{md} we shall present a result of McBride \cite[Theorem~6.5]{McB2}.
We have taken the liberty of naming this result the {\em McBride Dichotomy}
since it establishes a fundamental dichotomy in the proof of the
Nonsolvable Signalizer Functor Theorem.
As a further application,
this paper concludes with a new proof of a special case of the
Nonsolvable Signalizer Functor Theorem due to Gorenstein and Lyons \cite{GL}.
\section{$\mathcal P$-components}
Throughout this section we assume the following.
\begin{Hypothesis}\label{pc:1}
    $\fancyP$ is a group theoretic property that satisfies:
    \begin{enumerate}
        \item[1.]   $\fancyP$ is subgroup and quotient closed.

        \item[2.]   If $G/M$ and $G/N$ are $\fancyP$-groups then so is $G/M \cap N$.

        \item[3.]   If $M$ and $N$ are normal $\fancyP$-subgroups of the group $G$
                    then so is $MN$.
    \end{enumerate}
\end{Hypothesis}

\noindent Some obvious examples being: $\fancyP$ = ``is soluble''; ``is nilpotent'';
``is trivial''; ``is of odd order''.

For any group $G$ we define:
\begin{align*}
    \op{\fancyP}{G} &=  \gen{ N \normal G }{ \mbox{$N$ is a $\fancyP$-group} } \\
    \intertext{and}
    \oupper{\fancyP}{G} &=  \bigcap\set{ N \normal G }{ \mbox{$G/N$ is a $\fancyP$-group} }.
\end{align*}

\noindent Then $\op{\fancyP}{G}$ is the unique maximal normal $\fancyP$-subgroup of $G$
and $\oupper{\fancyP}{G}$ is the unique smallest normal subgroup whose
quotient is a $\fancyP$-group.

\begin{Definition}\label{pc:2}
    A \emph{$\fancyP$-component of $G$} is a subgroup $K$ of $G$
    that satisfies \[
        \mbox{$K \subnormal G$, %
        $K/\op{\fancyP}{K}$ is quasisimple and $K = \oupper{\fancyP}{K}$.}
    \]
    The set of $\fancyP$-components of $G$ is denoted by \[
        \comp{\fancyP}{G}
    \]
    and we define \[
        \layer{\fancyP}{G} = \listgen{\comp{\fancyP}{G}}.
    \]
\end{Definition}

\begin{Lemma}\label{pc:3}
    Let $G$ be a group.
    \begin{enumerate}
        \item[(a)]  $\op{\fancyP}{G}$ contains every subnormal $\fancyP$-subgroup of $G$.

        \item[(b)]  If $N \subnormal G$ then $\op{\fancyP}{N} = N \cap \op{\fancyP}{G}$.

        \item[(c)]  If $H \leq G$ then $\op{\fancyP}{G} \cap H \leq \op{\fancyP}{H}$.

        \item[(d)]  If $G = MN$ with $M,N \normal G$ then
                    $\oupper{\fancyP}{G} = \oupper{\fancyP}{M}\oupper{\fancyP}{N}$.
    \end{enumerate}
\end{Lemma}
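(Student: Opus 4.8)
The plan is to handle the four parts more or less independently, in each case leaning on the canonical (hence characteristic) nature of $\op{\fancyP}{\cdot}$ and $\oupper{\fancyP}{\cdot}$ together with the closure properties in Hypothesis~\ref{pc:1}. Two facts recorded just before Definition~\ref{pc:2} will be used repeatedly and without further comment: for every finite group $H$, the subgroup $\op{\fancyP}{H}$ is itself a $\fancyP$-group and the quotient $H/\oupper{\fancyP}{H}$ is itself a $\fancyP$-group; moreover $\op{\fancyP}{H}$ and $\oupper{\fancyP}{H}$ are characteristic in $H$, so they are normal in any group in which $H$ is normal. For (a), I would induct on the subnormal defect $d$ of a subnormal $\fancyP$-subgroup $K$ of $G$. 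If $d\le 1$ then $K\normal G$ is a $\fancyP$-group, so $K\le\op{\fancyP}{G}$ by the very definition of $\op{\fancyP}{G}$. If $d\ge 2$, choose $M$ with $K\subnormal M\normal G$ and $K$ of smaller defect in $M$; by induction $K\le\op{\fancyP}{M}$, and since $\op{\fancyP}{M}$ is a characteristic $\fancyP$-subgroup of $M\normal G$ it is a normal $\fancyP$-subgroup of $G$, hence contained in $\op{\fancyP}{G}$.

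For (b), the inclusion $\op{\fancyP}{N}\le N\cap\op{\fancyP}{G}$ follows from (a): $\op{\fancyP}{N}$ is characteristic in $N$, hence subnormal in $G$, and is a $\fancyP$-group, so it lies in $\op{\fancyP}{G}$. Conversely $N\cap\op{\fancyP}{G}$ is normal in $N$ because $\op{\fancyP}{G}\normal G$, and it is a $\fancyP$-group as a subgroup of the $\fancyP$-group $\op{\fancyP}{G}$ (Hypothesis~\ref{pc:1}.1), so it lies in $\op{\fancyP}{N}$. Part (c) is exactly this second argument with an arbitrary subgroup $H$ in place of $N$: $\op{\fancyP}{G}\cap H\normal H$ and is a $\fancyP$-group, hence is contained in $\op{\fancyP}{H}$; no appeal to (a) is needed here.

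For (d), set $D=\oupper{\fancyP}{M}\,\oupper{\fancyP}{N}$, which is normal in $G$ since each factor is characteristic in one of the normal subgroups $M,N$. To get $\oupper{\fancyP}{G}\le D$, note that $MD/D\isomorphic M/(M\cap D)$ is a quotient of $M/\oupper{\fancyP}{M}$ and hence a $\fancyP$-group by quotient-closure, and likewise $ND/D$ is a $\fancyP$-group; since $G=MN$ we have $G/D=(MD/D)(ND/D)$, a product of two normal $\fancyP$-subgroups, so $G/D$ is a $\fancyP$-group by Hypothesis~\ref{pc:1}.3 and therefore $\oupper{\fancyP}{G}\le D$. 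For the reverse inclusion, $G/\oupper{\fancyP}{G}$ is a $\fancyP$-group, so its subgroup $M\oupper{\fancyP}{G}/\oupper{\fancyP}{G}\isomorphic M/(M\cap\oupper{\fancyP}{G})$ is a $\fancyP$-group, whence $\oupper{\fancyP}{M}\le M\cap\oupper{\fancyP}{G}\le\oupper{\fancyP}{G}$; by symmetry $\oupper{\fancyP}{N}\le\oupper{\fancyP}{G}$, and so $D\le\oupper{\fancyP}{G}$.

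None of this is deep; the only genuinely non-formal ingredient is the well-definedness statements quoted above — that $\op{\fancyP}{H}$ really is a $\fancyP$-group and $H/\oupper{\fancyP}{H}$ really is a $\fancyP$-quotient — which is where Hypothesis~\ref{pc:1}.2, Hypothesis~\ref{pc:1}.3 and the finiteness of the groups enter. Once that is granted, the rest is bookkeeping with characteristic subgroups and the isomorphism theorems. The one spot calling for a little care is the subnormal-defect induction in (a), since that is what makes $\op{\fancyP}{M}$ visible inside $\op{\fancyP}{G}$ and is what part (b) ultimately rests on.
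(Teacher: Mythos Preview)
Your proof is correct and follows essentially the same line as the paper's. The only cosmetic difference is in (a): the paper inducts on $|G|$ via the normal closure $\langle N^{G}\rangle$ rather than on the subnormal defect, but the two inductions are interchangeable and the key step---that $\op{\fancyP}{M}$ is characteristic in $M\normal G$ and hence a normal $\fancyP$-subgroup of $G$---is identical.
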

\begin{proof}
    (a). Let $N$ be a subnormal $\fancyP$-subgroup of $G$ and set $H = \listgen{N^{G}}$.
    If $H = G$ then since $N \subnormal G$ we have $N = G$ and the result is clear.
    Suppose $H \not= G$ then by induction $N \leq \op{\fancyP}{H}$.
    Now $\op{\fancyP}{H} \characteristic H \normal G$
    so $\op{\fancyP}{H} \leq \op{\fancyP}{G}$ and then $N \leq \op{\fancyP}{G}$.

    (b). This follows from (a) and the fact that $\fancyP$ is subgroup closed.

    (c). Because $\fancyP$ is subgroup closed.

    (d). Note that $\oupper{\fancyP}{M} \characteristic M \normal G$.
    Set $\br{G} = G/\oupper{\fancyP}{M}\oupper{\fancyP}{N}$.
    Then $\br{G} = \br{M}\br{N}$.
    Now $\br{M}$ is a $\fancyP$-group since it is a quotient
    of the $\fancyP$-group $M/\oupper{\fancyP}{M}$.
    Similarly so is $\br{N}$.
    Now $\br{M}, \br{N} \normal \br{G}$ whence $\br{G}$ is a $\fancyP$-group
    and so $\oupper{\fancyP}{G} \leq  \oupper{\fancyP}{M}\oupper{\fancyP}{N}$.

    Set $G^{*} = G/\oupper{\fancyP}{G}$.
    Then $M^{*}$ is a $\fancyP$-group so $\oupper{\fancyP}{M} \leq \oupper{\fancyP}{G}$.
    Similarly $\oupper{\fancyP}{N} \leq \oupper{\fancyP}{G}$,
    completing the proof.
\end{proof}
\begin{Lemma}\label{pc:4}
    Let $G$ be a group and suppose $K,L \in \comp{\fancyP}{G}$.
    \begin{enumerate}
        \item[(a)]  If $N \subnormal G$ then
                    $\comp{\fancyP}{N} \subseteq \comp{\fancyP}{G}$.

        \item[(b)]  If $H \leq G$ and $K \leq H$ then $K \in \comp{\fancyP}{H}$.

        \item[(c)]  $K$ is perfect and possesses a unique maximal normal subgroup,
                    namely $\zz{K \bmod \op{\fancyP}{K}}$.

        \item[(d)]  (Wielandt) Suppose $N \subnormal G$.
                    Then either $K \leq N$ or $[K,N] \leq \op{\fancyP}{K}$.

        \item[(e)]  If $K \leq L$ then $K = L$.

        \item[(f)]  Either \[
                        \mbox{$K = L$ or $[K,L] \leq \op{\fancyP}{K} \cap \op{\fancyP}{L}$.}
                    \]
                    In particular, $K$ and $L$ normalize each other.

        \item[(g)]  If $G_{1},\ldots,G_{n} \subnormal G$ and
                    $K \leq \listgen{G_{1},\ldots,G_{n}}$
                    then $K \leq G_{i}$ for some $i$.

        \item[(h)]  If $\mathcal C, \mathcal D \subseteq \comp{\fancyP}{G}$
                    satisfy $\listgen{\mathcal C} = \listgen{\mathcal D}$
                    then $\mathcal C = \mathcal D$.

        \item[(i)]  $[K, \op{\fancyP}{G}\sol{G}] \leq \op{\fancyP}{K}$.
                    In particular $\op{\fancyP}{G}\sol{G}$ normalizes
                    every $\fancyP$-component of $G$.

        \item[(j)]  Set $\br{G} = G/\op{\fancyP}{G}$.
                    The map \[
                        \mbox{$\comp{\fancyP}{G} \longrightarrow \compp{\br{G}}$ %
                        defined by $K \mapsto \br{K}$}
                    \]
                    is an injection.
                    If every $\fancyP$-group is solvable then it is a bijection.
    \end{enumerate}
\end{Lemma}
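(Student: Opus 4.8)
\textit{Proof proposal.}
The plan is to prove the ten parts in the stated order, each resting on those before it, with (d) --- Wielandt's theorem for $\fancyP$-components --- as the technical core. Parts (a) and (b) are immediate, since subnormality is inherited ($K\subnormal G$ and $K\le H\le G$ give $K\subnormal H$) and transitive, while the conditions ``$K/\op{\fancyP}{K}$ quasisimple'' and ``$K=\oupper{\fancyP}{K}$'' depend only on the isomorphism type of $K$. For (c): $K/\op{\fancyP}{K}$ is quasisimple, hence perfect, so $K=K'\op{\fancyP}{K}$, and then $K/K'$ is a quotient of the $\fancyP$-group $\op{\fancyP}{K}$, giving $\oupper{\fancyP}{K}\le K'$; as $K=\oupper{\fancyP}{K}$ this forces $K=K'$. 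Put $Z=\zz{K\bmod\op{\fancyP}{K}}$. Since quasisimple groups are non-abelian, $Z\neq K$; and if $M\normal K$ with $M\neq K$ then $M\op{\fancyP}{K}/\op{\fancyP}{K}$ is a \emph{proper} normal subgroup of the quasisimple group $K/\op{\fancyP}{K}$ --- proper because $M\op{\fancyP}{K}=K$ would make $K/M$ a $\fancyP$-group and so $K=\oupper{\fancyP}{K}\le M$ --- hence it is central, i.e.\ $M\le Z$, so $Z$ is the unique maximal normal subgroup.

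For (d) I would induct on the subnormal defect of $N$ in $G$. If that defect exceeds $1$, pick $N\normal N_{1}\subnormal G$ with $N_{1}$ of smaller defect; the inductive statement applied to $K$ and $N_{1}$ gives either $[K,N_{1}]\le\op{\fancyP}{K}$ (hence $[K,N]\le\op{\fancyP}{K}$), or $K\le N_{1}$, in which case $K\in\comp{\fancyP}{N_{1}}$ by (b) and $N\subnormal N_{1}$ has smaller defect, so induction applies again. This reduces to $N\normal G$. Now work inside $KN$, writing bars for images modulo $\op{\fancyP}{KN}$. Since $K\subnormal KN$, Lemma~\ref{pc:3}(b) gives $\op{\fancyP}{K}=K\cap\op{\fancyP}{KN}$, so $\br{K}\cong K/\op{\fancyP}{K}$ is a subnormal quasisimple subgroup of $\br{KN}$, i.e.\ an ordinary component, while $\br{N}\normal\br{KN}$. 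By Wielandt's theorem for ordinary components --- itself reducing, via the three subgroups lemma, to the case of a normal subgroup --- either $\br{K}\le\br{N}$ or $[\br{K},\br{N}]=1$. If $\br{K}\le\br{N}$, then $KN/N\cong K/(K\cap N)$ embeds in the $\fancyP$-group $\op{\fancyP}{KN}/(\op{\fancyP}{KN}\cap N)$, so $\oupper{\fancyP}{K}\le K\cap N$ and $K\le N$. If $[\br{K},\br{N}]=1$, then $N$ normalizes $M:=K\op{\fancyP}{KN}$; as $M/K$ is a $\fancyP$-group, the $\fancyP$-quotient computation of Lemma~\ref{pc:3}(d) yields $K=\oupper{\fancyP}{M}$, which is therefore characteristic in $M$, so $N$ normalizes $K$ and $[K,N]\le K\cap\op{\fancyP}{KN}=\op{\fancyP}{K}$.

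The remaining parts are deductions. For (e), apply (d) to the $\fancyP$-component $L$ and the subnormal subgroup $K$: either $L\le K$, giving $K=L$, or $[L,K]\le\op{\fancyP}{L}$, which embeds $K/\op{\fancyP}{K}\cong K\op{\fancyP}{L}/\op{\fancyP}{L}$ into the centre of $L/\op{\fancyP}{L}$ --- impossible, as $K/\op{\fancyP}{K}$ is non-abelian. Part (f) follows by applying (d) to $K,L$ in both orders together with (e). For (g): by (d), for each $i$ either $K\le G_{i}$ or $[K,G_{i}]\le\op{\fancyP}{K}$; if the first alternative fails for every $i$, then every $G_{i}$ --- hence $\listgen{G_{1},\ldots,G_{n}}$ --- normalizes $K$, so $K\normal\listgen{G_{1},\ldots,G_{n}}$ and, expanding commutators, $[K,\listgen{G_{1},\ldots,G_{n}}]\le\op{\fancyP}{K}$; since $K\le\listgen{G_{1},\ldots,G_{n}}$ this gives $K=[K,K]\le\op{\fancyP}{K}$, contradicting quasisimplicity. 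Part (h) is immediate from (g) and (e). For (i), put $R=\op{\fancyP}{G}\sol{G}\normal G$ and apply (d): the alternative $K\le R$ is excluded, since by Lemma~\ref{pc:3}(b) it would embed $K/\op{\fancyP}{K}$ into the solvable group $R/\op{\fancyP}{R}$, contradicting quasisimplicity. For (j), $K\mapsto\br{K}$ lands in $\compp{\br{G}}$ because $\op{\fancyP}{K}=K\cap\op{\fancyP}{G}$ by Lemma~\ref{pc:3}(b), so $\br{K}\cong K/\op{\fancyP}{K}$ is subnormal quasisimple in $\br{G}$; it is injective because, with $M=K\op{\fancyP}{G}$, the computation of Lemma~\ref{pc:3}(d) gives $K=\oupper{\fancyP}{M}$, recovering $K$ from $\br{K}=\br{M}$; and if every $\fancyP$-group is solvable then it is surjective, by taking, for an ordinary component $\br{C}$ of $\br{G}$ with preimage $C$ in $G$, a minimal subnormal $K\le C$ with $K\op{\fancyP}{G}=C$ --- solvability of $\fancyP$-groups rules out $\oupper{\fancyP}{K}$ being a proper (hence solvable) subgroup of $K$, so $K=\oupper{\fancyP}{K}$ is the required $\fancyP$-component.

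I expect the reduction in (d) to be the main obstacle: arranging the induction on the defect of $N$ so as to hand off cleanly to the normal-$N$ case, and then keeping straight $\op{\fancyP}{K}$, $\op{\fancyP}{KN}$ and the ambient groups, is the only point where genuine care is needed; everything else is bookkeeping with Definition~\ref{pc:2}, Lemma~\ref{pc:3}, and part (c).
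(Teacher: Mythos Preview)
Your proposal is essentially correct for all parts except that your argument for (d) has a gap, and your route there differs from the paper's.

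The paper proves (d) self-contained: it inducts via $M=\langle N^{G}\rangle$, and in the key case $K\not\le M$ it first shows $K\normal KM$ by a direct argument (if not, choose $g\in KM$ with $K\ne K^{g}$ but $K,K^{g}$ mutually normalizing, and derive from $KM=K^{g}M$ that $K=(K\cap K^{g})(K\cap M)$, contradicting (c)); then $[K,M]\le K\cap M\le Z(K\bmod\op{\fancyP}{K})$, and the Three Subgroups Lemma finishes. Your approach instead quotients $KN$ by $\op{\fancyP}{KN}$, invokes the classical Wielandt theorem for ordinary components to get $\br{K}\le\br{N}$ or $[\br{K},\br{N}]=1$, and then lifts back. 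This is a perfectly reasonable alternative strategy --- it trades the paper's ad hoc conjugate trick for a reduction to a known theorem --- but the lifting step is where you slip.

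In the case $[\br{K},\br{N}]=1$ you write ``as $M/K$ is a $\fancyP$-group, the $\fancyP$-quotient computation of Lemma~\ref{pc:3}(d) yields $K=\oupper{\fancyP}{M}$'', with $M=K\op{\fancyP}{KN}$. But writing $M/K$ already presupposes $K\normal M$, i.e.\ that $\op{\fancyP}{KN}$ normalizes $K$, and Lemma~\ref{pc:3}(d) requires both factors normal. Neither is established: all you know at this point is $K\subnormal KN$ (hence $K\subnormal M$) and $M\normal KN$. A normal subgroup need not normalize an arbitrary subnormal subgroup, and nothing so far forces $\op{\fancyP}{KN}\le N_{KN}(K)$. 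The fix is to climb a subnormal chain $K=K_{0}\normal K_{1}\normal\cdots\normal K_{t}=M$: by Dedekind each $K_{i}=K(K_{i}\cap\op{\fancyP}{KN})$, and at the first step $K\normal K_{1}$ together with $K_{1}\cap\op{\fancyP}{KN}\normal K_{1}$ lets Lemma~\ref{pc:3}(d) give $\oupper{\fancyP}{K_{1}}=K$; hence $K\operatorname{char}K_{1}\normal K_{2}$, and you iterate to $K\normal M$. Once you insert this, your argument goes through. (The analogous invocation of Lemma~\ref{pc:3}(d) in your injectivity proof for (j) is fine, because there $M=K\op{\fancyP}{G}$ and part (i), already proved, gives $[\op{\fancyP}{G},K]\le\op{\fancyP}{K}\le K$, so $K\normal M$ genuinely holds.)
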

\begin{proof}
    (a),(b). These follow immediately from the definition of $\fancyP$-component.

    (c). Suppose that $N$ is a proper normal subgroup of $K$.
    Now $K/\oupper{\fancyP}{K}$ is quasisimple so either $N \leq \zz{K \bmod \op{\fancyP}{K}}$
    or $N$ maps onto $K/\oupper{\fancyP}{K}$.
    Assume the latter.
    Then $K = N\op{\fancyP}{K}$ so
    $K/N \isomorphic \op{\fancyP}{K}/\op{\fancyP}{K} \cap N$,
    hence $K/N$ is a $\fancyP$-group.
    Then $K = \oupper{\fancyP}{K} \leq N$ and $K = N$.
    Thus $\zz{K \bmod \op{\fancyP}{K}}$ is the unique maximal normal subgroup of $K$.
    Also $K/\op{\fancyP}{K}$ is perfect so $K'$ maps onto
    $K/\op{\fancyP}{K}$ and hence $K = K'$.

    (d). Set $M = \listgen{N^{G}}$.
    If $N = G$ then the conclusion is clear so assume $N \not= G$.
    Now $N \subnormal G$ so $M \not= G$.
    If $K \leq M$ then the conclusion follows by induction,
    so we may assume that $K \not\leq M$.

    Suppose that $K$ is not normal in $KM$.
    Now $K \subnormal KM$ so there exists $g \in KM$ such that
    $K$ and $K^{g}$ normalize each other but $K \not= K^{g}$.
    Set $T = K^{g}K$.
    Now $KM = K^{g}M$ so $T = K^{g}(T \cap M)$.
    Note that $K^{g} \normal T$ and $T \cap M \normal T$.
    Since $K \normal T$ and $K$ is perfect,
    we have $K = [K,T] = [K,K^{g}][K,T \cap M]$ and then
    $K = (K \cap K^{g})(K \cap T \cap M)$.
    But $K \cap K^{g}$ and $K \cap T \cap M$ are proper normal subgroups
    of $K$,
    contrary to $K$ having a unique maximal normal subgroup.
    We deduce that $K \normal KM$,
    hence \[
        [K,M] \leq K \cap M \leq \zz{K \bmod \op{\fancyP}{K}}
    \]
    and so $[K,M,K] \leq \op{\fancyP}{K}$.
    Similarly $[M,K,K] \leq \op{\fancyP}{K}$.
    Since $\op{\fancyP}{K} \normal KM$ and $K$ is perfect,
    the Three Subgroups Lemma implies that $[K,M] \leq \op{\fancyP}{K}$.
    Since $N \leq M$ we have $[K,N] \leq \op{\fancyP}{K}$.

    (e). We have $K \subnormal L$ so either $K = L$ or $K \leq \zz{L \bmod \op{\fancyP}{L}}$.
    Assume the latter.
    Now $K$ is perfect,
    whence $K \leq \op{\fancyP}{L}$ and $K$ is a $\fancyP$-group.
    This is not possible since $K = \oupper{\fancyP}{K}$.
    Hence $K = L$.

    (f). Assume that $K \not= L$.
    Then (c) implies $K \not\leq L$ and $L \not\leq K$.
    Two applications of (d) imply $[K,L] \leq \op{\fancyP}{K} \cap \op{\fancyP}{L}$.

    (g). Suppose $K \not\leq G_{i}$ for all $i$.
    Then (d) implies that $\listgen{G_{1},\ldots,G_{n}}$ normalizes $K$
    and centralizes $K/\op{\fancyP}{K}$.
    This is absurd since $K \leq \listgen{G_{1},\ldots,G_{n}}$
    and $K/\op{\fancyP}{K}$ is perfect.

    (h). Let $C \in \mathcal C$.
    By (g) there exists $D \in \mathcal D$ with $C \leq D$.
    Then (e) forces $C = D$,
    whence $\mathcal C \subseteq \mathcal D$.
    Similarly $\mathcal D \subseteq \mathcal C$.

    (i). Since $K$ is not a $\fancyP$-group and is perfect,
    we have $K \not\leq \op{\fancyP}{G}$ and $K \not\leq \sol{G}$.
    Apply (d).

    (j). Since $K \subnormal G$, (a) implies that
    $\op{\fancyP}{K} = \op{\fancyP}{G} \cap K$,
    whence $\br{K} \isomorphic K/\op{\fancyP}{K}$ and so $\br{K}$ is quasisimple.
    Thus $\br{K} \in \compp{\br{G}}$.
    Suppose that $\br{K} = \br{L}$.
    Then $K \leq L\op{\fancyP}{G}$.
    As $K$ is not a $\fancyP$-group,
    (g) implies $K \leq L$ and then (e) forces $K = L$.
    Hence the map is an injection.

    Suppose that every $\fancyP$-group is solvable and that
    $\br{C} \in \compp{\br{G}}$.
    Choose $D$ minimal subject to $D \subnormal G$ and $\br{D} = \br{C}$.
    Suppose that $\oupper{\fancyP}{D} \not= D$.
    Then $\br{\oupper{\fancyP}{D}} \leq \zz{\br{C}}$ whence $\br{C}/\zz{\br{C}}$
    is an image of $\br{D}/\br{\oupper{\fancyP}{D}}$,
    which is an image of $D/\oupper{\fancyP}{D}$.
    Thus $\br{C}/\zz{\br{C}}$ is a $\fancyP$-group.
    This is a contradiction since every $\fancyP$-group is solvable.
    Hence $\oupper{\fancyP}{D} = D$.
    As $D \subnormal G$ we have $\op{\fancyP}{D} = D \cap \op{\fancyP}{G}$
    so $D/\op{\fancyP}{D} \isomorphic \br{C}$,
    which is quasisimple.
    Thus $D \in \comp{\fancyP}{G}$.
\end{proof}

We remark that in (j) the extra condition to ensure that the map
is a bijection is needed. For example, let $\fancyP$ be the property
defined by $G$ is a $\fancyP$-group if and only if $G = \sol{G}\layerr{G}$
and every component of $G$ is isomorphic to $A_{5}$.
Now let $G = A_{5} \operatorname{wr} A_{5}$.

\section{$(A,\mathcal P)$-components}\label{ap}
Throughout this section,
assume the following.
\begin{Hypothesis}\label{ap:1}\mbox{}
    \begin{itemize}
        \item   Hypothesis~\ref{pc:1}.
        \item   $A$ is a finite group.
    \end{itemize}
\end{Hypothesis}

\begin{Definition}\label{ap:2}
    Let $G$ be a group on which $A$ acts.
    An \emph{$(A,\fancyP)$-component of $G$} is an $A$-invariant
    subgroup $K$ of $G$ that satisfies \[
        \mbox{$K \subnormal G$, $K/\op{\fancyP}{K}$ is $A$-quasisimple %
        and $K = \oupper{\fancyP}{K}$}.
    \]
    The set of $(A,\fancyP)$-components of $G$ is denoted by \[
        \comp{A,\fancyP}{G}.
    \]
\end{Definition}

\begin{Lemma}\label{ap:3}
    Let $G$ be a group on which $A$ acts.
    The $(A,\fancyP)$-components of $G$ are the subgroups
    generated by the orbits of $A$ on $\comp{\fancyP}{G}$.
    Distinct orbits generate distinct $(A,\fancyP)$-components.
\end{Lemma}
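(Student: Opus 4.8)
The plan is to establish three things, in this order: (i) the join $\listgen{\mathcal O}$ of any $A$-orbit $\mathcal O$ on $\comp{\fancyP}{G}$ is an $(A,\fancyP)$-component; (ii) conversely, every $(A,\fancyP)$-component has this form; (iii) distinct orbits give distinct joins. Claim~(iii) is immediate: if $\mathcal O_{1},\mathcal O_{2}\subseteq\comp{\fancyP}{G}$ are $A$-orbits with $\listgen{\mathcal O_{1}}=\listgen{\mathcal O_{2}}$, then Lemma~\ref{pc:4}(h) forces $\mathcal O_{1}=\mathcal O_{2}$.

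For~(i), let $\mathcal O=\listset{K_{1},\dots,K_{m}}$ be an $A$-orbit on $\comp{\fancyP}{G}$ and put $K=\listgen{\mathcal O}$. By Lemma~\ref{pc:4}(f) the $K_{i}$ pairwise normalise one another, so $K=K_{1}\cdots K_{m}$ with each $K_{i}\normal K$, and $K$ is $A$-invariant since $A$ permutes $\mathcal O$. Since every member of $\comp{\fancyP}{G}$ normalises each $K_{i}$ (Lemma~\ref{pc:4}(f)) and $\comp{\fancyP}{G}$ is closed under $G$-conjugacy, we get $K\normal\layer{\fancyP}{G}\normal G$, so $K\subnormal G$. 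Lemma~\ref{pc:4}(b),(g),(e) then give $\comp{\fancyP}{K}=\mathcal O$, and an induction on $m$ via Lemma~\ref{pc:3}(d) gives $\oupper{\fancyP}{K}=\oupper{\fancyP}{K_{1}}\cdots\oupper{\fancyP}{K_{m}}=K$. For the $A$-quasisimplicity of $K/\op{\fancyP}{K}$, put $P=\op{\fancyP}{K_{1}}\cdots\op{\fancyP}{K_{m}}$; each factor is normal in $K$, so $P$ is a normal $\fancyP$-subgroup of $K$ by Hypothesis~\ref{pc:1} (products of normal $\fancyP$-subgroups). Using $K_{i}\cap P=\op{\fancyP}{K_{i}}$ and $[K_{i},K_{j}]\leq P$ for $i\neq j$, one sees that $K/P$ is a central product of the quasisimple groups $K_{i}/\op{\fancyP}{K_{i}}$, transitively permuted by $A$. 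None of these is a $\fancyP$-group (as $K_{i}=\oupper{\fancyP}{K_{i}}$), so any normal $\fancyP$-subgroup of $K/P$ meets each such factor inside its centre and commutes with it modulo its centre, hence lies in $\zz{K/P}$ because a central product of quasisimple groups has $Z_{2}=Z$. Applying this to $\op{\fancyP}{K}/P$ shows $K/\op{\fancyP}{K}$ is a central quotient of $K/P$, so it too is $A$-quasisimple. Hence $K\in\comp{A,\fancyP}{G}$.

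For~(ii), let $K\in\comp{A,\fancyP}{G}$ and write $\br K=K/\op{\fancyP}{K}$, with components $\br E_{1},\dots,\br E_{n}$ forming a single $A$-orbit. By Lemma~\ref{pc:4}(j) the map $L\mapsto L\op{\fancyP}{K}/\op{\fancyP}{K}$ is an injection $\comp{\fancyP}{K}\to\compp{\br K}$; the crux is to show it is onto. As the remark after Lemma~\ref{pc:4} shows, this can fail in general, and inspection of the argument for Lemma~\ref{pc:4}(j) shows the sole obstruction is a component $\br E_{i}$ with $\br E_{i}/\zz{\br E_{i}}$ a $\fancyP$-group. But $A$-transitivity then makes \emph{every} $\br E_{j}/\zz{\br E_{j}}$ a $\fancyP$-group, so $\br K/\zz{\br K}$ is a $\fancyP$-group by Hypothesis~\ref{pc:1}, forcing $\oupper{\fancyP}{\br K}\leq\zz{\br K}$; since $K=\oupper{\fancyP}{K}$ gives $\br K=\oupper{\fancyP}{\br K}$ and $\br K$ is perfect, this yields $\br K=1$, absurd. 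So no such $\br E_{i}$ occurs, and for each $i$ the subgroup $\oupper{\fancyP}{P_{i}}$, where $P_{i}$ is the full preimage of $\br E_{i}$ in $K$, is a $\fancyP$-component of $K$ mapping onto $\br E_{i}$ --- checked exactly as in Lemma~\ref{pc:4}(j), now using that $\br E_{i}/\zz{\br E_{i}}$ is not a $\fancyP$-group. Thus the map is an $A$-equivariant bijection, so $\mathcal O:=\comp{\fancyP}{K}$ is a single $A$-orbit, lying in $\comp{\fancyP}{G}$ by Lemma~\ref{pc:4}(a). Finally $\listgen{\mathcal O}=\layer{\fancyP}{K}$ is normal in $K$ with $\fancyP$-quotient (its image in $\br K$ is all of $\br K$), so $K=\oupper{\fancyP}{K}=\listgen{\mathcal O}$.

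The one genuinely delicate step is the surjectivity in~(ii): the map $\comp{\fancyP}{K}\to\compp{\br K}$ is only injective in general, so one must exploit \emph{both} halves of the definition of $(A,\fancyP)$-component --- transitivity of $A$ on the components of $\br K$, and $K=\oupper{\fancyP}{K}$ --- to rule out the bad configuration; transitivity is precisely what propagates the defect of a single bad component to $\br K/\zz{\br K}$. Everything else is routine bookkeeping with Lemmas~\ref{pc:3} and~\ref{pc:4}, together with the standard fact that a central product of quasisimple groups has $Z_{2}=Z$.
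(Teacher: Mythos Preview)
Your proof is correct and runs parallel to the paper's: part~(iii) is identical, and in~(ii) you and the paper both end up passing to $K_{i}=\oupper{\fancyP}{P_{i}}$ with $P_{i}$ the full preimage of $\br E_{i}$ (the paper writes $L_{i}$). The organisation differs, however. In~(i) the paper simply invokes Lemma~\ref{pc:4}(j) with $K$ in the role of $G$ to see that the $K_{i}\op{\fancyP}{K}/\op{\fancyP}{K}$ are the components of $K/\op{\fancyP}{K}$, avoiding your intermediate subgroup $P$. More significantly, in~(ii) the paper sidesteps your surjectivity discussion altogether: from $K=L_{1}\cdots L_{n}$ and $K=\oupper{\fancyP}{K}$, Lemma~\ref{pc:3}(d) gives $K=K_{1}\cdots K_{n}$ at once; this product decomposition forces each $K_{i}$ to map onto $\br E_{i}$, and a second application of Lemma~\ref{pc:3}(d) to $L_{i}=\op{\fancyP}{K}K_{i}$ yields $K_{i}=\oupper{\fancyP}{K_{i}}$. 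Nowhere does one need that $\br E_{i}/\zz{\br E_{i}}$ fails to be a $\fancyP$-group. So your closing diagnosis that transitivity of $A$ is ``precisely what propagates the defect'' somewhat mislocates the crux: the hypothesis $K=\oupper{\fancyP}{K}$ (together with $\br K=\layerr{\br K}$) already forces the map of Lemma~\ref{pc:4}(j) to be onto via Lemma~\ref{pc:3}(d); transitivity is used only afterwards, to conclude that the resulting $K_{i}$ form a single $A$-orbit. Your route is sound but carries extra baggage, and what you flag as the genuinely delicate step is in fact handled more cheaply by the paper.
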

\begin{proof}
    Suppose $\listset{K_{1},\ldots,K_{n}}$ is an orbit of $A$ on $\comp{\fancyP}{G}$
    and define $K = \listgen{K_{1},\ldots,K_{n}}$.
    Certainly $K \subnormal G$.
    Lemma~\ref{pc:4}(f) implies $K_{i} \normal K$ for each $i$,
    so $K = K_{1}\cdots K_{n}$.
    By Lemma~\ref{pc:3}(d),
    $\oupper{\fancyP}{K} = \oupper{\fancyP}{K_{1}} \cdots \oupper{\fancyP}{K_{n}}%
    = K_{1} \cdots K_{n} = K$.
    Using Lemma~\ref{pc:4}(j),
    with $K$ in the role of $G$,
    we see that $K/\op{\fancyP}{K}$ is the central product of the quasisimple
    groups $K_{i}\op{\fancyP}{K}/\op{\fancyP}{K}$ and that these are
    permuted transitively by $A$.
    Thus $K/\op{\fancyP}{K}$ is $A$-quasisimple and hence
    $K$ is an $(A,\fancyP)$-component.

    Conversely suppose that $K \in \comp{A,\fancyP}{G}$.
    Set $\br{K} = K/\op{\fancyP}{K}$ so that
    $\br{K} = \br{K_{1}}*\cdots*\br{K_{n}}$ with each $\br{K_{i}}$ quasisimple
    and $A$ acting transitively on $\listset{\br{K_{1}},\ldots,\br{K_{n}}}$.
    Let $L_{i}$ be the inverse image of $\br{K_{i}}$ in $K$.
    Then $L_{i} \normal K$ and $K = L_{1}\cdots L_{n}$.
    Set $K_{i} = \oupper{\fancyP}{L_{i}} \subnormal G$.
    By Lemma~\ref{pc:3}(d) we have $K = \oupper{\fancyP}{K} = K_{1} \cdots K_{n}$.
    Then $K_{i}$ maps onto $\br{K_{i}}$ so $L_{i} = \op{\fancyP}{K}K_{i}$.
    Again by Lemma~\ref{pc:3}(d),
    $K_{i} = \oupper{\fancyP}{L_{i}} = %
    \oupper{\fancyP}{\op{\fancyP}{K}}\oupper{\fancyP}{K_{i}} %
    = \oupper{\fancyP}{K_{i}}$.
    Thus $K_{i} \in \comp{\fancyP}{G}$ and $K$ is the subgroup generated by
    an orbit of $A$ on $\comp{\fancyP}{G}$.
    Finally, Lemma~\ref{pc:4}(h) implies that distinct orbits generate
    distinct $(A,\fancyP)$-components.
\end{proof}

\begin{Lemma}\label{ap:4}
    Let $G$ be a group on which $A$ acts and suppose
    $K,L \in \comp{A,\fancyP}{G}$.
    \begin{enumerate}
        \item[(a)]  If $N$ is an $A$-invariant subnormal subgroup of $G$
                    then $\comp{A,\fancyP}{N} \subseteq \comp{A,\fancyP}{G}$.

        \item[(b)]  If $H$ is an $A$-invariant subgroup of $G$ and $K \leq H$
                    then $K \in \comp{A,\fancyP}{H}$.

        \item[(c)]  $K$ is perfect and possesses a unique maximal $A$-invariant
                    subnormal subgroup,
                    namely $\zz{K \bmod \op{\fancyP}{K}}$.

        \item[(d)]  Suppose $N$ is an $A$-invariant subnormal subgroup of $G$.
                    Then either $K \leq N$ or $[K,N] \leq \op{\fancyP}{K}$.

        \item[(e)]  If $K \leq L$ then $K = L$.

        \item[(f)]  Either \[
                        \mbox{$K = L$ or $[K,L] \leq \op{\fancyP}{K} \cap \op{\fancyP}{L}$.}
                    \]
                    In particular, $K$ and $L$ normalize each other.

        \item[(g)]  Suppose $G_{1},\ldots,G_{n}$ are $A$-invariant subnormal
                    subgroups of $G$ and $K \leq \listset{G_{1},\ldots,G_{n}}$
                    then $K \leq G_{i}$ for some $i$.

        \item[(h)]  Suppose $\mathcal C, \mathcal D \subseteq \comp{A,\fancyP}{G}$
                    satisfy $\listgen{\mathcal C} = \listgen{\mathcal D}$.
                    Then $\mathcal C = \mathcal D$.

        \item[(i)]  $[K,\op{\fancyP}{G}\sol{G}] \leq \op{\fancyP}{K}$.
                    In particular, $\op{\fancyP}{G}\sol{G}$ normalizes every
                        $(A,\fancyP)$-component of $G$.

        \item[(j)]  Set $\br{G} = G/\op{\fancyP}{G}$.
                    The map \[
                        \mbox{$\comp{A,\fancyP}{G} \longrightarrow \comp{A}{\br{G}}$ %
                        defined by $K \mapsto \br{K}$}
                    \]
                    is an injection.
                    If every $\fancyP$-group is solvable then it is a bijection.

        \item[(k)]  $K \normal \listgen{K^{G}}$.
    \end{enumerate}
\end{Lemma}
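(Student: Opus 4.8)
The plan is to reduce to the level of ordinary $\fancyP$-components, where Lemma~\ref{pc:4}(f) is available, rather than to work directly with $(A,\fancyP)$-components. Set $M = \listgen{K^{G}}$; since $M$ is generated by a conjugacy class of subgroups it is normal in $G$, and it will suffice to show $K \normal M$, for which it is enough to prove that $K^{g}$ normalizes $K$ for every $g \in G$.

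First I would invoke Lemma~\ref{ap:3} to write $K = K_{1}\cdots K_{n}$, where $\listset{K_{1},\ldots,K_{n}}$ is an $A$-orbit on $\comp{\fancyP}{G}$ and each $K_{i} \normal K$ (this is exactly how $K$ is built up in the proof of Lemma~\ref{ap:3}, via Lemma~\ref{pc:4}(f)). Next I would record that $\comp{\fancyP}{G}$ is invariant under conjugation by any $g \in G$: every clause of Definition~\ref{pc:2} --- being subnormal, having $K/\op{\fancyP}{K}$ quasisimple, and satisfying $K = \oupper{\fancyP}{K}$ --- is preserved by an inner automorphism, because $\op{\fancyP}{K^{g}} = \op{\fancyP}{K}^{g}$ and $\oupper{\fancyP}{K^{g}} = \oupper{\fancyP}{K}^{g}$. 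Hence for fixed $g$ we get $K^{g} = K_{1}^{g}\cdots K_{n}^{g}$ with each $K_{i}^{g} \in \comp{\fancyP}{G}$.

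The heart of the argument is then a single application of Lemma~\ref{pc:4}(f) to each pair $K_{i}, K_{j}^{g}$ of $\fancyP$-components of $G$: either $K_{i} = K_{j}^{g}$, or $[K_{i},K_{j}^{g}] \leq \op{\fancyP}{K_{i}} \cap \op{\fancyP}{K_{j}^{g}}$, and in both cases $K_{i}$ and $K_{j}^{g}$ normalize each other. Consequently each generator $K_{j}^{g}$ of $K^{g}$ normalizes every $K_{i}$, so $K^{g}$ normalizes $K = \listgen{K_{1},\ldots,K_{n}}$; letting $g$ range over $G$ gives that $M$ normalizes $K$, that is, $K \normal M$.

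I expect the only real subtlety to be recognising that one cannot apply Lemma~\ref{ap:4}(f) directly, since a conjugate $K^{g}$ need not be $A$-invariant and hence need not be an $(A,\fancyP)$-component --- this is precisely why the proof must descend to the $\fancyP$-component level, where conjugates remain $\fancyP$-components. Once that is seen, the Wielandt-style commutator dichotomy of Lemma~\ref{pc:4}(f) does all the work and nothing computational remains.
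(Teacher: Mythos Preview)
Your proof of (k) is correct and follows essentially the same route as the paper: decompose $K$ into $\fancyP$-components $K_{1},\ldots,K_{n}$ via Lemma~\ref{ap:3}, observe that each $K_{j}^{g}$ is again a $\fancyP$-component of $G$, and apply Lemma~\ref{pc:4}(f) at the $\fancyP$-component level to see that every $K_{j}^{g}$ normalizes every $K_{i}$, whence $K \normal \listgen{K^{G}}$. The paper's version is terser --- it simply says Lemma~\ref{pc:4}(f) gives $K_{i} \normal \listgen{K^{G}}$ --- but the underlying idea, including your observation that one must descend to $\fancyP$-components because $K^{g}$ need not be $A$-invariant, is identical.
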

\begin{proof}
    (a) and (b) are immediate from the definitions.

    (c),(e),(f),(g),(h),(i),(j) follow with the same argument as used in the
    proof of Lemma~\ref{pc:4}.

    (d) follows from Lemmas~\ref{ap:3} and \ref{pc:4}(d).

    (k).\ By Lemma~\ref{ap:3} we have $K = \listgen{K_{1},\ldots,K_{n}}$
    where $\listset{K_{1},\ldots,K_{n}} \subseteq \comp{\fancyP}{G}$
    so Lemma~\ref{pc:4}(f) implies $K_{i} \normal \listgen{K^{G}}$.
    Then $K \normal \listgen{K^{G}}$.
\end{proof}

\section{Preliminaries}\label{prel}

\begin{Lemma}\label{prel:1}
    Let $r$ be a prime and $A$ and elementary abelian
    $r$-group that acts coprimely on the $K$-group $G$.
    Suppose $K \in \comp{A}{G}$ and that $H$ is an
    $A\cz{K}{A}$-invariant subgroup of $G$ with
    $H \cap K \leq \zz{K}$.
    Then $[H,K] = 1$.
\end{Lemma}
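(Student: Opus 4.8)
The plan is to reduce to the case $G=\listgen{H,K}$, pin down what $[H,K]$ can be, and then use coprime action to kill the surviving possibility.

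\medskip
\noindent\textbf{Reduction.} A subnormal subgroup of $G$ lying inside a subgroup $J\le G$ is subnormal in $J$, and $A$-quasisimplicity and perfectness of $K$ are intrinsic to $K$; hence $K\in\comp{A}{\listgen{H,K}}$. Since $A$ and $\cz{K}{A}$ still normalise $H$ and $H\cap K\le\zz{K}$ is unaffected, it suffices to treat the case $G=\listgen{H,K}$. Set $M=\listgen{K^{G}}$. Then $M\normal G$ is $A$-invariant (as $A$ permutes the $G$-conjugates of $K$) and contains $K$, so $G=HM$. By Lemma~\ref{pc:4}(f) (with $\fancyP$ trivial) the components of $G$ appearing in $M$ commute pairwise, so $M$ is their central product $K_{1}*\cdots*K_{m}$, $M$ normalises each $K_{i}$, and $H$ (which normalises $M$) permutes the $K_{i}$; moreover $K=K_{1}*\cdots*K_{n}$ is the product of one $A$-orbit among them.

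\medskip
\noindent\textbf{The dichotomy.} Put $L=[H,K]$; then $L\normal G$ (a commutator of subgroups is normal in their join), $L$ is $A$-invariant (as $A$ normalises $H$ and $K$), and $L\le M$ since $K\le M\normal G$. As $K\subnormal G$ and $L\normal G$, Lemma~\ref{ap:4}(d) (with $\op{\fancyP}{K}=1$) gives $K\le L$ or $[K,L]=1$. If $[K,L]=1$ then $[H,K,K]=[L,K]=1$ and $[K,H,K]=[L,K]=1$, so the Three Subgroups Lemma applied to $K,H,K$, together with $K=[K,K]$, yields $[K,H]=1$ and we are done. Otherwise $K\le L\normal G$, so $M=\listgen{K^{G}}\le L$, and therefore $[H,K]=L=M$.

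\medskip
\noindent\textbf{The crux: ruling out $[H,K]=M$.} Here the hypotheses that $A$ is an $r$-group and $G$ a $K$-group come in. By coprime action $\cz{K}{A}\ne 1$, and in fact $\cz{K}{A}\not\le\zz{K}$, since $\cz{K}{A}\zz{K}/\zz{K}=\cz{K/\zz{K}}{A}\ne 1$; moreover, because $K/\zz{K}$ is $A$-simple and $K$ is perfect, $K=\listgen{\cz{K}{A}^{K}}$. Since $\cz{K}{A}$ normalises $H$ we have $[\cz{K}{A},H]\le H$, and as $\cz{K}{A}\le K\le M\normal G$ also $[\cz{K}{A},H]\le M$, whence $[\cz{K}{A},H]\le H\cap M$ and $\cz{K}{A}$ normalises $H\cap M$. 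The idea is now that $\cz{K}{A}$ sits ``diagonally'' across the components of $K$ and is non-central, so it cannot normalise $H\cap M$ if $H\cap M$ is a diagonal-type subgroup of the central product $M$: otherwise a nontrivial element of $\cz{K}{A}$ would be forced to centralise a whole component of $\br{M}=M/\zz{M}$, contrary to $\cz{K}{A}\not\le\zz{K}$. Hence $H\cap M$ must meet some $K_{i}$ non-centrally, and moving that component onto a component of $K$ by a suitable element of $H$ contradicts $H\cap K\le\zz{K}$, completing the proof.

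\medskip
\noindent The main obstacle is exactly this last step: the careful analysis of the diagonal configurations inside the central product $M$, and in particular the small cases in which a component lies in $\badfour$ and $\cz{K}{A}$ is abnormally small (so that ``$\cz{K}{A}$ generates and moves enough of $K$'' is not immediate), where one must invoke the classification — legitimate since $G$ is a $K$-group — or, more economically, the structural results on $A$-components in \cite{I} such as \ICiteTheoremKfour.
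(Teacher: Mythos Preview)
Your reduction to $G=\langle H,K\rangle$ and the dichotomy via Lemma~\ref{ap:4}(d) are sound, and the case $[K,[H,K]]=1$ is correctly handled by the Three Subgroups Lemma. The genuine gap --- which you yourself flag --- is the ``crux'' case $[H,K]=M$: the diagonal-configuration sketch is not a proof. In particular, the assertion that $H\cap M$ must meet some $K_i$ non-centrally does not follow merely from $\cz{K}{A}$ normalising $H\cap M$; nothing you have written excludes $H\cap M\le\zz{M}$, in which case $\cz{K}{A}$ normalises it trivially and no contradiction appears. Your closing remark that the $\badfour$ cases would need separate treatment confirms that the argument, as it stands, is incomplete rather than merely compressed.

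The paper's proof bypasses the dichotomy entirely and is shorter. Work in $\br{G}=G/\zz{\layerr{G}}$. Since $\cz{K}{A}$ normalises $H$ and $K\normal\layerr{G}$, one gets $[H,\cz{K}{A}]\le H\cap\layerr{G}$ and hence $[H,\cz{K}{A},\cz{K}{A}]\le H\cap K\le\zz{K}$; the Three Subgroups Lemma then yields $[\br{H},\cz{\br{K}}{A}']=1$. By \ICiteTheoremKfourA, $\cz{\br{K}}{A}'\ne 1$, and the components of $\br{G}$ onto which it projects nontrivially are exactly the components of $\br{K}$; since $\br{H}$ centralises $\cz{\br{K}}{A}'$ it must permute this set into itself, so $H$ normalises $K$. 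Now $[H,\cz{K}{A}]\le H\cap K\le\zz{K}$ directly, whence $[\br{H},\cz{\br{K}}{A}]=1$, and \ICiteTheoremKfourC\ gives $[\br{H},\br{K}]=1$; one more application of Three Subgroups with $K=K'$ finishes. Your instinct that \ICiteTheoremKfour\ is the right input is correct --- but the efficient way to deploy it is through the iterated commutator $[H,\cz{K}{A},\cz{K}{A}]$, not through an analysis of diagonal subgroups of $M$.
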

\begin{proof}
    Set $\br{G} = G/\zz{\layerr{G}}$.
    We have $[H,\cz{K}{A}] \leq H \cap \layerr{G}$ so as $K \normal \layerr{G}$
    it follows that $[H,\cz{K}{A},\cz{K}{A}] \leq H \cap K \leq \zz{K}$.
    Note that $\cz{\br{K}}{A} = \br{\cz{K}{A}}$ by Coprime Action.
    Then $[\br{H},\cz{\br{K}}{A},\cz{\br{K}}{A}] = 1$.
    The Three subgroups Lemma implies that $[\br{H},\cz{\br{K}}{A}'] = 1$.
    Then $H$ permutes the components of $\br{G}$ onto which $\cz{\br{K}}{A}'$
    projects nontrivially.
    By \ICiteTheoremKfourA,
    $\cz{\br{K}}{A}' \not= 1$ so these components are precisely
    the components of $\br{K}$.
    We deduce that $\br{H}$ normalizes $\br{K}$ and then
    that $H$ normalizes $K$.
    Then $[H,\cz{K}{A}] \leq H \cap K \leq \zz{K}$
    so $[\br{H},\cz{\br{K}}{A}] = 1$.
    \ICiteTheoremKfourC\ implies that $[\br{H},\br{K}] = 1$.
    Since $K$ is perfect,
    it follows from the Three Subgroups Lemma that $[H,K] = 1$.
\end{proof}

\begin{Lemma}\label{prel:2}
    Let $\fancyP$ be a group theoretic property that satisfies:
    \begin{enumerate}
        \item[(a)]  $\fancyP$ is subgroup and quotient closed.

        \item[(b)]  If $G/M$ and $G/N$ are $\fancyP$-groups
                    then so is $G/M \cap N$.
    \end{enumerate}
    Suppose the group $A$ acts coprimely on the group $G$,
    that $P$ is an $A$-invariant subgroup of $G$
    and that $K \in \comp{A}{G}$.
    Assume that \[
        \mbox{$\cz{K}{A} \leq \n{G}{P}$ %
        and $[P,\cz{K}{A}]$ is a $\fancyP$-group.}
    \]
    Then \[
        \mbox{$P \leq \n{G}{K}$ or $\cz{K/\zz{K}}{A}$ is a $\fancyP$-group.}
    \]
\end{Lemma}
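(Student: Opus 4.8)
The plan is to argue by contradiction: assume that $P \not\leq \n{G}{K}$ and that $\cz{K/\zz{K}}{A}$ is \emph{not} a $\fancyP$-group, and derive an impossibility. The starting point is the hypothesis that $\cz{K}{A}$ normalizes $P$, which by symmetry of the commutator means $\cz{K}{A}$ normalizes $[P,\cz{K}{A}]$; combined with the fact that $[P,\cz{K}{A}]$ is a $\fancyP$-group, this should let me locate $[P,\cz{K}{A}]$ inside $\op{\fancyP}{\text{(something $A$-invariant containing it)}}$. The key tension to exploit is that $K \in \comp{A}{G}$ is $A$-quasisimple and perfect, so $\cz{K}{A}$ is ``large'' in $K$ — in particular $\cz{K}{A}' \not= 1$ by \ICiteTheoremKfourA\ (after passing to $K/\zz K$), and $\cz{K/\zz K}{A}$ being a genuine (non-$\fancyP$) group means it has a nontrivial perfect-type piece that cannot be swallowed by an $\op{\fancyP}{}$.

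First I would reduce to the case $\zz K = 1$ by passing to $\br G = G/\zz{\layerr G}$ or at least $G/\zz K$-type quotients, using Coprime Action to ensure $\cz{\br K}{A} = \br{\cz K A}$ and that $\br K$ is still an $A$-component; the hypotheses on $P$ transfer to $\br P$ since $\fancyP$ is quotient closed. Then $\cz{K}{A}$ acts on $P$ and $[P,\cz K A] \normal \langle P, \cz K A\rangle$ is a $\fancyP$-group, and I want to show $P$ normalizes $K$. The mechanism mirrors Lemma~\ref{prel:1}: because $[P,\cz K A]$ is a normal $\fancyP$-subgroup of $P[P,\cz K A]$ and $\cz K A$ acts on it, I consider $[\cz K A, P]$ and push it into $\op{\fancyP}{}$ of an appropriate $A\cz K A$-invariant subgroup. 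The point is that $\cz K A'$ (or a suitable perfect section of it) cannot normalize a nontrivial $\fancyP$-group nontrivially if $\cz{K/\zz K}{A}$ is not a $\fancyP$-group — more precisely, if $P$ did not normalize $K$, then $P$ would permute the $A$-components of $\br G$ on which $\cz{\br K}{A}'$ projects nontrivially (these being exactly the components of $\br K$, by \ICiteTheoremKfourA), and $[P,\cz K A]$ being a $\fancyP$-group constrains this permutation action; I would show it forces $\cz{\br K}{A}'$, hence $\cz{K/\zz K}{A}$, to be a $\fancyP$-group (using that $\fancyP$ is subgroup and quotient closed and closed under the relevant intersections/products via Hypothesis items), contradicting our assumption. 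Hence $P \leq \n G K$.

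Once $P$ normalizes $K$, we are essentially done: $[P, \cz K A] \leq K$ (since $\cz K A \leq K$ and $P$ normalizes $K$), so $[P,\cz K A]$ is a normal $\fancyP$-subgroup — wait, rather it is a $\fancyP$-subgroup of $K$ normalized by $\cz K A$ and by $A$ — and one shows $[P,\cz K A] \leq \op{\fancyP}{K}$, whence modulo $\op{\fancyP}{K}\zz K$ the group $\cz K A$ centralizes its image, so $[\cz{K/\zz K}{A}, \cz{K/\zz K}{A}]$ lands in the image of $\op{\fancyP}{K}$ via the Three Subgroups Lemma, forcing $\cz{K/\zz K}{A}$ to be a $\fancyP$-group (it is generated by its abelianization together with a piece inside $\op{\fancyP}{}$, and $\fancyP$ is extension-free only in the weaker sense of item~2, so I would instead argue $\cz{K/\zz K}{A}' \leq \op{\fancyP}{K/\zz K}$ and use that $\cz{K/\zz K}{A}/\cz{K/\zz K}{A}'$ together with Hypothesis-type closure gives the $\fancyP$ conclusion). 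Either branch of the dichotomy is reached, completing the proof.

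The main obstacle I anticipate is the permutation-action step: controlling how $P$ moves the components of $\br K$ when $[P,\cz K A]$ is only known to be a $\fancyP$-group, and extracting from that a $\fancyP$-conclusion about $\cz{\br K}{A}'$ itself rather than just about $[P,\cz K A]$. This is where the precise closure axioms on $\fancyP$ (subgroup, quotient, intersections of quotients) and the structural input \ICiteTheoremKfourA--\ICiteTheoremKfourC\ about $A$-components and their $\cz{}{A}$-sections must be combined carefully, much as in the proof of Lemma~\ref{prel:1}.
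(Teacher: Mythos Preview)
Your proposal identifies the right ingredients---the permutation action on components and the role of $C_K(A)$---but is missing the one construction that makes the dichotomy fall out cleanly, and the logical structure is tangled as a result.

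The paper does not argue by contradiction. It sets $M = \langle C_K(A)^{P} \rangle = [P,C_K(A)]\,C_K(A) \leq E(G)$. Since $K \trianglelefteq E(G)$ one has $M \cap K \trianglelefteq M$, and $M/(M\cap K)$ is a quotient of $[P,C_K(A)]$, hence a $\mathcal P$-group. This forces $O^{\mathcal P}(M) \leq M \cap K$. Crucially, $P$ normalizes $M$ and therefore normalizes its characteristic subgroup $N = O^{\mathcal P}(M)$. Passing to $\bar G = G/Z(E(G))$, the dichotomy is simply whether $\bar N$ is trivial: if $\bar N \ne 1$, then $\bar N$ is an $A$-invariant subgroup of $\bar K$, so the components of $\bar G$ onto which $\bar N$ projects nontrivially are exactly the components of $\bar K$, and $P$ (normalizing $\bar N$) must permute these---hence $P$ normalizes $K$. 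If $\bar N = 1$, then $N \leq Z(E(G))$, so $M/(M\cap Z(E(G)))$ is a $\mathcal P$-group; since $C_K(A) \leq M$ and $Z(K) = K\cap Z(E(G))$, the subgroup $C_K(A)/(C_K(A)\cap Z(K)) \cong C_{K/Z(K)}(A)$ is a $\mathcal P$-group.

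Your plan tries instead to transfer the $\mathcal P$-property of $[P,C_K(A)]$ directly to $C_{\bar K}(A)'$ via the permutation action, but there is no mechanism for that: the $\mathcal P$-property of a commutator does not by itself constrain how $P$ moves components. What is needed is a $P$-invariant subgroup that already sits inside $K$ (modulo center), and $O^{\mathcal P}(M)$ is precisely that object---this is the idea you are missing. Your ``once $P$ normalizes $K$'' paragraph is also logically superfluous: in your contradiction setup $P \leq N_G(K)$ is already one of the two desired conclusions, so there is nothing further to prove in that branch; the attempt to then derive the other conclusion from it indicates the dichotomy has not been set up correctly.
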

\begin{proof}
    Set $M = \listgen{\cz{K}{A}^{P}} = [P,\cz{K}{A}]\cz{K}{A} \leq \layerr{G}$.
    We have $M = [P,\cz{K}{A}](M \cap K)$ and $M \cap K \normal M$
    since $K \normal \layerr{G}$.
    Now $M/M \cap K$ is a $\fancyP$-group since it is isomorphic
    to a quotient of $[P,\cz{K}{A}]$.
    Thus \[
        \oupper{\fancyP}{M} \leq M \cap K.
    \]
    Since $P \leq \n{G}{M}$ we have $P \leq \n{G}{\oupper{\fancyP}{M}}$.

    Set $E = \layerr{G}$ and $\br{G} = G/\zz{\layerr{G}}$,
    so $\br{E}$ is the direct product of the components of $\br{G}$.
    Set $N = \oupper{\fancyP}{M}$.
    Suppose that $\br{N} \not= 1$.
    Now $\br{N}$ is $A$-invariant so $P$ permutes the components of
    $\br{G}$ onto which $\br{N}$ projects nontrivially.
    Since $\br{N} \leq \br{K}$ and both $\br{N}$ and $\br{K}$
    are $A$-invariant,
    these components are precisely the components of $\br{K}$.
    Then $P$ normalizes $\br{K}$ and hence $K$.

    Suppose that $\br{N} = 1$.
    Then $N \leq \zz{\layerr{G}}$ and so $M/M \cap \zz{\layerr{G}}$
    is a $\fancyP$-group.
    As $\cz{K}{A} \leq M$ and $\zz{K} = K \cap \zz{\layerr{G}}$
    it follows that $\cz{K}{A}/\cz{K}{A} \cap \zz{K}$
    is a $\fancyP$-group.
    Since $A$ acts coprimely on $G$,
    the quotient is isomorphic to $\cz{K/\zz{K}}{A}$,
    completing the proof.
\end{proof}
    
\section{$\mathcal P$-subgroups}\label{p}

\begin{Definition}\label{p:1}
    Let $A$ be a group that acts on the group $G$
    and let $\fancyP$ be a group theoretic property.
    Then \[
        \op{\fancyP}{G;A} = \gen{ P \leq G }%
        { \mbox{$\fancyP$ is an $A\cz{G}{A}$-invariant $\fancyP$-subgroup} }.
    \]
\end{Definition}
\noindent We are interested in situations where
$\op{\fancyP}{G;A}$ is a $\fancyP$-group,
in other words,
when does $G$ possess a unique maximal $A\cz{G}{A}$-invariant $\fancyP$-subgroup?
The goal of this section is to prove the following.

\begin{Theorem}\label{p:2}
    Let $\fancyP$ be a group theoretic property that is closed under
    subgroups, quotients and extensions.
    Let $A$ be an elementary abelian $r$-group for some prime $r$
    and assume that $A$ acts coprimely on the $K$-group $G$.
    Then $\op{\fancyP}{G;A}$ is a $\fancyP$-group.
\end{Theorem}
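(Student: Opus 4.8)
The plan is to argue by induction on $\card{G}$, reducing to the case where the various $A\cz{G}{A}$-invariant $\fancyP$-subgroups interact as tightly as possible with the components of $G$. Set $P = \op{\fancyP}{G;A}$; we must show $P$ is a $\fancyP$-group. If $\op{\fancyP}{G} \neq 1$, pass to $\br{G} = G/\op{\fancyP}{G}$: an $A\cz{G}{A}$-invariant $\fancyP$-subgroup of $G$ maps to an $A\cz{\br{G}}{A}$-invariant $\fancyP$-subgroup of $\br{G}$ (using Coprime Action to control $\cz{\br{G}}{A}$), so by induction $\br{P}$ lies in the $\fancyP$-group $\op{\fancyP}{\br{G};A}$; since $\fancyP$ is extension closed and $\op{\fancyP}{G}$ is a $\fancyP$-group, the preimage is a $\fancyP$-group and we are done. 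So assume $\op{\fancyP}{G} = 1$. Similarly, if $G$ has a proper nontrivial $A\cz{G}{A}$-invariant normal subgroup $N$, one would like to play off $N$ against $G/N$; the delicate point is that $\cz{G/N}{A} = \br{\cz{G}{A}}$ by Coprime Action, so an $A\cz{G}{A}$-invariant subgroup descends correctly, and induction applies to both $N$ and $G/N$ — modulo the usual care that a $\fancyP$-subgroup of $N$ and one of $G/N$ assemble to a $\fancyP$-subgroup of $G$ by extension closure.

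Thus we may assume $G$ is, in a suitable sense, ``nearly simple'' relative to $A\cz{G}{A}$: $\op{\fancyP}{G} = 1$, and the generalized Fitting-type structure is governed by the components, i.e.\ by $\layerr{G}$ and the $(A,\fancyP)$-components, with $\sol{G}$ and $\op{\fancyP}{G}$ both small. The heart of the matter is then: given two $A\cz{G}{A}$-invariant $\fancyP$-subgroups $P_1, P_2$, show $\listgen{P_1,P_2}$ is again a $\fancyP$-group. The tool for this is Lemma~\ref{prel:2}: with $K$ ranging over the $A$-components of $G$, if $\cz{K}{A} \leq \n{G}{P_i}$ and $[P_i,\cz{K}{A}]$ is a $\fancyP$-group (the latter because $[P_i,\cz{K}{A}] \leq P_i$ is a subgroup of a $\fancyP$-group, hence a $\fancyP$-group), then either $P_i$ normalizes $K$, or $\cz{K/\zz{K}}{A}$ is a $\fancyP$-group. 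The point is that $\cz{K}{A}$, being a subgroup of $\cz{G}{A}$, does normalize each $P_i$ by the defining property of $\op{\fancyP}{G;A}$; so the dichotomy applies, and in the first alternative $P_i$ normalizes every $K$ and hence normalizes $\layerr{G}$, forcing $\listgen{P_1,P_2}$ to act on $G/\sol{G}$ in a way we can control by induction on the subnormal subgroup $\layerr{G}$; in the second alternative the structure of $K$ is itself constrained (using $\ICiteTheoremKfour$ on $A$-components of $K$-groups), and one shows directly that $P_i \cap K$, and then $P_i$, lies inside a $\fancyP$-subgroup pinned down by $K$.

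The main obstacle I anticipate is the second alternative — handling those $A$-components $K$ for which $\cz{K/\zz{K}}{A}$ happens to be a $\fancyP$-group. Here one cannot simply quotient out $K$, because $P_i$ need not normalize $K$; instead one must argue that $P_i$ permutes the $A$-orbit of such components and that the subgroup of $G$ generated by those orbits, together with $P_i$, is still a $\fancyP$-group. This is exactly where Lemma~\ref{prel:1} should enter: it forces an $A\cz{K}{A}$-invariant subgroup meeting $K$ in $\zz{K}$ to centralize $K$, which (after replacing $P_i$ by $[P_i, \cz{K}{A}]$ or a suitable section) should let us conclude that the obstruction is confined to a bounded-size, $A$-invariant, $\fancyP$ piece. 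Once both alternatives are disposed of, $\listgen{P_1,P_2}$ is a $\fancyP$-group; since $G$ is finite, iterating over all $A\cz{G}{A}$-invariant $\fancyP$-subgroups (finitely many, generating $P$) yields that $P$ itself is a $\fancyP$-group, completing the induction.
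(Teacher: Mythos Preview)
Your outline has the right opening moves (pass to $\br{G}=G/\op{\fancyP}{G}$, assume $\op{\fancyP}{G}=1$, look at a minimal $A$-invariant normal subgroup $N$), but the reduction ``play $N$ off against $G/N$'' does not close. Knowing that $\op{\fancyP}{N;A}$ and $\op{\fancyP}{G/N;A}$ are $\fancyP$-groups does \emph{not} imply $\op{\fancyP}{G;A}$ is: for two $A\cz{G}{A}$-invariant $\fancyP$-subgroups $P_{1},P_{2}$, the intersection $\listgen{P_{1},P_{2}}\cap N$ can be strictly larger than $\listgen{P_{1}\cap N,\,P_{2}\cap N}$, so extension closure alone is not enough. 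The paper plugs exactly this hole with Lemma~\ref{p:6}: it proves (by a separate, nontrivial minimal-counterexample argument using Lemma~\ref{p:5} and \ICiteTheoremKone) that every $A\cz{N}{A}$-invariant $\fancyP$-subgroup of $G$ \emph{normalizes} $\op{\fancyP}{N;A}$. From this, $\op{\fancyP}{N;A}\normal \op{\fancyP}{G;A}=G$, hence $\op{\fancyP}{N;A}=1$; then Lemma~\ref{prel:1} forces each $P_i$ to centralize $N$, giving $N\leq \zz{G}$, a contradiction. Your sketch invokes Lemma~\ref{prel:1} too, but only after its hypothesis $P_i\cap K\leq \zz{K}$ has been secured --- and that is precisely what Lemma~\ref{p:6} delivers.

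The dichotomy you extract from Lemma~\ref{prel:2} does not finish either branch. In the first alternative, having each $P_i$ normalize every $A$-component $K$ still leaves you to control $\listgen{P_1,P_2}\cap K$ inside an $A$-quasisimple group; this is not ``induction on $\layerr{G}$'' but the genuine base case, and the paper handles it (when $N=G$, so $G$ is $A$-simple) by invoking the underdiagonal/overdiagonal machinery of \ICiteSectionAQS, specifically \ICiteLemmaAQSeightB. In the second alternative, $\cz{K/\zz{K}}{A}$ being a $\fancyP$-group does not make $K$ or $\listgen{K,P_i}$ a $\fancyP$-group, and your proposed use of Lemma~\ref{prel:1} needs $P_i\cap K\leq\zz{K}$, which you have not established. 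Finally, you omit the case where the minimal $A$-invariant normal subgroup $N$ is abelian: here there are no components at all, and the paper argues separately via \ICiteCoprimeActionSZ\ (Schur--Zassenhaus with $A$-conjugacy of complements). Without analogues of Lemma~\ref{p:6}, the $A$-simple endgame, and the abelian case, the argument is incomplete.
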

\noindent As an immediate consequence we have the following.
\begin{Corollary}\label{p:3}
    Let $r$ be a prime and $A$ an elementary abelian $r$-group
    that acts on the group $G$.
    Suppose that $\theta$ is an $A$-signalizer functor on $G$
    and that $\theta(a)$ is a $K$-group for all $a \in A\nonid$.

    Let $\fancyP$ be a group theoretic property that is closed under
    subgroups, quotients and extensions.
    Define $\theta_{\fancyP}$ by \[
        \theta_{\fancyP}(a) = \op{\fancyP}{\theta(a);A}
    \]
    for all $a \in A\nonid$.
    Then $\theta_{\fancyP}$ is an $A$-signalizer functor.
\end{Corollary}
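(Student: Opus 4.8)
The plan is to derive this directly from Theorem~\ref{p:2} and the balance condition defining a signalizer functor. First I would note that for each $a \in A\nonid$ the subgroup $\theta_{\fancyP}(a) = \op{\fancyP}{\theta(a);A}$ is $A$-invariant --- indeed $A\cz{\theta(a)}{A}$-invariant, being generated by $A\cz{\theta(a)}{A}$-invariant subgroups --- and lies in $\theta(a) \leq \cz{G}{a}$; and since $\theta(a)$ is a $K$-group on which $A$ acts coprimely, Theorem~\ref{p:2} tells us that $\theta_{\fancyP}(a)$ is in fact a $\fancyP$-group. It then remains only to verify the balance condition, and because $\theta_{\fancyP}(a) \leq \cz{G}{a}$ it is enough to show $\theta_{\fancyP}(a) \cap \cz{G}{b} \leq \theta_{\fancyP}(b)$ for every pair $a,b \in A\nonid$; the reverse inclusion, and hence the equality $\theta_{\fancyP}(a) \cap \cz{G}{b} = \theta_{\fancyP}(b) \cap \cz{G}{a}$, follows by interchanging $a$ and $b$.

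So fix $a,b \in A\nonid$ and set $D = \theta_{\fancyP}(a) \cap \cz{G}{b}$. Since $\fancyP$ is subgroup closed, $D$ is a $\fancyP$-group; it is $A$-invariant, as both $\theta_{\fancyP}(a)$ and $\cz{G}{b}$ are (the latter because $A$ is abelian); and since $\theta$ is a signalizer functor, $D \leq \theta(a) \cap \cz{G}{b} = \theta(b) \cap \cz{G}{a} \leq \theta(b)$. By the definition of $\op{\fancyP}{\theta(b);A}$ together with Theorem~\ref{p:2} it now suffices to show that $D$ is an $A\cz{\theta(b)}{A}$-invariant $\fancyP$-subgroup of $\theta(b)$, so all that is left is to check that $\cz{\theta(b)}{A}$ normalizes $D$. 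Here one uses the signalizer functor equation once more: $\cz{\theta(b)}{A} \leq \cz{G}{A} \leq \cz{G}{a}$, so $\cz{\theta(b)}{A} \leq \theta(b) \cap \cz{G}{a} = \theta(a) \cap \cz{G}{b}$; in particular $\cz{\theta(b)}{A} \leq \cz{\theta(a)}{A}$, which normalizes $\op{\fancyP}{\theta(a);A} = \theta_{\fancyP}(a)$, while $\cz{\theta(b)}{A}$ centralizes $b$ and so normalizes $\cz{G}{b}$. Hence $\cz{\theta(b)}{A}$ normalizes the intersection $D$, giving $D \leq \theta_{\fancyP}(b)$, as required.

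The substance of the argument is carried entirely by Theorem~\ref{p:2}: its function is to guarantee that each $\theta_{\fancyP}(a)$ really is a $\fancyP$-group, so that the $\fancyP$-property passes to the intersections $D$ that appear. Granting that, the only delicate point is the $\cz{\theta(b)}{A}$-invariance of $D$, which reduces to the containment $\cz{\theta(b)}{A} \leq \cz{\theta(a)}{A}$; this in turn is immediate from $\cz{G}{A} \leq \cz{G}{a}$ together with the signalizer functor identity. No further structure theory is needed, so I expect no real obstacle beyond correctly bookkeeping these invariance properties.
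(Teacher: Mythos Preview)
Your argument is correct and is precisely the routine verification that the paper leaves implicit: the paper states the corollary as an ``immediate consequence'' of Theorem~\ref{p:2} without spelling out the balance condition, and what you have written is exactly the expected unpacking. The only thing worth noting is that your observation $\cz{\theta(b)}{A} \leq \cz{\theta(a)}{A}$ in fact gives $\cz{\theta(b)}{A} = \cz{\theta(a)}{A}$ by symmetry (this common subgroup is the $\theta(A)$ appearing elsewhere in the paper), which makes the invariance check slightly more transparent, but your version is already complete as it stands.
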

\noindent This generalizes a result of McBride \cite[Lemma~3.1]{McB1},
who proves it in the case $\fancyP = \mbox{``is solvable''}$
and $\theta$ is near solvable.

Throughout the remainder of this section,
we assume the hypotheses of Theorem~\ref{p:2}.

\begin{Lemma}\label{p:4}
    Assume that $N$ is an $A\cz{G}{A}$-invariant subgroup of $G$.
    \begin{enumerate}
        \item[(a)]  $\cz{G}{A}$ normalizes $\op{\fancyP}{N;A}$.

        \item[(b)]  Suppose that $\op{\fancyP}{G;A}$ and $\op{\fancyP}{N;A}$
                    are $\fancyP$-groups.
                    Then \[
                        \op{\fancyP}{N;A} = \op{\fancyP}{G;A} \cap N.
                    \]
                    If in addition,
                    $N \normal G$ then $\op{\fancyP}{N;A} \normal \op{\fancyP}{G;A}$.

        \item[(c)]  Suppose $N = N_{1} \times\cdots\times N_{m}$ with
                    each $N_{i}$ being $A$-invariant.
                    Then \[
                        \op{\fancyP}{N;A} = %
                        \op{\fancyP}{N_{1};A} \times\cdots\times \op{\fancyP}{N_{m};A}.
                    \]
    \end{enumerate}
\end{Lemma}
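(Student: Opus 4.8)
The three parts are mostly routine consequences of the definitions together with the earlier lemmas, so my proof proposal treats them in sequence, each one feeding into the next.

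For (a), the point is simply that if $P \leq N$ is an $A\cz{N}{A}$-invariant $\fancyP$-subgroup and $c \in \cz{G}{A}$, then $P^{c} \leq N^{c} = N$ (since $\cz{G}{A}$ normalizes $N$, as $N$ is $A\cz{G}{A}$-invariant) and $P^{c}$ is again $A$-invariant; moreover $\cz{N}{A}^{c} = \cz{N^c}{A} = \cz{N}{A}$ because $c$ centralizes $A$, so $P^c$ is $A\cz{N}{A}$-invariant. Hence conjugation by $c$ permutes the generating subgroups of $\op{\fancyP}{N;A}$, so $\cz{G}{A}$ normalizes $\op{\fancyP}{N;A}$. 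This does not use Theorem~\ref{p:2} and is purely formal.

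For (b), first observe that by (a) the subgroup $\op{\fancyP}{N;A}$ is $A\cz{G}{A}$-invariant (it is $A$-invariant by construction, normalized by $\cz{G}{A}$ by (a), and normalized by $N$ hence by $A\cz{N}{A}$ trivially — but what we need is $A\cz{G}{A}$-invariance, which (a) supplies). Since we are assuming $\op{\fancyP}{N;A}$ is a $\fancyP$-group, it is one of the subgroups defining $\op{\fancyP}{G;A}$, so $\op{\fancyP}{N;A} \leq \op{\fancyP}{G;A} \cap N$. Conversely, $\op{\fancyP}{G;A} \cap N$ is $A$-invariant; it is a $\fancyP$-group because $\op{\fancyP}{G;A}$ is one and $\fancyP$ is subgroup closed; and it is normalized by $\cz{N}{A} \leq \cz{G}{A}$, which normalizes $\op{\fancyP}{G;A}$ (again by (a), with $G$ in the role of $N$) and normalizes $N$. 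Hence $\op{\fancyP}{G;A} \cap N$ is an $A\cz{N}{A}$-invariant $\fancyP$-subgroup of $N$, so it is contained in $\op{\fancyP}{N;A}$, giving equality. For the final clause, if $N \normal G$ then $\op{\fancyP}{G;A}$ normalizes $N$, hence normalizes $\op{\fancyP}{G;A}\cap N = \op{\fancyP}{N;A}$. The one delicate point here is that (b) is stated under the hypothesis that the relevant $\op{\fancyP}{-;A}$ are $\fancyP$-groups, which in the intended application will come from Theorem~\ref{p:2} applied to $G$ and to $N$ (the latter being a $K$-group as a subgroup of one); so there is no circularity as long as (b) is invoked only after Theorem~\ref{p:2} is available, or is used as an intermediate step inside its induction with the inductive hypothesis supplying the $\fancyP$-group property for proper subgroups.

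For (c), set $R = \op{\fancyP}{N_1;A}\times\cdots\times\op{\fancyP}{N_m;A}$. Each factor is an $A$-invariant $\fancyP$-subgroup of $N_i$, hence of $N$; a direct product of $\fancyP$-groups is a $\fancyP$-group since $\fancyP$ is closed under extensions (build it up one factor at a time); and $R$ is normalized by $\cz{N}{A}$ — here I would use that $\cz{N}{A} = \cz{N_1}{A}\times\cdots\times\cz{N_m}{A}$ by Coprime Action, together with part (a) giving that each $\cz{N_i}{A}$ normalizes each $\op{\fancyP}{N_j;A}$ (it centralizes the factors $j\neq i$ and normalizes its own factor). So $R$ is an $A\cz{N}{A}$-invariant $\fancyP$-subgroup, whence $R \leq \op{\fancyP}{N;A}$. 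For the reverse inclusion, let $\pi_i\colon N \to N_i$ be the projection; it is an $A$-map. If $P \leq N$ is $A\cz{N}{A}$-invariant and a $\fancyP$-group, then $\pi_i(P)$ is an $A$-invariant $\fancyP$-subgroup of $N_i$ (quotient closure), and it is normalized by $\pi_i(\cz{N}{A}) = \cz{N_i}{A}$; so $\pi_i(P) \leq \op{\fancyP}{N_i;A}$, and therefore $P \leq \pi_1(P)\times\cdots\times\pi_m(P) \leq R$. Applying this to all the generators $P$ of $\op{\fancyP}{N;A}$ yields $\op{\fancyP}{N;A}\leq R$, completing the proof. The main obstacle, such as it is, is bookkeeping: making sure the invariance conditions ($A$- versus $A\cz{-}{A}$-invariance) are tracked correctly through subgroups, intersections, projections, and the Coprime Action decomposition of fixed points of a direct product — none of it deep, but each claim of invariance needs the right ingredient.
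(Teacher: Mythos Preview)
Your arguments for (a) and (b) are correct and match the paper's (much terser) proof: $\cz{G}{A}$ normalizes $A\cz{N}{A}$ and hence permutes the $A\cz{N}{A}$-invariant $\fancyP$-subgroups of $N$; and for (b), each side is an appropriate invariant $\fancyP$-subgroup of the other's ambient group.

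For (c), your reverse inclusion (projecting a generator $P$ via the $\pi_i$) is fine. But your forward inclusion has a gap. You assert that ``each factor is an $A$-invariant $\fancyP$-subgroup of $N_i$'', i.e.\ that $\op{\fancyP}{N_i;A}$ is itself a $\fancyP$-group, and then use extension closure to conclude that $R$ is a $\fancyP$-group. That claim is exactly the conclusion of Theorem~\ref{p:2} applied to $N_i$, and Lemma~\ref{p:4}(c) is invoked in the \emph{proof} of Theorem~\ref{p:2} (via Lemmas~\ref{p:5} and \ref{p:6}); note that part (c), unlike part (b), carries no hypothesis that any $\op{\fancyP}{-;A}$ is a $\fancyP$-group. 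So as written your argument is circular.

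The fix is the one the paper has in mind when it writes ``Trivial'': argue at the level of generators rather than with $R$ as a whole. If $Q \leq N_i$ is an $A\cz{N_i}{A}$-invariant $\fancyP$-subgroup, then $Q$ is $A$-invariant, a $\fancyP$-group, and normalized by $\cz{N}{A} = \cz{N_1}{A}\times\cdots\times\cz{N_m}{A}$ (the factors with $j \neq i$ centralize $N_i$); hence $Q \leq \op{\fancyP}{N;A}$. This gives $\op{\fancyP}{N_i;A} \leq \op{\fancyP}{N;A}$ for each $i$ without ever needing $\op{\fancyP}{N_i;A}$ to be a $\fancyP$-group, and the product containment $R \leq \op{\fancyP}{N;A}$ follows. (Incidentally, the decomposition $\cz{N}{A} = \prod_i \cz{N_i}{A}$ is immediate from the direct product structure with $A$-invariant factors; you do not need Coprime Action there.)
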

\begin{proof}
    (a). Since $\cz{G}{A}$ normalizes $A\cz{N}{A}$,
    it permutes the $A\cz{N}{A}$-invariant subgroups of $N$.

    (b). By (a),
    $\op{\fancyP}{N;A} \leq \op{\fancyP}{G;A} \cap N$.
    Moreover, $\op{\fancyP}{G;A} \cap N$ is an $A\cz{N}{A}$-invariant
    $\fancyP$-subgroup of $N$,
    proving the reverse inclusion.

    (c). Trivial.
\end{proof}

\begin{Lemma}\label{p:5}
    Suppose that $N$ is an $A$-invariant normal subgroup of $G$
    and that \[
        N = N_{1} \times\cdots\times N_{m}
    \]
    with each $N_{i}$ being simple.
    For each $i$,
    let $\pi_{i}:N \longrightarrow N_{i}$
    be the projection map.
    Suppose also that $B$ is a subgroup of $A$ that normalizes
    but does not centralize each $N_{i}$.
    \begin{enumerate}
        \item[(a)]  $\cz{N}{A}\pi_{i} = \cz{N_{i}}{B}$ for each $i$.

        \item[(b)]  $\op{\fancyP}{N;A} = \op{\fancyP}{N;B}$.

        \item[(c)]  If $X$ is an $A\cz{N}{A}$-invariant subgroup of $G$
                    that normalizes each $N_{i}$ then \[
                        [X,\cz{N}{B}] \leq (X \cap N)\pi_{1} \times\cdots\times (X \cap N)\pi_{m}.
                    \]
    \end{enumerate}
\end{Lemma}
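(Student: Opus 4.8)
\textbf{Proof proposal for Lemma~\ref{p:5}.}

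The plan is to treat the three parts in order, using the coprime action machinery and the product decomposition throughout. For part~(a), the key point is that $A$ acts on the set $\listset{N_{1},\ldots,N_{m}}$, and since $B \leq A$ normalizes each $N_{i}$, the quotient $A/B$ permutes the factors. Because the action is coprime, $\cz{N}{A}$ is computed ``layer by layer'': by Coprime Action the fixed points of $A$ on the direct product $N$ are obtained by first taking $B$-fixed points, giving $\cz{N_{1}}{B} \times\cdots\times \cz{N_{m}}{B}$, and then taking the fixed points of a transversal of $B$ in $A$ acting by permuting (and isomorphically identifying) the factors within each $A$-orbit. Projecting to a single coordinate $N_{i}$, every element of $\cz{N}{A}$ therefore has $i$-th coordinate in $\cz{N_{i}}{B}$, giving $\cz{N}{A}\pi_{i} \leq \cz{N_{i}}{B}$; conversely, given $x \in \cz{N_{i}}{B}$ one builds an $A$-fixed element of $N$ by spreading $x$ across the $A$-orbit of $N_{i}$ using the identifications, which projects onto $x$. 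This is the routine part.

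For part~(b), the inclusion $\op{\fancyP}{N;A} \leq \op{\fancyP}{N;B}$ is trivial once one observes that $B \leq A$ forces $A\cz{N}{A} \leq B\cz{N}{A}$... no: rather, an $A\cz{N}{A}$-invariant $\fancyP$-subgroup need not be $B\cz{N}{B}$-invariant, so one must be slightly careful. The clean route is to show the two operators $A\cz{N}{A}$ and $B\cz{N}{B}$ generate the same group of automorphisms on $N$, or at least normalize the same subgroups. By part~(a), $\cz{N}{A}$ and $\cz{N}{B} = \cz{N_{1}}{B}\times\cdots\times\cz{N_{m}}{B}$ are related: $\cz{N}{A} \leq \cz{N}{B}$ and $\cz{N}{B}$ is $A$-invariant (since $A$ permutes the factors and normalizes each, it preserves $\prod \cz{N_{i}}{B}$). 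So $A\cz{N}{B}$ is a group, and I would argue $\op{\fancyP}{N;A}$, being $A\cz{N}{A}$-invariant and contained in $N$, is normalized by $A$ and by $\cz{N}{A}$; one then promotes this to $\cz{N}{B}$-invariance by a Frattini-type argument or directly, using that $\cz{N}{B} = \cz{N}{A}[\cz{N}{B},?]$... The honest key step: since the action is coprime and $B$ is normal in $A$ modulo the permutation action, apply \ICiteCoprimeActionProd\ or the relevant part of Coprime Action to get $\cz{N}{B} = \cz{N}{A}\cdot\prod[\cz{N_{i}}{B}, \text{orbit transversal}]$, but the commutator terms still lie in the $N_{i}$'s and are handled via part~(a). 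The upshot is that an $A\cz{N}{A}$-invariant subgroup of $N$ is automatically $A\cz{N}{B}$-invariant and conversely $B\cz{N}{B}$-invariant, giving equality of the two $\op{\fancyP}{\cdot}$ subgroups. I expect \textbf{this mutual-invariance step to be the main obstacle}: the two control groups are genuinely different as abstract groups, and the argument must exploit both coprimality and the product structure simultaneously.

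For part~(c), let $X$ be $A\cz{N}{A}$-invariant and normalize each $N_{i}$. Then $X$ normalizes $X \cap N$ and each projection $(X \cap N)\pi_{i}$ makes sense as a subgroup of $N_{i}$, and $D := (X\cap N)\pi_{1}\times\cdots\times(X\cap N)\pi_{m}$ is an $A$-invariant normal subgroup of $N$ containing $X \cap N$. The goal is $[X,\cz{N}{B}] \leq D$. First, $[X,\cz{N}{B}] \leq [X,N] \leq N$ because $N \normal G$, so $[X,\cz{N}{B}]$ lies in $N$ and it suffices to check its projection to each $N_{i}$ lies in $(X\cap N)\pi_{i}$. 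Fix $i$. Since $X$ normalizes $N_{i}$ and $\cz{N}{B}\pi_{i} = \cz{N_{i}}{B}$ by part~(a), applying $\pi_{i}$ to a commutator $[x,c]$ with $x \in X$, $c \in \cz{N}{B}$ gives an element of $[X\pi_{i}, \cz{N_{i}}{B}]$ inside $N_{i}$ — but I want it inside $(X\cap N)\pi_{i}$. The mechanism: work in $N_{i}\langle X\rangle$; the subgroup $\cz{N_{i}}{B}$ is contained in $\cz{N}{B} \leq \cz{G}{?}$... the cleanest argument is that $X\cap N \normal X$ and $X/(X\cap N)$ acts on $N$; modulo $D$, the induced action of $X$ on $N/D$ centralizes the image of $\cz{N}{B}$ because, coordinate-wise, $\cz{N_{i}}{B}$ maps into $N_{i}/(X\cap N)\pi_{i}$ and the commutator $[x,c]\pi_{i} \equiv [x\pi_{i}, c\pi_{i}]$ modulo lower terms lands in $(X\cap N)\pi_{i}$ by a direct expansion $[x,c] = x^{-1}x^{c} \in x^{-1}(X\cap N)x \cdot (\text{stuff in }\cz{N}{B})$ — here one uses that $x^{c} \in X$ iff $[x,c]\in X$, and $[x,c]\in N$ always, so $[x,c]\in X\cap N$ and hence $[x,c]\pi_{i}\in(X\cap N)\pi_{i}$. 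That last observation is in fact the whole of~(c): $[X,\cz{N}{B}] \leq [X,N] \leq N$ and $[X,\cz{N}{B}] \leq X$ since $\cz{N}{B}$ normalizes $X$ (as $\cz{N}{B}$ is $A$-invariant and... here again one needs $\cz{N}{B}$ to normalize $X$, which follows because $X$ is $A\cz{N}{A}$-invariant and $\cz{N}{B} \leq A\cz{N}{A}\cdot$-closure via part~(b)'s invariance discussion). So $[X,\cz{N}{B}] \leq X\cap N \leq D$ and we are done — with the caveat that the hidden work is once more showing $\cz{N}{B}$ normalizes $X$, i.e. the same obstacle as in~(b).
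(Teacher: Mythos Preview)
Your proposal has two genuine gaps, both stemming from the same missed idea.

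\textbf{Part~(a).} Your ``layer by layer'' description implicitly assumes that once you have passed to $\cz{N}{B}=\prod_i\cz{N_i}{B}$, the remaining action of $A$ is purely by permuting the factors. This is not automatic: an element $a\in \n{A}{N_i}\setminus B$ could in principle act nontrivially on $N_i$ and hence on $\cz{N_i}{B}$, so that $\cz{N}{A}\pi_i$ would be $\cz{N_i}{\n{A}{N_i}}$, strictly smaller than $\cz{N_i}{B}$. The paper closes this by invoking the $K$-group hypothesis: by \ICiteTheoremKone\ the Sylow $r$-subgroups of $\aut{N_i}$ are cyclic, so the image of the elementary abelian group $\n{A}{N_i}$ in $\aut{N_i}$ has order $r$ and is already covered by the nontrivial image of $B$. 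Thus $\n{A}{N_i}=\cz{A}{N_i}B$ and $\cz{N_i}{\n{A}{N_i}}=\cz{N_i}{B}$. You never use the $K$-group hypothesis, and without it~(a) is false.

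\textbf{Part~(c), and hence~(b).} You correctly isolate the obstacle: your argument for~(c) collapses to ``$[X,\cz{N}{B}]\le X\cap N$'', which needs $\cz{N}{B}$ to normalize $X$. But $X$ is only assumed $A\cz{N}{A}$-invariant, and $\cz{N}{B}$ is in general strictly larger than $\cz{N}{A}$; there is no Frattini-type trick available because no coprimeness between $\cz{N}{B}$ and anything relevant is in play. The paper avoids this entirely: take $c\in\cz{N}{A}$ (which \emph{does} normalize $X$), so $[x,c]\in X\cap N$. Since $x$ normalizes each $N_i$, the commutator factors as $[x,c]=\prod_i[x,c\pi_i]$ with $[x,c\pi_i]\in N_i$, whence $[x,c\pi_i]=[x,c]\pi_i\in(X\cap N)\pi_i$. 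Now part~(a) says $c\pi_i$ ranges over all of $\cz{N_i}{B}$ as $c$ ranges over $\cz{N}{A}$, giving $[x,\cz{N_i}{B}]\le(X\cap N)\pi_i$; since $\cz{N}{B}=\prod_i\cz{N_i}{B}$ the claim follows. The point is that one never shows $\cz{N}{B}$ normalizes $X$; one transfers the computation to $\cz{N}{A}$ via the projection identity from~(a).

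For~(b) the paper likewise works coordinate-wise rather than through mutual invariance: a $B\cz{N_i}{B}$-invariant $\fancyP$-subgroup of a single $N_i$ is spread by $A$ to an $A\cz{N}{A}$-invariant $\fancyP$-subgroup, and conversely an $A\cz{N}{A}$-invariant $\fancyP$-subgroup is projected to each $N_i$ using~(a). Your attempted route of proving the two control groups normalize the same subgroups is not how the paper proceeds and, as you suspected, does not go through directly.
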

\begin{proof}
    Note that $A$ permutes $\listset{N_{1},\ldots,N_{m}}$
    since this is the set of components of $N$.
    For each $i$,
    set $A_{i} = \n{A}{N_{i}}$.
    Now $N_{i}$ is a simple $K$-group so \ICiteTheoremKone\
    implies that the Sylow $r$-subgroups of $\aut{K_{i}}$ are cyclic.
    Hence $A_{i} = \cz{A}{N_{i}}B$.
    Using \ICiteLemmaPsix\
    we have $\cz{N}{A}\pi_{i} = \cz{N_{i}}{A_{i}} = \cz{N_{i}}{B}$
    so (a) follows.

    (b). Choose $1 \leq i \leq m$,
    let $X$ be a $B\cz{N_{i}}{B}$-invariant $\fancyP$-subgroup of $N_{i}$
    and set $Y = \listgen{X^{A}}$.
    Since $A_{i} = \n{A}{N_{i}}$ it follows that $Y$ is the direct
    product of $\card{A:A_{i}}$ copies of $X$.
    Then $Y$ is an $A$-invariant $\fancyP$-subgroup.
    Now $\cz{N}{B} = \cz{N_{1}}{B} \times\cdots\times \cz{N_{m}}{B}$.
    It follows that $Y$ is $\cz{N}{B}$-invariant.
    Now $\cz{N}{A} \leq \cz{N}{B}$ whence $Y \leq \op{\fancyP}{N;A}$.
    Using Lemma~\ref{p:4}(c),
    with $B$ in place of $A$,
    we deduce that $\op{\fancyP}{N;B} \leq \op{\fancyP}{N;A}$.

    To prove the opposite containment,
    suppose that $Z$ is an $A\cz{N}{A}$-invariant $\fancyP$-subgroup of $N$.
    From (a) it follows that each $Z\pi_{i}$ is a
    $B\cz{N_{i}}{B}$-invariant of $N_{i}$,
    whence $Z \leq \op{\fancyP}{N_{1};B} \times\cdots\times \op{\fancyP}{N_{m};B}$.
    Another application of Lemma~\ref{p:4}(c)
    implies $Z \leq \op{\fancyP}{N;B}$,
    completing the proof.

    (c). Let $x \in X$ and $c \in \cz{N}{A}$.
    Then $c = (c\pi_{1})\cdots(c\pi_{m})$ and,
    as $x$ normalizes each $N_{i}$,
    it follows that $[x,c\pi_{i}] \in N_{i}$.
    Hence \[
        [x,c] = [x,c\pi_{1}] \cdots [x,c\pi_{m}].
    \]
    Now $[x,c] \in X \cap N$ so it follows that \[
        [x,c]\pi_{i} = [x,c\pi_{i}].
    \]
    In particular, $[x,c\pi_{i}] \in (X \cap N)\pi_{i}$.
    Since $\cz{N}{A}\pi_{i} = \cz{N_{i}}{B}$ we have
    $[x,\cz{N_{i}}{B}] \leq (X \cap N)\pi_{i}$.
    Now $\cz{N}{B} = \cz{N_{1}}{B} \times\cdots\times \cz{N_{m}}{B}$
    so the result follows.
\end{proof}

\begin{Lemma}\label{p:6}
    Suppose that $N$ is an $A$-invariant normal subgroup of $G$
    that is the direct product of nonabelian simple groups
    and that $X$ is an $A\cz{N}{A}$-invariant subgroup of $G$.
    Assume that $X$ and $\op{\fancyP}{N;A}$ are $\fancyP$-groups.
    Then $X \leq \n{G}{\op{\fancyP}{N;A}}$.
\end{Lemma}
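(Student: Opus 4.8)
Write $P=\op{\fancyP}{N;A}$; we must show $X\le\n{G}{P}$. The plan is to prove that $\listgen{P^{X}}$ is a $\fancyP$-group. Granting this, note first that $\listgen{P^{X}}\le N$, since $P\le N\normal G$ and so $X$ normalises $N$. Next, $\listgen{P^{X}}$ is $A\cz{N}{A}$-invariant: $A$ normalises both $X$ and $P$, and $\cz{N}{A}$ normalises both $X$ (by hypothesis) and $P$ (by the definition of $P$), so whenever $g$ lies in $A$ or in $\cz{N}{A}$ we have $(P^{x})^{g}=(P^{g})^{x^{g}}=P^{x^{g}}$ with $x^{g}\in X$, whence $\listgen{P^{X}}^{g}=\listgen{P^{X}}$. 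Thus $\listgen{P^{X}}$ is an $A\cz{N}{A}$-invariant $\fancyP$-subgroup of $N$, so $\listgen{P^{X}}\le\op{\fancyP}{N;A}=P$, forcing $\listgen{P^{X}}=P$ and $X\le\n{G}{P}$.

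To prove $\listgen{P^{X}}$ is a $\fancyP$-group it suffices to show that the conjugates $\listset{P^{x}:x\in X}$ pairwise normalise one another: then each $P^{x}$ is normal in $\listgen{P^{X}}$, so $\listgen{P^{X}}=\op{\fancyP}{\listgen{P^{X}}}$ is a $\fancyP$-group. Replacing $x$ by $x^{-1}$, it is enough to prove that $P^{x}$ normalises $P$ for every $x\in X$. Now let $K_{1},\ldots,K_{s}$ be the $A$-components of $G$ contained in $N$, so $N=K_{1}\times\cdots\times K_{s}$, each $K_{l}$ is the product of an $A$-orbit of simple direct factors of $N$, and $A$ normalises each $K_{l}$. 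By Lemma~\ref{p:4}(c), $P=\op{\fancyP}{K_{1};A}\times\cdots\times\op{\fancyP}{K_{s};A}$. Let $\pi_{l}\colon N\to K_{l}$ be the projection. Since $X$ normalises $N$ and each $K_{l}\normal N$, we have $P^{x}\le N$ and $[P^{x},\op{\fancyP}{K_{l};A}]=[\pi_{l}(P^{x}),\op{\fancyP}{K_{l};A}]\le K_{l}$; hence $[P^{x},P]\le P$, and so $P^{x}\le\n{G}{P}$, will follow once we show for each $l$ that $\pi_{l}(P^{x})$ normalises $\op{\fancyP}{K_{l};A}$.

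Fix $l$ and apply Lemma~\ref{prel:2} with $K_{l}$ playing the role of its ``$K$'' and $X$ the role of its ``$P$''. The hypotheses hold: $\cz{K_{l}}{A}\le\cz{N}{A}\le\n{G}{X}$, and $[X,\cz{K_{l}}{A}]\le X$ is a $\fancyP$-group. Lemma~\ref{prel:2} therefore gives that \emph{either} $X$ normalises $K_{l}$, \emph{or} $\cz{K_{l}}{A}$ — which equals $\cz{K_{l}/\zz{K_{l}}}{A}$ since $\zz{K_{l}}=1$ — is a $\fancyP$-group, and hence, being $A\cz{N}{A}$-invariant, is contained in $P$.

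In the second alternative one exploits $\cz{K_{l}}{A}\le P$, bringing in Lemma~\ref{prel:1} to pin down the action of $X$ on the component $K_{l}$. In the first alternative $X$ normalises $K_{l}$ and so permutes its simple direct factors; here one uses Lemma~\ref{p:5}(a),(c) to analyse $P$ and $\pi_{l}(P^{x})$ inside $K_{l}$ factor by factor, the decisive point being that, by \ICiteTheoremKone, the groups of automorphisms that $A$ and its $X$-conjugates induce on each simple factor of $K_{l}$ are cyclic $r$-groups, which forces the corresponding $\fancyP$-radicals of that factor to normalise one another. I expect this first alternative — reconciling the $\fancyP$-radical of a simple $K$-group relative to two different coprime cyclic groups of automorphisms — to be the main obstacle, and it is where the structure theory of simple $K$-groups enters.
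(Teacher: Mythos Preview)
Your overall strategy --- show that $\listgen{P^{X}}$ is a $\fancyP$-group by proving the conjugates $P^{x}$ pairwise normalise one another --- is a legitimate alternative to the paper's minimal-counterexample argument, and your reduction to the $A$-components $K_{l}$ of $N$ together with the Lemma~\ref{prel:2} dichotomy is set up correctly. However, the last two paragraphs are sketches, not proofs, and both alternatives contain genuine gaps.

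In the second alternative ($\cz{K_{l}}{A}$ is a $\fancyP$-group) you invoke Lemma~\ref{prel:1}, but that lemma needs $X\cap K_{l}\le\zz{K_{l}}=1$, and you have not established this. Knowing $\cz{K_{l}}{A}\le P$ only gives $X\cap K_{l}\le\op{\fancyP}{K_{l};A}$, which is far from trivial in general.

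The first alternative is the real difficulty, as you yourself say. When $X$ normalises $K_{l}$, what you must show is that $\op{\fancyP}{K_{l};A}^{x}=\op{\fancyP}{K_{l};A^{x}}$ normalises $\op{\fancyP}{K_{l};A}$. Your proposed mechanism --- that the images of $A$ and $A^{x}$ in $\aut{N_{i}}$ are cyclic $r$-groups by \ICiteTheoremKone, ``which forces the corresponding $\fancyP$-radicals of that factor to normalise one another'' --- is not an argument. Two cyclic $r$-subgroups of $\aut{N_{i}}$ are conjugate, but nothing prevents their associated $\fancyP$-radicals from failing to normalise each other; cyclicity of Sylow $r$-subgroups alone does not give this. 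In effect you have reduced the lemma to itself in the case where $N$ is $A$-simple, and then stopped.

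The paper proceeds quite differently: a minimal counterexample with $G=XN$, then Coprime Action to arrange $X=[X,A]=\cz{X}{B}$ for some hyperplane $B$, and $AX$ transitive on the simple factors. Three successive claims exploit these reductions to force first that $B$ is semiregular on the factors, then $\card{\n{A}{N_{i}}}=r$, and finally $\card{A}=r$ with $N$ simple. Only at that point does the classification enter: one gets $X\cap N\not\le\cz{N}{A}$, hence $N\isomorphic\ltwo{2^{r}}$ or $\sz{2^{r}}$ with $\card{\out{N}}=r$, so $G=\cz{G}{N}\times N$ and a projection argument finishes. The inductive framework is what makes the endgame tractable; your direct approach lacks any analogous reductions, and it is not clear it can be completed without essentially reproducing them.
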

\begin{proof}
    Assume false and consider a counterexample with $\card{A}$ minimal
    and then $\card{G}+\card{N}+\card{X}$ minimal.
    Then $G = XN$.
    By Coprime Action,
    $X = \cz{X}{A}[X,A] = \gen{ \cz{X}{B} }{ B \in \hyp{A} }$.
    Since $\cz{X}{A}$ normalizes $\op{\fancyP}{N;A}$ it follows
    that $X = [X,A] = \cz{X}{B}$ for some $B \in \hyp{A}$.
    We have \[
        N = N_{1} \times\cdots\times N_{m}
    \]
    where each $N_{i}$ is nonabelian and simple.
    Then $\listset{N_{1},\ldots,N_{m}}$ is the set of components
    of $N$ and is hence permuted by $AG$.
    Using Lemma~\ref{p:4}(c) and the minimality of $\card{N}$
    it follows that $AX$ is transitive on $\listset{N_{1},\ldots,N_{m}}$.
    If $N_{i}$ is a $\fancyP$-group for some $i$ then so is $N$,
    whence $N = \op{\fancyP}{N;A}$,
    contrary to $X$ not normalizing $\op{\fancyP}{N;A}$.
    We deduce that no $N_{i}$ is a $\fancyP$-group.

    \setcounter{Claim}{0}
    \begin{Claim}\label{p:6:Claim1}
        $B$ acts semiregularly on $\listset{N_{1},\ldots,N_{m}}$.
    \end{Claim}
    \begin{proof}
        Assume false.
        Set $B_{0} = \n{B}{N_{1}}$.
        Then without loss, $B_{0} \not= 1$.
        As $B \leq \zz{AX}$ and $AX$ is transitive on $\listset{N_{1},\ldots,N_{m}}$
        it follows that $B_{0}$ normalizes each $N_{i}$.
        By the same reasoning,
        either $B_{0}$ acts nontrivially on each $N_{i}$
        or trivially on each $N_{i}$.
        The minimality of $\card{A}$ rules out the latter case since
        if $[B_{0},X] = 1$
        then we could replace $A$ by $A/B_{0}$.
        Hence $B_{0}$ acts nontrivially on each $N_{i}$.
        Lemma~\ref{p:5}(b) implies that
        $\op{\fancyP}{N;A} = \op{\fancyP}{N;B_{0}}$ so as $[X,B_{0}] = 1$
        it follows from Lemma~\ref{p:4}(a) that $X$ normalizes $\op{\fancyP}{N;A}$,
        a contradiction.
    \end{proof}
    \begin{Claim}\label{p:6:Claim2}
        Let $1 \leq i \leq m$.
        Then $\card{\n{A}{N_{i}}} = r$.
    \end{Claim}
    \begin{proof}
        Without loss, $i=1$ and $\listset{N_{1},\ldots,N_{l}}$ is the orbit of
        $A$ on $\listset{N_{1},\ldots,N_{m}}$ that contains $N_{1}$.
        By Claim~\ref{p:6:Claim1},
        $\n{A}{N_{1}} \cap B = 1$ so as $B \in \hyp{A}$ it follows that
        $\card{\n{A}{N_{1}}} = 1$ or $r$.
        Suppose, for a contradiction, that $\card{\n{A}{N_{1}}} = 1$.
        Then $A$ is regular on $\listset{N_{1},\ldots,N_{l}}$.
        Set $K = N_{1} \times\cdots\times N_{l}$,
        so that $K \in \comp{A}{G}$.
        \ICiteLemmaPsix\ implies that $\cz{K}{A} \isomorphic N_{1}$
        and that $\cz{K}{A}$ is maximal subject to being $A$-invariant.
        In particular,
        $\cz{K}{A}$ is not a $\fancyP$-group so Lemma~\ref{prel:2}
        implies that $X$ normalizes $K$.
        Then $[X,\cz{K}{A}] \leq X \cap K$.
        Note that $K$ is not a $\fancyP$-group since
        $N_{1}$ is not a $\fancyP$-group,
        whence $X \cap K < K$.
        Since $X \cap K$ is $A\cz{K}{A}$-invariant,
        it follows that $X \cap K \normal \cz{K}{A}$.
        Then as $\cz{K}{A}$ is not a $\fancyP$-group and $\cz{K}{A}$ is simple,
        we have $X \cap K = 1$.
        Lemma~\ref{prel:1} implies that $[X,K] = 1$.
        Recall that $AX$ is transitive on $\listset{N_{1},\ldots,N_{m}}$
        and that $K \in \comp{A}{G}$.
        It follows that $K = N$,
        whence $[X,\op{\fancyP}{N;A}] = 1$,
        a contradiction.
    \end{proof}
    \begin{Claim}\label{p:6:Claim3}
        $\card{A} = r$ and $m = 1$, so $N$ is simple.
    \end{Claim}
    \begin{proof}
        Consider the permutation action of $AX$ on $\listset{N_{1},\ldots,N_{m}}$.
        Note that $A \in \syl{r}{AX}$ so it follows from Claim~\ref{p:6:Claim2}
        that $\n{A}{N_{i}} \in \syl{r}{\n{AX}{N_{i}}}$ for all $i$.
        Set $A^{*} = \n{A}{N_{1}}$.
        Let $1 \leq i \leq m$.
        Then $\n{A}{N_{i}}$ is conjugate in $AX$ to $A^{*}$,
        so as $A$ is abelian,
        there exists $x \in X$ with $\n{A}{N_{i}}^{x} = A^{*}$.
        Now $[\n{A}{N_{i}},x] \leq A \cap X = 1$ so we deduce that \[
            \n{A}{N_{i}} = A^{*}
        \]
        for all $i$.
        Recall that $B \in \hyp{A}$ so
        Claims~\ref{p:6:Claim1} and \ref{p:6:Claim2} imply that $A = A^{*} \times B$.
        As $X = [X,A] = \cz{X}{B}$ we have $X = [X,A^{*}]$
        and it follows that $X$ normalizes each $N_{i}$.
        Then $B$ is transitive on $\listset{N_{1},\ldots,N_{m}}$ and either
        $A^{*}$ is nontrivial on each $N_{i}$ or trivial on each $N_{i}$.
        In the latter case,
        $X$ centralizes $N$ and hence $\op{\fancyP}{N;A}$,
        a contradiction.
        Thus $A^{*}$ is nontrivial on each $N_{i}$.

        We will apply Lemma~\ref{p:5},
        with $A^{*}$ in the role of $B$.
        Put $Y = (X \cap N)\pi_{1} \times\cdots\times (X \cap N)\pi_{m}$.
        Lemma~\ref{p:5}(a) implies that $Y$ is $\cz{N}{A^{*}}$-invariant.
        Note that $XY$ is a $\fancyP$-group because $X$ normalizes each $N_{i}$
        and hence each $(X \cap N)\pi_{i}$.
        Lemma~\ref{p:5}(c) implies that
        $XY$ is an $A^{*}\cz{N}{A^{*}}$-invariant.
        Lemma~\ref{p:5}(b) implies that
        $\op{\fancyP}{N;A} = \op{\fancyP}{N;A^{*}}$
        so if $A \not= A^{*}$,
        then the minimality of $\card{A}$ supplies a contradiction.
        We deduce that $A = A^{*}$.
        Then $\card{A} = r$ and $B = 1$.
        As $B$ is transitive on $\listset{N_{1},\ldots,N_{m}}$,
        we have $m = 1$.
    \end{proof}

    It is now straightforward to complete the proof.
    Note that $X \cap N$ is an $A\cz{N}{A}$-invariant $\fancyP$-subgroup of $N$
    and that $N$ is not a $\fancyP$-group.
    Since $N$ is simple it follows that $(X \cap N)\cz{N}{A} < N$.

    Suppose that $X \cap N \leq \cz{N}{A}$.
    Then $[\cz{N}{A},X,A] \leq [X \cap N, A] = 1$.
    Trivially $[A,\cz{N}{A},X] = 1$ so the Three Subgroups Lemma forces
    $[X,A,\cz{N}{A}] = 1$.
    As $X = [X,A]$ it follows from \ICiteTheoremKfourC\
    that $[X,N] = 1$.
    Then $[X,\op{\fancyP}{N;A}] = 1$,
    a contradiction.
    We deduce that $X \cap N \not\leq \cz{N}{A}$.

    Now \ICiteTheoremKone\ implies that
    $N \isomorphic \ltwo{2^{r}}$ or $\sz{2^{r}}$
    and that $\card{\out{N}} = r$.
    Consequently \[
        G = XN = \cz{G}{N} \times N.
    \]
    Let $\alpha$ and $\beta$ be the projections
    $G \longrightarrow \cz{G}{N}$ and $G \longrightarrow N$ respectively.
    Then $X \leq X\alpha \times X\beta$ and $X\beta \leq \op{\fancyP}{N;A}$.
    It follows that $X$ normalizes $\op{\fancyP}{N;A}$,
    a contradiction.
\end{proof}

\begin{proof}[Proof of Theorem~\ref{p:2}]
    Assume false and let $G$ be a minimal counterexample.
    Using Lemma~\ref{p:4}(a) it follows that $G = \op{\fancyP}{G;A}$
    and since $\fancyP$ is closed under extensions
    we have $\op{\fancyP}{G} = 1$.
    Let $N$ be a minimal $A$-invariant normal subgroup of $G$.
    Since $G = \op{\fancyP}{G;A}$,
    the minimality of $G$ implies that $G/N$ is a $\fancyP$-group.

    Suppose that $N$ is abelian.
    Then $N$ is an elementary abelian $q$-group for some prime $q$.
    Now $\op{\fancyP}{G} = 1$ so $N$ is not a $\fancyP$-group.
    The hypothesis satisfied by $\fancyP$ implies that every $\fancyP$-group
    is a $q'$-group.
    In particular,
    $N$ is a normal Sylow subgroup of $G$.
    \ICiteCoprimeActionSZ\ implies there exists an
    $A$-invariant complement $H$ to $N$,
    so $G = HN$ and $H \cap N = 1$.
    Let $P$ be an $A\cz{G}{A}$-invariant $\fancyP$-subgroup of $G$
    and set $G_{0} = PN$.
    Then $G_{0} = (G_{0} \cap H)N$ and $P$ and $G_{0} \cap H$
    are $A$-invariant complements to $N$ in $G_{0}$.
    \ICiteCoprimeActionSZ\ implies
    $P^{c} = G_{0} \cap H$ for some $c \in \cz{G_{0}}{A}$.
    Since $P$ is $A\cz{G}{A}$-invariant we obtain $P \leq H$.
    Then $G = \op{\fancyP}{G;A} \leq H$,
    a contradiction.
    We deduce that $N$ is nonabelian.
    Then $N$ is a direct product of simple groups.

    Suppose that $N \not= G$.
    Then $\op{\fancyP}{N;A}$ is a $\fancyP$-group by the minimality of $G$.
    Now $G = \op{\fancyP}{G;A}$ so Lemma~\ref{p:6}
    implies $\op{\fancyP}{N;A} \normal G$.
    As $\op{\fancyP}{G} = 1$ this forces $\op{\fancyP}{N;A} = 1$.
    Let $P$ be an $A\cz{G}{A}$-invariant $\fancyP$-subgroup of $G$.
    Then $P \cap N \leq \op{\fancyP}{N;A} = 1$.
    Since $N$ is $A$-invariant and the direct product of simple groups,
    it is the direct product of the $A$-components of $G$.
    Lemma~\ref{prel:1} implies $[P,N] = 1$.
    But $G = \op{\fancyP}{G;A}$ whence $N \leq \zz{G}$,
    a contradiction.
    We deduce that $N = G$.
    Moreover, since $N$ is a minimal $A$-invariant normal subgroup of $G$,
    it follows that $G$ is $A$-simple.

    Recall the definitions of underdiagonal and overdiagonal subgroups of $G$
    as given in \ICiteSectionAQS.
    Let $P$ be an $A\cz{G}{A}$-invariant $\fancyP$-subgroup of $G$
    and suppose that $P$ is overdiagonal.
    Then each component of $G$ is a $\fancyP$-group,
    so $G$ is a $\fancyP$-group,
    a contradiction.
    We deduce that every $A\cz{G}{A}$-invariant $\fancyP$-subgroup
    of $G$ is underdiagonal.
    \ICiteLemmaAQSeightB\ implies that $G$ possesses a unique
    maximal $A\cz{G}{A}$-invariant underdiagonal subgroup.
    Thus $G \not= \op{\fancyP}{G;A}$.
    This final contradiction completes the proof.
\end{proof}

\section{Near $(A,\mathcal P)$-subgroups}\label{nap}
Throughout this section we assume the following.
\begin{Hypothesis}\label{nap:1} \mbox{}
    \begin{itemize}
        \item   $r$ is a prime and $A$ is an elementary abelian $r$-group.

        \item   $A$ acts coprimely on the $K$-group $G$.

        \item   $\fancyP$ is a group theoretic property that is closed
                under subgroups, quotients and extensions.

        \item   Every solvable group is a $\fancyP$-group.
    \end{itemize}
\end{Hypothesis}
\begin{Definition}\label{nap:2}\mbox{}
    \begin{itemize}
        \item   $G$ is a \emph{near $(A,\fancyP)$-group} if $\cz{G}{A}$ is a $\fancyP$-group.

        \item   $\op{n\fancyP}{G} = \gen{ N \normal G }%
                {\mbox{$N$ is $A$-invariant and a near $(A,\fancyP)$-group}}$.

        \item   $\op{n\fancyP}{G;A} = \gen{ H \leq G }%
                {\mbox{$H$ is $A\cz{G}{A}$-invariant and a near $(A,\fancyP)$-group}}$.
    \end{itemize}
\end{Definition}

\begin{Lemma}\label{nap:3}\mbox{}
    \begin{enumerate}
        \item[(a)]  $\op{n\fancyP}{G}$ is a near $(A,\fancyP)$-group.

        \item[(b)]  Suppose $N \normal G$ is $A$-invariant and that $N$
                    and $G/N$ are near $(A,\fancyP)$-groups.
                    Then so is $G$.
    \end{enumerate}
\end{Lemma}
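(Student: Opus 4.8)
The plan is to prove both parts by exploiting the Coprime Action machinery, in particular the fact that for a coprimely acting group $A$ one has $\cz{G/N}{A} = \cz{G}{A}N/N$ whenever $N$ is $A$-invariant and normal. This is the single structural fact that makes ``near $(A,\fancyP)$-group'' behave well under extensions.

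For part (b), suppose $N \normal G$ is $A$-invariant with $N$ and $G/N$ near $(A,\fancyP)$-groups. First I would observe that $\cz{G/N}{A} = \cz{G}{A}N/N$ by Coprime Action; since $G/N$ is a near $(A,\fancyP)$-group this quotient is a $\fancyP$-group, i.e. $\cz{G}{A}N/N \cong \cz{G}{A}/(\cz{G}{A}\cap N)$ is a $\fancyP$-group. Next, $\cz{G}{A}\cap N = \cz{N}{A}$ (one inclusion is trivial; the other holds because $N$ is $A$-invariant), and this is a $\fancyP$-group since $N$ is a near $(A,\fancyP)$-group. Then $\cz{G}{A}$ is an extension of the $\fancyP$-group $\cz{N}{A}$ by the $\fancyP$-group $\cz{G}{A}/\cz{N}{A}$, so $\cz{G}{A}$ is a $\fancyP$-group because $\fancyP$ is closed under extensions. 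Hence $G$ is a near $(A,\fancyP)$-group.

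For part (a), I would argue by induction on $\card{G}$, or equivalently reduce to showing that if $M, N \normal G$ are $A$-invariant near $(A,\fancyP)$-groups then so is $MN$; then $\op{n\fancyP}{G}$, being generated by finitely many such $N$, is itself such a product and the claim follows. So assume $M, N$ are $A$-invariant normal near $(A,\fancyP)$-groups. Consider $MN/N \cong M/(M\cap N)$: this is an $A$-invariant quotient of the near $(A,\fancyP)$-group $M$, and by Coprime Action $\cz{M/(M\cap N)}{A} = \cz{M}{A}(M\cap N)/(M\cap N)$ is a quotient of the $\fancyP$-group $\cz{M}{A}$, hence a $\fancyP$-group; so $MN/N$ is a near $(A,\fancyP)$-group. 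Since $N$ is a near $(A,\fancyP)$-group by hypothesis, part (b) applied to $N \normal MN$ gives that $MN$ is a near $(A,\fancyP)$-group, completing the induction.

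The only point requiring care — and the main (mild) obstacle — is justifying the two uses of Coprime Action: that $\cz{G/N}{A} = \cz{G}{A}N/N$ and that the class of near $(A,\fancyP)$-groups is closed under $A$-invariant quotients. Both follow from the standard fact (``Coprime Action'' in \cite{I}) that centralizers commute with passage to $A$-invariant normal quotients when $(\card{A},\card{G}) = 1$; since $\fancyP$ is closed under subgroups and quotients, a subgroup or quotient of a $\fancyP$-group is again a $\fancyP$-group, so once the centralizer identities are in place the extension-closure of $\fancyP$ does all the remaining work. No case analysis on the structure of $G$ is needed; the whole argument is a short diagram chase through coprime centralizers.
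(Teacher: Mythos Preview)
Your proof is correct and uses the same ingredients as the paper's: Coprime Action plus extension-closure of $\fancyP$. The only minor difference is organizational: in (a) the paper invokes the product form of Coprime Action directly, namely $\cz{MN}{A} = \cz{M}{A}\cz{N}{A}$, and observes that this product of mutually normalizing $\fancyP$-subgroups is a $\fancyP$-group; you instead pass to the quotient $MN/N$ and bootstrap from your already-proved (b). Both routes are one-liners once the relevant coprime centralizer identity is in hand.
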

\begin{proof}
    (a). Suppose that $N,M \normal G$ are $A$-invariant near $(A,\fancyP)$-groups.
    Coprime Action implies that $\cz{NM}{A} = \cz{N}{A}\cz{M}{A}$ so as
    $\cz{N}{A}$ and $\cz{M}{A}$ normalize each other,
    the assumptions on $\fancyP$ imply that $\cz{N}{A}$ and $\cz{M}{A}$
    is a $\fancyP$-group.
    Thus $NM$ is a near $(A,\fancyP)$-group and the result follows.

    (b). Because $\fancyP$ is closed under extensions.
\end{proof}

The main aim of this section is to prove the following.

\begin{Theorem}\label{nap:4}\mbox{}
    \begin{enumerate}
        \item[(a)]  $\op{n\fancyP}{G;A}$ is a near $(A,\fancyP)$-group.

        \item[(b)]  Suppose that $N$ is an $A$-invariant normal subgroup of $G$
                    then $\op{n\fancyP}{N;A} = \op{n\fancyP}{G;A} \cap N$.

        \item[(c)]  Suppose that $H$ is an $A\cz{G}{A}$-invariant subgroup of $G$ then
                    $\op{\fancyP,E}{H}$ normalizes $\op{n\fancyP}{G;A}$.
    \end{enumerate}
\end{Theorem}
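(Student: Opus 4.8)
The plan is to run a single minimal-counterexample argument that proves (a), (b), (c) simultaneously, following the template of the proof of Theorem~\ref{p:2}, but now with ``near $(A,\fancyP)$-group'' in place of ``$\fancyP$-group''. The crucial enabling observation is that the property ``$\cz{G}{A}$ is a $\fancyP$-group'' behaves, on $A$-invariant subgroups and quotients and extensions, very much like an honest group-theoretic property: Lemma~\ref{nap:3} already gives closure under extensions, Coprime Action gives $\cz{N}{A} = N \cap \cz{G}{A}$ for $A$-invariant $N$ (so the subgroup case is immediate) and $\cz{G/N}{A} = \cz{G}{A}N/N$ for $A$-invariant $N\normal G$ (so the quotient case is immediate). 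Thus the abstract machinery of \S\ref{p}, and in particular Lemma~\ref{p:4}, Lemma~\ref{p:5} and Lemma~\ref{p:6}, can be re-run with the near-$(A,\fancyP)$ notion; I would either restate those lemmas in the near version or, better, apply them to the auxiliary property $\fancyP' = $``$\cz{{-}}{A}$ is a $\fancyP$-group'' after checking it satisfies Hypothesis~\ref{pc:1}. Part (b) is then the analogue of Lemma~\ref{p:4}(b) and comes essentially for free once (a) is known for $G$ and for $N$.

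For part (a), assume $G$ is a minimal counterexample. As in Theorem~\ref{p:2}, Lemma~\ref{p:4}(a) (near version) forces $G = \op{n\fancyP}{G;A}$, and then closure under extensions (Lemma~\ref{nap:3}(b)) forces $\op{n\fancyP}{G} = 1$. Let $N$ be a minimal $A$-invariant normal subgroup; by minimality $G/N$ is a near $(A,\fancyP)$-group. If $N$ is abelian, say an elementary abelian $q$-group, then it is not a near $(A,\fancyP)$-group, so $\cz{N}{A}\ne 1$ is a nontrivial $\fancyP$-group-obstruction; here the hypothesis ``every solvable group is a $\fancyP$-group'' must be used carefully — unlike in Theorem~\ref{p:2}, we cannot conclude $N$ is a $q'$-group. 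Instead I expect to argue that since $\cz{N}{A}\leq\cz{N}{B}$ for $B\in\hyp{A}$ and $N$ is $A$-simple-ish, one reduces (via \ICiteCoprimeActionSZ, an $A$-invariant complement, and the conjugacy of complements under $\cz{{}}{A}$, exactly as in Theorem~\ref{p:2}) to showing any $A\cz{G}{A}$-invariant near-$(A,\fancyP)$ subgroup lies in a fixed complement $H$. The nonabelian case splits as before: if $N\ne G$ then Lemma~\ref{p:6} (near version) gives $\op{n\fancyP}{N;A}\normal G$, hence $=1$, then Lemma~\ref{prel:1} gives $[P,N]=1$ for every relevant $P$, forcing $N\leq\zz{G}$, absurd; so $N=G$ and $G$ is $A$-simple. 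Finally, for $A$-simple $G$, I invoke the underdiagonal/overdiagonal dichotomy of \ICiteSectionAQS\ exactly as in the last paragraph of the proof of Theorem~\ref{p:2}: an overdiagonal $A\cz{G}{A}$-invariant near-$(A,\fancyP)$ subgroup would force each component's $\cz{{}}{A}$, hence $\cz{G}{A}$, to be a $\fancyP$-group, i.e.\ $G$ itself near $(A,\fancyP)$, contradiction; so all such subgroups are underdiagonal, and \ICiteLemmaAQSeightB\ gives a unique maximal one, contradicting $G=\op{n\fancyP}{G;A}$.

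For part (c): with (a) and (b) in hand, let $H$ be $A\cz{G}{A}$-invariant and set $D = \op{n\fancyP}{G;A}$, an $A$-invariant (by (a) it is actually $\cz{G}{A}$-invariant via Lemma~\ref{p:4}(a), near version, so $AG$-conjugation behaves well) near $(A,\fancyP)$-group. I want $\layer{\fancyP}{H}$, equivalently each $(A,\fancyP)$-component $K$ of $H$, to normalize $D$. By Lemma~\ref{ap:4}(k) and Lemma~\ref{ap:3}, it suffices to handle a single $\fancyP$-component, and by Lemma~\ref{pc:4}(f) distinct components normalize each other, so I may assume $K\in\comp{\fancyP}{G}$ with $\cz{K}{A}$ acting on $D$ (since $\cz{K}{A}\leq\cz{G}{A}$ normalizes $D$). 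Now apply Lemma~\ref{prel:2} with $P = D$: we have $\cz{K}{A}\leq\n{G}{D}$, and $[D,\cz{K}{A}]\leq D$ is a near $(A,\fancyP)$-group, so in particular — after passing to $D/(D\cap\zz{\layerr G})$ or directly — the hypotheses of Lemma~\ref{prel:2} are met with $\fancyP$ replaced by the auxiliary property $\fancyP'$; the conclusion is that either $K$ normalizes $D$, which is what we want, or $\cz{K/\zz{K}}{A}$ is a near-$(A,\fancyP)$ obstruction, i.e.\ trivial as a $\fancyP'$-failure. The latter says $\cz{K/\zz{K}}{A}$ is a $\fancyP$-group; but then $K/\op{\fancyP}{K}$, which is $A$-quasisimple with $\cz{K/\op{\fancyP}{K}}{A}$ a $\fancyP$-group, would have to be — using \ICiteTheoremKfourA\ that $\cz{{}}{A}$ of an $A$-quasisimple $K$-group is nonsolvable — contradictory, since $\fancyP\subseteq$ its own closure and a nonsolvable perfect central quotient of $\cz{K}{A}$ cannot be a $\fancyP$-group when combined with $K=\oupper{\fancyP}{K}$ (the same reasoning as Lemma~\ref{pc:4}(c),(i)). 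Hence $K$ normalizes $D$, proving (c).

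The main obstacle I anticipate is the abelian minimal-normal-subgroup case in part (a). In Theorem~\ref{p:2} the extension-closure of $\fancyP$ combined with $\op{\fancyP}{G}=1$ forced $\fancyP$-groups to be $q'$-groups and $N$ to be a normal Sylow subgroup, which is exactly what made the complement argument go through; here a near-$(A,\fancyP)$ group need not have order prime to $q$, so one must instead argue that $N\cap\cz{G}{A}=\cz{N}{A}$ must be trivial for the complement conjugacy to be invoked — and $\cz{N}{A}$ can be nontrivial. I expect the resolution is that $\cz{N}{A}$ is a $\fancyP$-group (since $N\leq D$ would make $N$ near $(A,\fancyP)$, contradiction, so $\cz{N}{A}$ is a proper obstruction) forces, via the $A$-simplicity/minimality of $N$ and \ICiteCoprimeActionSZ, that $N$ is still a normal Sylow $q$-subgroup after all — because a $\fancyP$-group that is a near-$(A,\fancyP)$ counterexample piece, intersected with the $\fancyP$-radical being trivial, still yields $q\notin\primes{\fancyP\text{-groups that can occur}}$ by a Frattini/Sylow argument — and then the complement argument of Theorem~\ref{p:2} applies verbatim. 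Pinning down this reduction precisely, rather than by analogy, is the delicate point.
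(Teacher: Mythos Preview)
Your ``main obstacle'' in part (a) is a mirage, and you have it exactly backwards. Under Hypothesis~\ref{nap:1} every solvable group is a $\fancyP$-group, so any abelian $A$-invariant $N$ has $\cz{N}{A}$ solvable, hence a $\fancyP$-group, hence $N$ \emph{is} a near $(A,\fancyP)$-group. Thus once you reduce to $\op{n\fancyP}{G}=1$ in a minimal counterexample you get $\sol{G}=1$ for free, and there is no abelian case at all. The paper exploits this: with $\sol{G}=1$ one has $\cz{G}{\layerr{G}}=1$, and (a) is dispatched in two lines by picking an $A$-component $L$ that is not near $(A,\fancyP)$ and applying Corollary~\ref{nap:6} (an immediate consequence of Lemma~\ref{nap:5}) to force $L\leq\zz{G}$. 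There is no need to re-run the machinery of Lemmas~\ref{p:5} and~\ref{p:6} in a near version, nor to invoke the over/underdiagonal dichotomy; Lemma~\ref{nap:5} already handles the $A$-simple situation directly.

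Your argument for (c) has a genuine gap. In Lemma~\ref{prel:2} the hypothesis requires $K\in\comp{A}{G}$, whereas your $K$ lies in $\comp{A,\fancyP}{H}$; and even granting the hypothesis, the conclusion with $P=D$ would be $D\leq\n{G}{K}$, not the $K\leq\n{G}{D}$ you claim. Your fallback contradiction also fails: \ICiteTheoremKfourA\ gives only $\cz{\br{K}}{A}'\ne 1$, not nonsolvability, and in any case $\cz{K/\zz{K}}{A}$ being a $\fancyP$-group is no contradiction---it just says $K$ is near $(A,\fancyP)$, which is compatible with $K\in\comp{A,\fancyP}{H}$. The paper's route for (c) is quite different: after passing to $G/\op{n\fancyP}{G}$ so that $\sol{G}=1$, it partitions $\comp{A}{G}$ into the near-$(A,\fancyP)$ members $\mathcal C_{n}$ and the rest $\mathcal C_{0}$, then uses Corollary~\ref{nap:6}, Lemma~\ref{nap:5} and Lemma~\ref{prel:1} to show that both $\op{n\fancyP}{G;A}$ and every $(A,\fancyP)$-component of $H$ centralize $\listgen{\mathcal C_{0}}$, hence normalize $\listgen{\mathcal C_{n}}=\cz{\layerr{G}}{\listgen{\mathcal C_{0}}}$; since this normalizer is proper, induction on $\card{G}$ finishes.
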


\begin{Lemma}\label{nap:5}
    Suppose that $G$ is $A$-simple and that $X \not= 1$ is an
    $A\cz{G}{A}$-invariant near $(A,\fancyP)$-subgroup of $G$.
    Then $G$ is a near $(A,\fancyP)$-group.
\end{Lemma}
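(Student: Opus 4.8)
The goal is to show that if $G$ is $A$-simple and admits a nontrivial $A\cz{G}{A}$-invariant near $(A,\fancyP)$-subgroup $X$, then $\cz{G}{A}$ itself is a $\fancyP$-group. Since $G$ is $A$-simple, $G = \layerr{G}$ is a direct product $N_{1} \times \cdots \times N_{m}$ of nonabelian simple $K$-groups permuted transitively by $A$ (with $\zz{G}=1$, as a direct product of nonabelian simple groups). The plan is to use the structure theory of $A$-simple groups together with Lemma~\ref{prel:1} and the $\fancyP$-subgroup machinery of \S\ref{p} (especially Lemma~\ref{p:6} and the underdiagonal/overdiagonal dichotomy from \ICiteLemmaAQSeightB) to force $X$ to act in a way that exposes $\cz{G}{A}$.

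First I would dispose of the case where $X$ normalizes every $N_{i}$: then each projection $X\pi_{i}$ is a $B\cz{N_{i}}{B}$-invariant near-$(A,\fancyP)$-subgroup of $N_{i}$ (for a suitable $B = \n{A}{N_i}$ acting coprimely), and using \ICiteTheoremKone\ one knows the relevant simple $K$-groups and can check directly that a nonabelian simple $K$-group with a nontrivial invariant near-solvable-type subgroup has centralizer of $A$ solvable (hence a $\fancyP$-group, as every solvable group is a $\fancyP$-group by Hypothesis~\ref{nap:1}); then $\cz{G}{A}$ is a direct product of such, hence a $\fancyP$-group. The general case reduces to this: consider $Y = \listgen{X^{G}}$, an $A$-invariant normal subgroup of $G$; since $G$ is $A$-simple and $X \neq 1$, either $Y = G$ or $Y$ lies in a product of fewer-than-all $N_i$'s — but $A$ is transitive on the $N_i$, so in fact $Y = G$, i.e. the projection of $X$ to each $N_i$ is nontrivial. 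Now apply the underdiagonal/overdiagonal dichotomy of \ICiteSectionAQS: if some $A\cz{G}{A}$-invariant conjugate-translate of $X$ were overdiagonal, each component $N_i$ would be a homomorphic image of a section of $X$, making $N_i$ a near-$(A,\fancyP)$-group directly; otherwise $X$ is underdiagonal, and \ICiteLemmaAQSeightB\ pins down $\cz{G}{A}$ up to the single-factor analysis already handled. Throughout, Lemma~\ref{prel:1} is the tool that converts ``$X$ meets some $A$-component only in the center'' into ``$X$ centralizes that component'', which contradicts $Y = G$ and $X \neq 1$ unless $\cz{G}{A}$ has the desired structure.

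The main obstacle I anticipate is the overdiagonal/underdiagonal bookkeeping: one must be careful that the near-$(A,\fancyP)$-property passes correctly through the projections $\pi_i$ and through the diagonal-type subgroups, because ``near $(A,\fancyP)$'' is not obviously subgroup-closed (it constrains $\cz{\cdot}{A}$, not the whole group), so one cannot blithely restrict $X$ to a factor. The fix is to work with $X\cz{G}{A}$ and use Coprime Action to control fixed points of $A$ on products and on diagonals, combined with Lemma~\ref{p:5}(b) to replace $A$ by the subgroup $\n{A}{N_i}$ acting faithfully on a single factor. Once $X$ is shown to project onto each $N_i$ and the single-factor case is in hand, the conclusion $\cz{G}{A} = \prod \cz{N_i}{\n{A}{N_i}}$ being a $\fancyP$-group follows by the extension-closure of $\fancyP$. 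I would also expect a short appeal to \ICiteTheoremKoneC\ or \ICiteTheoremKfourC\ to rule out the degenerate possibility that $X$ centralizes $G$ modulo its center, which would contradict $X \neq 1$ in an $A$-simple group with $\zz{G} = 1$.
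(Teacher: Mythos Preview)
Your plan has a genuine gap at the single-factor step, and this is exactly where the content of the lemma lives. You assert that for a simple $K$-group $N_{i}$ admitting a nontrivial $B\cz{N_{i}}{B}$-invariant near-$(B,\fancyP)$-subgroup one can ``check directly'' via \ICiteTheoremKone\ that $\cz{N_{i}}{B}$ is \emph{solvable}. That is false in general: Hypothesis~\ref{nap:1} only demands that every solvable group be a $\fancyP$-group, so $\fancyP$ may well admit nonabelian simple composition factors, and the conclusion you need is merely that $\cz{G}{A}$ is a $\fancyP$-group, not that it is solvable. Thus even if your projection bookkeeping (which you correctly flag as delicate, since ``near $(A,\fancyP)$'' is a condition on $A$-fixed points and does not obviously pass to $\pi_{i}$-images) could be made rigorous, the single-factor case lands you back at the original problem with no reduction achieved. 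The overdiagonal/underdiagonal dichotomy and Lemma~\ref{p:6} do not help here: they control $\fancyP$-subgroups of $G$, whereas what you must control is the structure of $\cz{G}{A}$.

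The paper's proof is much shorter and avoids the direct-product decomposition entirely. The move you are missing is to replace $X$ by $\cz{X}{A}$: this is an honest $\fancyP$-group (by definition of near $(A,\fancyP)$) and is normal in $\cz{G}{A}$. One first disposes of the case $\sol{X}\not=1$ via \ICiteTheoremKoneC, which does give $\cz{G}{A}$ solvable. Otherwise one may assume $\sol{X}=\sol{\cz{X}{A}}=1$, so $1\not=\layerr{\cz{X}{A}}\normal\cz{G}{A}$. The decisive input is then \ICiteTheoremAQSfiveABandTheoremKone: for an $A$-simple $K$-group $G$ with $\cz{G}{A}$ nonsolvable, $\gfitt{\cz{G}{A}}$ is itself $A$-simple and $\cz{G}{A}/\gfitt{\cz{G}{A}}$ is solvable. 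Hence $\layerr{\cz{X}{A}}=\gfitt{\cz{G}{A}}$, so $\gfitt{\cz{G}{A}}$ is a $\fancyP$-group, and extension-closure of $\fancyP$ (with solvable on top) gives that $\cz{G}{A}$ is a $\fancyP$-group. None of Lemma~\ref{prel:1}, Lemma~\ref{p:5}, Lemma~\ref{p:6}, or the underdiagonal/overdiagonal analysis is used.
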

\begin{proof}
    Suppose first that $\sol{X} \not= 1$.
    Then $G$ possesses a nontrivial $A\cz{G}{A}$-invariant solvable subgroup.
    \ICiteTheoremKoneC\ implies that $\cz{G}{A}$ is solvable.
    By hypothesis,
    every solvable group is a $\fancyP$-group
    so $G$ is a near $(A,\fancyP)$-group.
    Hence we may assume that $\sol{X} = 1$.
    Moreover, by considering $\cz{X}{A}$ in place of $X$,
    we may assume that $\sol{\cz{X}{A}} = 1$.

    Since $\sol{X} = 1$, \ICiteTheoremKfourA\
    implies $\cz{X}{A} \not= 1$,
    so as $\sol{\cz{X}{A}} = 1$ we have
    $1 \not= \layerr{\cz{X}{A}} \normal \cz{G}{A}$.
    Now $G$ is an $A$-simple $K$-group and $\cz{G}{A}$ is nonsolvable so $\gfitt{\cz{G}{A}}$ is
    $A$-simple and $\cz{G}{A}/\gfitt{\cz{G}{A}}$ is solvable
    by \ICiteTheoremAQSfiveABandTheoremKone.
    Then $\layerr{\cz{X}{A}} = \gfitt{\cz{G}{A}}$
    so $\gfitt{\cz{G}{A}}$ is a $\fancyP$-group.
    Since $\cz{G}{A}/\gfitt{\cz{G}{A}}$ is solvable,
    the hypothesis on $\fancyP$ implies that $\cz{G}{A}$
    is a $\fancyP$-group.
    Then $G$ is a near $(A,\fancyP)$-group.
\end{proof}

\begin{Corollary}\label{nap:6}
    Suppose that $X$ is an $A\cz{G}{A}$-invariant
    near $(A,\fancyP)$-subgroup of $G$ and that $L \in \comp{A}{G}$
    with $\zz{L} = 1$.
    Then $[X,L] = 1$ or $L$ is a near $(A,\fancyP)$-group.
\end{Corollary}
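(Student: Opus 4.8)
The plan is to obtain the dichotomy directly from Lemmas~\ref{prel:1} and \ref{nap:5} by examining the subgroup $X \cap L$ of $L$ and distinguishing the cases $X \cap L = 1$ and $X \cap L \neq 1$. First some bookkeeping: since $\cz{L}{A} \leq \cz{G}{A}$, the subgroup $X$ is $A\cz{L}{A}$-invariant; and since $\zz{L} = 1$, the $A$-component $L$ is a direct product of nonabelian simple groups permuted transitively by $A$, so $L$ is $A$-simple and $A$ acts coprimely on the $K$-group $L$ (subnormal $A$-invariant subgroups of $G$ inherit these properties).

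Suppose first that $X \cap L = 1$. Then $X \cap L \leq \zz{L}$, so Lemma~\ref{prel:1}, applied with $L$ in the role of $K$ and $X$ in the role of $H$, gives $[X,L] = 1$, which is the first alternative.

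Now suppose that $X \cap L \neq 1$. Then $X \cap L$ is $A$-invariant and is normalized by $\cz{L}{A}$ (which lies in both $\cz{G}{A}$ and $L$), hence is $A\cz{L}{A}$-invariant. Furthermore $\cz{X \cap L}{A} \leq \cz{X}{A}$, which is a $\fancyP$-group since $X$ is a near $(A,\fancyP)$-group, and $\fancyP$ is closed under subgroups, so $\cz{X \cap L}{A}$ is a $\fancyP$-group; that is, $X \cap L$ is a near $(A,\fancyP)$-group. Thus $X \cap L$ is a nontrivial $A\cz{L}{A}$-invariant near $(A,\fancyP)$-subgroup of the $A$-simple group $L$, and Lemma~\ref{nap:5}, with $L$ in the role of $G$ and $X \cap L$ in the role of $X$, yields that $L$ is a near $(A,\fancyP)$-group, the second alternative.

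There is no substantial obstacle here; the corollary is essentially an assembly of the two preceding results. The only points needing care are the routine verifications that $X \cap L$ inherits both the near-$(A,\fancyP)$ property (from subgroup-closure of $\fancyP$) and the $A\cz{L}{A}$-invariance, and that $L$, though only subnormal in $G$, still meets the hypotheses of Lemma~\ref{nap:5} — it is $A$-simple precisely because $\zz{L} = 1$, and it is a $K$-group admitting a coprime action of $A$.
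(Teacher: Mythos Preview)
Your proof is correct and follows essentially the same route as the paper: the paper assumes $[X,L]\neq 1$, invokes Lemma~\ref{prel:1} to get $X\cap L\neq 1$, and then applies Lemma~\ref{nap:5} to $X\cap L$ inside $L$. Your case split on $X\cap L$ is logically equivalent, and you have spelled out the routine verifications (invariance, subgroup-closure of $\fancyP$, $A$-simplicity of $L$) that the paper leaves implicit.
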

\begin{proof}
    Assume $[X,L] \not= 1$.
    Lemma~\ref{prel:1} implies that $X \cap L \not= 1$ so as $X \cap L$
    is an $A\cz{L}{A}$-invariant near $(A,\fancyP)$-subgroup of $L$,
    the result follows.
\end{proof}

\begin{proof}[Proof of Theorem~\ref{nap:4}]
    (a). Assume false and let $G$ be a minimal counterexample.
    Then $G = \op{n\fancyP}{G;A}, \op{n\fancyP}{G} = 1, \sol{G} = 1$
    and $G/\layerr{G}$ is a near $(A,\fancyP)$-group.
    It follows that $\layerr{G}$ is not a near $(A,\fancyP)$-group
    and that there exists $L \in \comp{A}{G}$ such that $L$ is
    not a near $(A,\fancyP)$-group.
    Note that $\zz{L} \leq \sol{G} = 1$.
    As $G = \op{n\fancyP}{G;A}$,
    Corollary~\ref{nap:6} implies $L \leq \zz{G}$,
    a contradiction.

    (b). Since $\op{n\fancyP}{G;A} \cap N$ is a near $(A,\fancyP)$-group
    we have $\op{n\fancyP}{G;A} \cap N \leq \op{\fancyP}{N;A}$.
    Now $\cz{G}{A}$ permutes the $A\cz{N}{A}$-invariant
    near $(A,\fancyP)$-subgroups of $N$ so $\cz{G}{A}$
    normalizes $\op{\fancyP}{N;A}$.
    Thus $\op{n\fancyP}{N;A} \leq \op{\fancyP}{G;A} \cap N$,
    completing the proof.

    (c). Since $\op{n\fancyP}{G} \leq \op{n\fancyP}{G;A}$
    we may pass to the quotient $G/\op{n\fancyP}{G}$ and
    assume that $\op{n\fancyP}{G} = 1$.
    Now every solvable group is a $\fancyP$-group so
    $\sol{G} = 1$ and hence $\cz{G}{\layerr{G}} = 1$.
    Let
    \begin{align*}
        \mathcal C_{n} &= \set{ K \in \comp{A}{G} }%
                {\mbox{$K$ is a near $(A,\fancyP)$-group}} \\
        \mathcal C_{0} &= \comp{A}{G} - \mathcal C_{n}.
    \end{align*}
    Corollary~\ref{nap:6} implies that $\op{n\fancyP}{G;A}$
    centralizes $\listgen{\mathcal C_{0}}$.
    Then $\op{n\fancyP}{G;A}$ normalizes
    $\listgen{\mathcal C_{n}} = \cz{\layerr{G}}{\listgen{\mathcal C_{0}}}$.
    As $\cz{G}{\layerr{G}} = 1$ we also have $\mathcal C_{n} \not= \emptyset$.

    Suppose $L \in \comp{A,\fancyP}{H}$.
    We claim that $L$ acts trivially on $\mathcal C_{0}$.
    If $L \leq \layerr{G}$ then the claim is trivial
    so suppose $L \not\leq \layerr{G}$.
    Now every solvable group is a $\fancyP$-group so $\op{\fancyP}{L}$
    is the unique maximal $A$-invariant normal subgroup of $L$.
    Consequently $L \cap \layerr{G} \leq \op{\fancyP}{L}$.
    Now $\layerr{G} \cap H\cz{G}{A} \normal H\cz{G}{A}$ so
    Theorem~\ref{ap:4}(d) implies that $\layerr{G} \cap H\cz{G}{A}$
    normalizes $L$.
    Let $K \in \mathcal C_{0}$.
    Then $\cz{K}{A} \leq \n{G}{L}$.
    Since $K \cap L \leq L \cap \layerr{G} \leq \op{\fancyP}{L}$,
    Lemma~\ref{nap:5} implies $K \cap L = 1$,
    whence $[K,L] = 1$ by Lemma~\ref{prel:1}.
    This establishes the claim.
    We deduce that $\layer{\fancyP}{H}$ normalizes $\listgen{\mathcal C_{0}}$
    and also normalizes
    $\listgen{\mathcal C_{n}} = \cz{\layerr{G}}{\listgen{\mathcal C_{0}}}$.
    We have previously seen that $\op{n\fancyP}{G;A}$ normalizes
    $\listgen{\mathcal C_{n}}$ so as $\op{\fancyP}{H} \leq \op{n\fancyP}{G;A}$
    we have that \[
        \listgen{ \op{\fancyP,E}{H}, \op{n\fancyP}{G;A}, \cz{G}{A} } %
        \leq \n{G}{\listgen{\mathcal C_{n}}}.
    \]
    Since $\op{n\fancyP}{G} = 1$,
    the normalizer is a proper subgroup of $G$.
    The conclusion follows by induction.
\end{proof}

\section{Local to global results}\label{lglob}
We shall generalize some of the results of \ICiteSectionL\
concerning $A$-components to $(A,\fancyP)$-components.
Consider the following:
\begin{Hypothesis}\label{lglob:1}\mbox{}
    \begin{itemize}
        \item   $r$ is a prime and $A$ is an elementary abelian
                $r$-group that acts coprimely on the $K$-group $G$.

        \item   $\fancyP$ is a group theoretic property that
                satisfies Hypothesis~\ref{pc:1}.

        \item   $H$ is an $A\cz{G}{A}$-invariant subgroup of $G$.
    \end{itemize}
\end{Hypothesis}

The aim is to establish a connection between the $(A,\fancyP)$-components
of $H$ and the structure of $G$.
This is not possible in full generality,
but if additional assumptions are made then it is.

\begin{Hypothesis}\label{lglob:2}\mbox{}
    \begin{itemize}
        \item   Hypothesis~\ref{lglob:1}.

        \item   Every $\fancyP$-group is solvable.

        \item   $a \in A\nonid$ and $H$ is $\cz{G}{a}$-invariant.
    \end{itemize}
\end{Hypothesis}

\begin{Hypothesis}\label{lglob:3}\mbox{}
    \begin{itemize}
        \item   Hypothesis~\ref{lglob:1}.

        \item   Whenever $A$ acts coprimely on the $K$-group $X$
                and $\cz{X}{A}$ is a $\fancyP$-group then $X$ is solvable.
    \end{itemize}
\end{Hypothesis}

\begin{Lemma}\label{lglob:4}\mbox{}
    \begin{enumerate}
        \item[(a)]  Assume Hypothesis~\ref{lglob:1}.
                    If $\fancyP$ is any of the properties
                    ``is trivial'', ``is nilpotent'' or ``has odd order''
                    then Hypothesis~\ref{lglob:3} is satisfied.

        \item[(b)]  Assume Hypothesis~\ref{lglob:3}.
                    Then every $\fancyP$-group is solvable.
    \end{enumerate}
\end{Lemma}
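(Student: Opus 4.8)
The plan is to prove the two parts in the order they are stated, since part (b) is in essence a special case of the reasoning needed for part (a) and both hinge on the same input: the classification-flavoured result \ICiteTheoremAQSfiveABandTheoremKone\ describing the possibilities for $\cz{X}{A}$ when $A$ acts coprimely on an $A$-simple $K$-group $X$.

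For part (a), I would argue by induction on $\card{X}$ after reducing to the case where $X$ is $A$-simple. So suppose $A$ acts coprimely on the $K$-group $X$ with $\cz{X}{A}$ a $\fancyP$-group, where $\fancyP$ is ``is trivial'', ``is nilpotent'' or ``has odd order''. First I would dispose of the solvable radical: $\cz{X/\sol{X}}{A} = \cz{X}{A}\sol{X}/\sol{X}$ by Coprime Action, and this is a quotient of $\cz{X}{A}$, hence still a $\fancyP$-group; so replacing $X$ by $X/\sol{X}$ I may assume $\sol{X}=1$, and then it suffices to show $\layerr{X}$ is solvable, i.e.\ empty. If $\layerr{X} \neq 1$, pick a single $A$-component $L$ of $X$; then $\cz{L}{A} \leq \cz{X}{A}$ is a $\fancyP$-group, and replacing $X$ by $L$ I may assume $X$ is $A$-quasisimple, indeed (killing the centre, which lies in $\sol{X}$) $A$-simple. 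Now apply \ICiteTheoremAQSfiveABandTheoremKone: either $\cz{X}{A}$ is nonsolvable — impossible since a nontrivial/nilpotent/odd-order group is solvable — or the structure of $\cz{X}{A}$ is pinned down tightly enough (it contains $\gfitt{\cz{X}{A}}$ which is $A$-simple when nonsolvable, and otherwise $\cz{X}{A}$ is solvable of a restricted shape). In the remaining case one checks directly that $\cz{X}{A}$, being the fixed points of a coprime action on a simple $K$-group, cannot be trivial (by \ICiteTheoremKfourA), cannot be nilpotent, and cannot have odd order — the last because $\cz{X}{A}$ contains an involution for every simple $K$-group arising here. Each of these three verifications is a short inspection running through the list of $A$-simple $K$-groups supplied by \ICiteTheoremKone\ and the description of centralizers in \ICiteSectionAQS; I would present it as a single case analysis rather than three separate ones.

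For part (b), assume Hypothesis~\ref{lglob:3} and let $X$ be a $\fancyP$-group; I want $X$ solvable. The trick is to manufacture an instance of the hypothesis: let $A$ act on $X$ trivially — but that is not coprime unless $A=1$, so instead form $Y = X \times A$-nothing; the clean move is to note that the hypothesis in \ref{lglob:3} is universally quantified over all $K$-groups $X$ on which $A$ acts coprimely, and to feed it the action where $A$ acts on $Y := X$ with $A$ having order coprime to $\card{X}$ — by discarding from $A$ the primes dividing $\card{X}$, replace $A$ by a suitable subgroup $A_0 \leq A$ (nonempty since $A \neq 1$; if $A = 1$ the statement is vacuous or handled separately) acting trivially and coprimely on $X$. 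Then $\cz{X}{A_0} = X$ is a $\fancyP$-group, so Hypothesis~\ref{lglob:3} forces $X$ solvable.

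The main obstacle is part (a): everything there rests on knowing precisely which groups occur as $\cz{X}{A}$ for $X$ an $A$-simple $K$-group, and on verifying that none of those is trivial, nilpotent, or of odd order. That verification is a finite check against the classification but it is the substantive content; the inductive reductions around it are routine. I would isolate the key claim as: \emph{if $A$ acts coprimely on the $A$-simple $K$-group $X$ then $\cz{X}{A}$ is nonsolvable, or $\cz{X}{A}$ has even order and is nonabelian} — once this is in hand, all three instances of $\fancyP$ in (a) follow at once, and (b) follows formally.
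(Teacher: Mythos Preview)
The paper's proof is two lines: part~(a) is an immediate citation of \ICiteTheoremKfour, and part~(b) is ``let $A$ act trivially on $X$'', so that $\cz{X}{A}=X$ is a $\fancyP$-group and Hypothesis~\ref{lglob:3} gives $X$ solvable.

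For part~(a) your plan is not incorrect, but it is far heavier than what is required. You propose an induction reducing to the $A$-simple case, followed by a case inspection using \ICiteTheoremAQSfiveABandTheoremKone\ and the material of \ICiteSectionAQS. That is essentially a re-derivation of the relevant parts of \ICiteTheoremKfour: that theorem already contains, as separate clauses, the implications ``$\cz{X}{A}$ trivial $\Rightarrow X$ solvable'', ``$\cz{X}{A}$ nilpotent $\Rightarrow X$ solvable'', and ``$\cz{X}{A}$ of odd order $\Rightarrow X$ solvable'' for an elementary abelian $r$-group $A$ acting coprimely on a $K$-group $X$. No induction, no reduction to the $A$-simple case, and no case analysis are needed; one simply quotes the theorem.

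For part~(b) your argument has a genuine error. You assert that the trivial action of $A$ on $X$ ``is not coprime unless $A=1$''; this is false. Coprimality means $\gcd(\card{A},\card{X})=1$, i.e.\ $r\nmid\card{X}$, and does not force $A=1$. Your proposed repair --- ``discard from $A$ the primes dividing $\card{X}$'' and pass to a subgroup $A_{0}\leq A$ --- cannot work: $A$ is an elementary abelian $r$-group, so every nontrivial subgroup is again an $r$-group, and you discard either nothing or all of $A$. Worse, Hypothesis~\ref{lglob:3} is formulated for the fixed group $A$, not for arbitrary subgroups, so even if such an $A_{0}$ existed the hypothesis would not apply to it. The paper's argument is simply the trivial action itself; your detour adds nothing and does not address the only case (namely $r\mid\card{X}$) in which a worry could arise.
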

\begin{proof}
    (a). This follows from \ICiteTheoremKfour.

    (b). Let $X$ be a $\fancyP$-group and let $A$ act trivially on $X$.
\end{proof}

\noindent We state the main result of this section.
\begin{Theorem}\label{lglob:5}\mbox{}
    \begin{enumerate}
        \item[(a)]  Assume Hypothesis~\ref{lglob:2} and that $K \in \comp{A,\fancyP}{H}$
                    satisfies $K = [K,a]$.
                    Then $K \in \comp{A,\fancyP}{G}$.

        \item[(b)]  Assume Hypothesis~\ref{lglob:3}.
                    Then $\op{\fancyP}{H}\layer{\fancyP}{H}$
                    acts trivially on $\compsol{G}$.
                    If $K \in \comp{A,\fancyP}{H}$ then there exists a
                    unique $\widetilde{K} \in \compasol{G}$
                    with $K \leq \widetilde{K}$.
    \end{enumerate}
\end{Theorem}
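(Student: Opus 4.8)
The plan is to handle the two parts largely independently, with part~(b) being the real work. For part~(a), under Hypothesis~\ref{lglob:2} we have that every $\fancyP$-group is solvable, so $\op{\fancyP}{K}$ is the unique maximal $A$-invariant normal subgroup of $K$ and $K/\op{\fancyP}{K}$ is $A$-quasisimple. The hypothesis $K = [K,a]$ says that $a$ acts without fixed points on the $A$-quasisimple quotient. I would first show $K \subnormal G$: since $H$ is $A\cz{G}{A}$-invariant and $\cz{G}{a}$-invariant, and $K \subnormal H$, one invokes the analogue for $(A,\fancyP)$-components of the corresponding result from \ICiteSectionL\ for $A$-components — the key being that the map $K \mapsto \widetilde K$ of part~(b), or rather its special case, upgrades subnormality in $H$ to subnormality in $G$ using the fixed-point condition $K = [K,a]$ together with Hypothesis~\ref{lglob:2}. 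Concretely I expect to pass to $\br G = G/\op{\fancyP}{G}$ via Lemma~\ref{ap:4}(j) (a bijection here since $\fancyP$-groups are solvable), reduce to the case $\op{\fancyP}{G} = 1$, whence $K$ is an honest $A$-component of $H$ with $K = [K,a]$, and then cite \ICiteTheoremLone\ to conclude $K \in \comp{A}{G} = \comp{A,\fancyP}{G}$.

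For part~(b), I would first prove that $\op{\fancyP}{H}\layer{\fancyP}{H}$ acts trivially on $\compsol{G}$. Let $K \in \comp{A,\fancyP}{H}$ and $L \in \compsol{G}$. By Lemma~\ref{ap:4}(k), $K \normal \listgen{K^G}$, so $K$ normalizes each $A$-component of $\listgen{K^G}$; the point is to show $K$ normalizes $L$ and then centralizes $L/\zz{L}$. If $K \leq \layerr{G}\sol{G}$ this should be routine; otherwise, since under Hypothesis~\ref{lglob:3} every $\fancyP$-group is solvable (Lemma~\ref{lglob:4}(b)), $\op{\fancyP}{K}$ is the unique maximal $A$-invariant normal subgroup of $K$, so $K \cap \layerr{G}\sol{G} \leq \op{\fancyP}{K}$. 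Then I would apply Lemma~\ref{prel:2} with $P = L$: the hypothesis $[L,\cz{K}{A}] \leq L \cap \op{\fancyP}{K}$ being a $\fancyP$-group (it is solvable, as a subgroup of the solvable group $L$), and the fact that $\cz{K/\zz{K}}{A}$ is \emph{not} a $\fancyP$-group — here Hypothesis~\ref{lglob:3} is exactly what rules out $\cz{K/\zz K}{A}$ being a $\fancyP$-group, since $K/\op{\fancyP}{K}$ is a nonsolvable $K$-group — forces $L \leq \n{G}{K}$, and symmetrically; then Lemma~\ref{prel:1} applied inside $K$ gives $[K,L] = 1$. Since $\op{\fancyP}{H}$ and $\layer{\fancyP}{H}$ are generated by such $K$'s together with solvable pieces that visibly normalize $\sol$-components, triviality of the action follows.

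For the second assertion of part~(b), given $K \in \comp{A,\fancyP}{H}$ I need to produce a containing $(A,\sol)$-component of $G$. I would pass to $\br G = G/\op{\sol}{G}\sol{G}$ — more carefully to $G/\op{\sol}{G}$ then note $\sol$-components live in $\layerr{\br G}$ — so that $\br K$ becomes an $A$-component-like object, then invoke \ICiteTheoremLone\ or its $(A,\sol)$ refinement to get $\br K$ inside a unique $A$-component of $\br G$, which pulls back to the desired $\widetilde K \in \compasol{G}$. Uniqueness follows from Lemma~\ref{ap:4}(e),(g): two $(A,\sol)$-components both containing the perfect group $K$ must, by (g) applied with $K \leq \listgen{\widetilde K_1, \widetilde K_2}$, have $K$ inside one of them, and then (e)-type arguments pin it down — actually uniqueness is cleaner: if $K \leq \widetilde K_1$ and $K \leq \widetilde K_2$ then by Lemma~\ref{ap:4}(f) either $\widetilde K_1 = \widetilde K_2$ or $K \leq \op{\sol}{\widetilde K_1} \cap \op{\sol}{\widetilde K_2}$, the latter impossible since $K$ is nonsolvable. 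The main obstacle I anticipate is the passage from ``$(A,\fancyP)$-component of $H$'' to ``$A$-component of a suitable quotient'' while tracking that $K$ remains subnormal — the subnormality of $\widetilde K$ in $G$, and the verification that the $\fancyP$-radical quotients behave correctly under the quotient map — and making Lemma~\ref{prel:2}'s dichotomy fire cleanly in the first part of (b) requires care about which subgroup plays the role of $P$ and confirming $\cz{K}{A} \leq \n{G}{L}$, which itself needs Lemma~\ref{ap:4}(d) applied to the $A$-invariant subnormal subgroup $\layerr{G}\sol{G} \cap H\cz{G}{A}$ of $H\cz{G}{A}$.
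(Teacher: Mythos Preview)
There are genuine gaps in both parts.

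For part~(a), your reduction is flawed: passing to $\br{G} = G/\op{\fancyP}{G}$ does not force $\op{\fancyP}{K} = 1$, because $K$ is subnormal in $H$, not in $G$, so Lemma~\ref{pc:3}(b) does not apply to $K$. Thus $\br{K}$ need not be an $A$-component of $\br{H}$, and you cannot simply invoke \ICiteTheoremLone. Even if you could, that result yields only containment in a larger component, not $K \in \comp{A}{G}$. The paper instead reduces first to $A = \listgen{a}$ (by decomposing $K$ into its $\listgen{a}$-components, each satisfying $K_{i} = [K_{i},a]$), then to $\sol{G} = 1$; it shows via Lemma~\ref{lglob:7} that $K$ acts trivially on $\compp{G}$, applies Lemma~\ref{lglob:6} to produce $\widetilde{K} \in \comp{A}{G}$ containing $K$, and then uses \ICiteCorollaryAQSnine\ together with $K = [K,a]$ and $\cz{\widetilde{K}}{a} \leq H \cap \widetilde{K}$ to force $H \cap \widetilde{K} = \widetilde{K}$, whence $K = \widetilde{K}$. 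The passage back from $\sol{G} = 1$ to the general case uses \ICiteLemmaGthree.

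For part~(b), you have the roles in Lemma~\ref{prel:2} reversed. In that lemma the subgroup called $K$ must lie in $\comp{A}{G}$; your $K \in \comp{A,\fancyP}{H}$ does not qualify. The correct assignment --- and the one the paper makes, packaged as Lemma~\ref{lglob:7} --- is: $L \in \comp{A}{G}$ plays the role of ``$K$'', while $\op{\fancyP}{H}$ or an $(A,\fancyP)$-component of $H$ plays the role of ``$P$''. The dichotomy then reads: either $P$ normalizes $L$, or $\cz{L/\zz{L}}{A}$ is a $\fancyP$-group --- and it is the latter that Hypothesis~\ref{lglob:3} rules out (since $L$ is nonsolvable), not any statement about $\cz{K/\zz{K}}{A}$. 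Relatedly, you call $L$ ``solvable'': it is not; $L$ is a $\mathrm{sol}$-component, so $L/\sol{L}$ is quasisimple. Once Lemma~\ref{lglob:7} gives that $\op{\fancyP}{H}\layer{\fancyP}{H}$ normalizes every $L \in \comp{A}{G}$ and each of its components, triviality on $\compsol{G}$ follows, and Lemma~\ref{lglob:6}(b) then supplies $\widetilde{K}$. Your uniqueness argument via Lemma~\ref{ap:4}(f) is fine.
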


\noindent At the end of this section,
examples will be constructed to show that the additional hypothesis in (b)
is needed.
It would be interesting to investigate if (a) holds without the solvability hypothesis.
Two lemmas are required for the proof of Theorem~\ref{lglob:5}.

\begin{Lemma}\label{lglob:6}
    Assume Hypothesis~\ref{lglob:1},
    that $K \in \comp{A,\fancyP}{H}$ and that
    \begin{enumerate}
        \item[(a)]  $K \leq \layerr{G}$, or
        \item[(b)]  $\sol{G} = 1$ and $K$ acts trivially on $\compp{G}$.
    \end{enumerate}
    Then there exists a unique $\widetilde{K} \in \comp{A}{G}$
    with $K \leq \widetilde{K}$.
\end{Lemma}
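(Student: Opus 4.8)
The plan is to dispose of uniqueness first, then reduce case~(b) to case~(a), then locate $K$ inside $\layerr{G}$, and finally show that $K$ meets only one $A$-orbit of components of $G$; the last step is the substantial one. For uniqueness, note that by Lemma~\ref{ap:4}(c) the group $K$ is perfect and nontrivial. If $\widetilde{K}_{1}\neq\widetilde{K}_{2}$ were $A$-components of $G$ with $K\leq\widetilde{K}_{i}$, then, being generated by distinct $A$-orbits of components of $G$, $\widetilde{K}_{1}$ and $\widetilde{K}_{2}$ would centralise one another, so $K\leq\widetilde{K}_{1}\cap\widetilde{K}_{2}\leq\zz{\layerr{G}}$ and hence $K=K'=1$, a contradiction.

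Next I would reduce to the case $K\leq\layerr{G}$. In case~(a) this is given. Assume~(b), so $\sol{G}=1$; then $\gfitt{G}=\layerr{G}$, $\zz{\layerr{G}}=1$, and $\cz{G}{\layerr{G}}=1$. Since $K$ normalises every component $L$ of $G$ and $\out{L}$ is solvable (Schreier's conjecture holds for the simple $K$-group $L$), the perfect group $K$ induces only inner automorphisms on each $L$; assembling these, and using $\cz{G}{\layerr{G}}=1$, yields $K\leq\layerr{G}$. So we may assume $K\leq\layerr{G}$. Then $K$ normalises every component, so the set $\mathcal{L}$ of components $L$ of $G$ with $[K,L]\neq1$ is $A$-invariant (because $K$ is). Let $M$ be the product of the components not in $\mathcal{L}$; then $[K,M]=1$, $\layerr{G}=\listgen{\mathcal{L}}M$ is a central product, and $K\leq\cz{\layerr{G}}{M}=\listgen{\mathcal{L}}\,\zz{M}$, whence $K=K'\leq\listgen{\mathcal{L}}$. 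It therefore suffices to prove that $\mathcal{L}$ is a single $A$-orbit, for then $\widetilde{K}:=\listgen{\mathcal{L}}\in\comp{A}{G}$ and $K\leq\widetilde{K}$.

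To prove $\mathcal{L}$ is one $A$-orbit, suppose not. Since $\listgen{\mathcal{L}}\normal\layerr{G}\normal G$ is $A$-invariant, Lemma~\ref{ap:4}(a),(b) shows that $K$ is an $(A,\fancyP)$-component of the $A\cz{\listgen{\mathcal{L}}}{A}$-invariant subgroup $H\cap\listgen{\mathcal{L}}$, and that every $A$-component of $\listgen{\mathcal{L}}$ is an $A$-component of $G$. Replacing $G$ by $\listgen{\mathcal{L}}$ and $H$ by $H\cap\listgen{\mathcal{L}}$, we may assume $G=\layerr{G}$, that $K$ centralises no component of $G$, and that the components of $G$ lie in at least two $A$-orbits. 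In this configuration $K$ is $A$-invariant and subnormal in the $A\cz{G}{A}$-invariant subgroup $H$, and is noncentral on every component while those components split into several $A$-orbits; the assertion is that no such $K$ exists. The natural way to derive the contradiction is via the underdiagonal/overdiagonal dichotomy of \ICiteSectionAQS, exactly as in the proof of Theorem~\ref{p:2}: subnormality of $K$ in an $A\cz{G}{A}$-invariant subgroup should force $K$ to be underdiagonal, whereas being noncentral on components lying in several $A$-orbits makes it overdiagonal. Concretely one would argue by a minimal counterexample in $\card{G}+\card{A}$, picking off one $A$-orbit $\mathcal{O}$ of components at a time: with $N=\listgen{\mathcal{O}}\normal G$ and $N'$ the product of the remaining components, Lemma~\ref{ap:4}(d) applied inside $H$ gives $[K,N\cap H]\leq\op{\fancyP}{K}$ and $[K,N'\cap H]\leq\op{\fancyP}{K}$ (the alternative $K\leq N\cap H$ being excluded since it would confine $\mathcal{L}$ to $\mathcal{O}$), and one invokes \ICiteLemmaPsix, \ICiteLemmaAQSeightB\ and \ICiteTheoremLone\ to descend to the $A$-simple case.

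The main obstacle is precisely this last step: excluding a ``diagonally straddling'' $K$. The difficulty is that $H$ need not respect the direct decomposition of $\layerr{G}$ along $A$-orbits, so one cannot simply conclude $K\leq(N\cap H)(N'\cap H)$ and split off $A$-orbits by Lemma~\ref{ap:4}(g). Resolving this is exactly where all three parts of the hypothesis---$K\leq\layerr{G}$ (or $\sol{G}=1$ with $K$ trivial on $\compp{G}$), $K$ subnormal in $H$, and $H$ being $A\cz{G}{A}$-invariant (which is what allows one to spread $K$ out using $\cz{G}{A}$)---are needed, and where one must transport to the present semisimple setting the diagonal-subgroup theory of \ICiteSectionAQS\ and \ICiteTheoremLone\ already developed in \cite{I}.
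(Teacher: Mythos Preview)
Your uniqueness argument and the reduction of case~(b) to case~(a) via Schreier are fine and match the paper. The problem is the ``substantial step'': you set yourself up for a hard fight with the underdiagonal/overdiagonal machinery and a minimal counterexample, and you yourself flag that you cannot close the argument. In fact no such machinery is needed; the paper's proof of~(a) is short and direct, and the idea you are missing is the following.

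First, replace $H$ by $H\cz{G}{A}$: since $H$ is $\cz{G}{A}$-invariant, $H\normal H\cz{G}{A}$ and hence $K\in\comp{A,\fancyP}{H\cz{G}{A}}$. So one may assume $\cz{G}{A}\leq H$. Now suppose $K$ lies in no $A$-component of $G$. For each $L\in\comp{A}{G}$ we have $L\cap H\subnormal H$ and $K\not\leq L$, so Lemma~\ref{ap:4}(d) gives $[K,L\cap H]\leq\op{\fancyP}{K}$. Because $\cz{G}{A}\leq H$, in particular $\cz{L}{A}\leq L\cap H$, whence $[K,\cz{L}{A}]\leq\op{\fancyP}{K}$ for every $L$. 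As $\cz{\layerr{G}}{A}$ is the product of these $\cz{L}{A}$, we get $[K,\cz{\layerr{G}}{A}]\leq\op{\fancyP}{K}$; since $K\leq\layerr{G}$, this yields $[K,\cz{K}{A}]\leq\op{\fancyP}{K}$. Thus $\op{\fancyP}{K}\cz{K}{A}\normal K$ and, by Coprime Action, $A$ acts fixed-point-freely on $K/\op{\fancyP}{K}\cz{K}{A}$, which is therefore solvable by \ICiteTheoremKfour. As $K$ is perfect, $K=\op{\fancyP}{K}\cz{K}{A}$, and then $[K,\cz{K}{A}]\leq\op{\fancyP}{K}$ forces $K/\op{\fancyP}{K}$ to be abelian, contradicting that it is $A$-quasisimple.

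So the gap in your proposal is twofold: you never make the reduction $\cz{G}{A}\leq H$ (you allude to ``spreading $K$ out using $\cz{G}{A}$'' but do not do it), and you try to exclude a diagonally straddling $K$ by structural means instead of the much simpler observation that $[K,\cz{K}{A}]\leq\op{\fancyP}{K}$ already kills $K$ via fixed-point-free coprime action. Your route through $\mathcal{L}$, minimal counterexamples, and \ICiteLemmaAQSeightB\ is not wrong in spirit, but it is unnecessary and, as written, incomplete.
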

\begin{proof}
    Uniqueness is clear since distinct elements of $\comp{A}{G}$
    have solvable intersection.
    Note that since $H$ is $\cz{G}{A}$-invariant we have
    $K \in \comp{A,\fancyP}{H\cz{G}{A}}$.
    Hence we may assume that $\cz{G}{A} \leq H$.

    Suppose (a) holds.
    Assume the conclusion to be false.
    Let $L \in \comp{A}{G}$.
    Now $L \cap H \subnormal H$ and $K \not\leq L$ so
    Lemma~\ref{ap:4}(d) implies $[K,L \cap H] \leq \op{\fancyP}{K}$.
    As $\cz{L}{A} \leq H$ we have $[K,\cz{L}{A}] \leq \op{\fancyP}{K}$
    and $\cz{L}{A}$ normalizes $K$.
    Since $\cz{\layerr{G}}{A}$ is the product of the subgroups $\cz{L}{A}$
    as $L$ ranges over $\comp{A}{G}$,
    it follows that $[K,\cz{\layerr{G}}{A}] \leq \op{\fancyP}{K}$.

    By hypothesis, $K \leq \layerr{G}$ so $[K,\cz{K}{A}] \leq \op{\fancyP}{K}$.
    Hence $\op{\fancyP}{K}\cz{K}{A} \normal K$ and Coprime Action
    implies that $A$ is fixed point free on $K/\op{\fancyP}{K}\cz{K}{A}$.
    This quotient is therefore solvable by \ICiteTheoremKfour.
    Since $K$ is perfect,
    it follows that $K = \op{\fancyP}{K}\cz{K}{A}$.
    But $[K,\cz{K}{A}] \leq \op{\fancyP}{K}$ so $K/\op{\fancyP}{K}$
    is abelian.
    This contradiction completes the proof.

    Suppose (b) holds.
    Let $N$ be the intersection of the normalizers of the components of $G$.
    Since $\sol{G} = 1$ we have $\cz{G}{\layerr{G}} = 1$ and then the
    Schreier Property implies that $N/\layerr{G}$ is solvable.
    Now $K$ is perfect and $K \leq N$ whence $K \leq \layerr{G}$.
    Apply (a).
\end{proof}

\begin{Lemma}\label{lglob:7}
    Assume Hypothesis~\ref{lglob:1} and that $L \in \comp{A}{G}$.
    Then:
    \begin{enumerate}
        \item[(a)]  $\op{\fancyP}{H}\layer{\fancyP}{H}$ normalizes $L$
                    and every component of $L$; or

        \item[(b)]  $\cz{L/\zz{L}}{A}$ is a $\fancyP$-group.
    \end{enumerate}
\end{Lemma}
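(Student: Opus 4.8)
The plan is to argue by contradiction, assuming that neither alternative holds, and then to extract enough information to contradict the perfectness of $K/\op{\fancyP}{K}$ for some $(A,\fancyP)$-component $K$ of $H$. First I would reduce to the situation where $\cz{G}{A} \leq H$: since $H$ is assumed only $A\cz{G}{A}$-invariant, one replaces $H$ by $H\cz{G}{A}$, noting as in the proof of Lemma~\ref{lglob:6} that $\comp{A,\fancyP}{H} \subseteq \comp{A,\fancyP}{H\cz{G}{A}}$ and that $\layer{\fancyP}{H} \leq \layer{\fancyP}{H\cz{G}{A}}$, so it suffices to prove the claim for the larger subgroup. The core observation is that $L \cap H$ is an $A$-invariant subnormal subgroup of $H$ (as $L \subnormal G$, $L \cap H \subnormal H$), so Lemma~\ref{ap:4}(d) applies: for each $K \in \comp{A,\fancyP}{H}$ either $K \leq L \cap H \leq L$, or $[K, L \cap H] \leq \op{\fancyP}{K}$. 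In the first case $K$ certainly normalizes $L$; more work is needed to see it normalizes every component of $L$, but since $K \leq L$ and $K$ is $A$-invariant, $K$ permutes the components of $L$ and — using that $\op{\fancyP}{K}$ is solvable-bounded only under the extra hypotheses — one would want instead to run the genuinely hard case directly.

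So suppose (a) fails. Then some $(A,\fancyP)$-component $K$ of $H$, or some component of $\op{\fancyP}{H}$, fails to normalize $L$ or one of its components. I would treat the $\fancyP$-component case, the $\op{\fancyP}{H}$ case being analogous and easier. By the dichotomy above, if $K \not\leq L$ then $[K, L\cap H] \leq \op{\fancyP}{K}$; in particular, since $\cz{L}{A} \leq \cz{G}{A} \leq H$ and $\cz{L}{A} \leq L$, we get $\cz{L}{A} \leq L \cap H$, hence $[K, \cz{L}{A}] \leq \op{\fancyP}{K}$ and $\cz{L}{A}$ normalizes $K$. Now the key point: $\cz{L}{A}$ modulo $\cz{\zz{L}}{A}$ is essentially $\cz{L/\zz{L}}{A}$ (by coprime action, \ICiteLemmaPsix-type arguments, the map $\cz{L}{A} \to \cz{L/\zz{L}}{A}$ is onto with kernel $\cz{\zz{L}}{A}$, which is central in $L$ and solvable). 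Since $[K,\cz{L}{A}] \leq \op{\fancyP}{K}$ with $\op{\fancyP}{K}$ a $\fancyP$-group, and $K/\op{\fancyP}{K}$ is $A$-quasisimple, I would deduce that $\cz{L}{A}$ acts on $K/\op{\fancyP}{K}$ as inner-by-nothing, i.e. centralizes it; combined with the structure of $K$ acting back on $L$ this should force, via the Three Subgroups Lemma and perfectness of $K/\op{\fancyP}{K}$, that $K$ normalizes $L$ after all — contradiction — unless the obstruction lives entirely inside where $\cz{L/\zz{L}}{A}$ is forced to be a $\fancyP$-group, which is exactly conclusion (b).

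The cleanest route, and the one I would actually write, mirrors Lemma~\ref{prel:2}: set $M = \listgen{\cz{L}{A}^{K}} = [K,\cz{L}{A}]\cz{L}{A}$. Since $[K,\cz{L}{A}] \leq \op{\fancyP}{K}$ is a $\fancyP$-group and $\cz{G}{A}$ normalizes $M$, Lemma~\ref{prel:2} (applied with $P = L$, or rather with the roles arranged so that $K$ plays the role of the $A$-component and $L$ the $A$-invariant subgroup $P$) yields directly that either $L$ normalizes $K$ — which by Lemma~\ref{ap:4}(f)-type reasoning propagates to $\layer{\fancyP}{H}$ normalizing $L$ and its components — or $\cz{L/\zz{L}}{A}$ is a $\fancyP$-group, giving (b). I expect the main obstacle to be the bookkeeping in the "(a) holds" branch: upgrading "$\layer{\fancyP}{H}$ normalizes $L$" to "normalizes every component of $L$". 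For that I would use that $H\cz{G}{A}$ normalizes the set of components of $L$ (they are permuted by $\n{G}{L}$, and $A$-invariantly so), that each $K \in \comp{A,\fancyP}{H}$ is perfect modulo a $\fancyP$-group, hence cannot act as a nontrivial permutation group on the $A$-orbits of components of $L$ unless that permutation action is itself a $\fancyP$-group quotient of $K$ — impossible since $K = \oupper{\fancyP}{K}$ — and then an argument analogous to Lemma~\ref{pc:4}(g)/(d) closes it. The solvability-free statement here is what makes this subtler than the later results; I would flag that the permutation-action argument is where all the weight sits.
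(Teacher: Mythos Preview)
Your overall strategy matches the paper's: use the subnormality of $L\cap H$ in $H$ (or in $H\cz{G}{A}$) together with Lemma~\ref{ap:4}(d) to get $[K,\cz{L}{A}]\leq\op{\fancyP}{K}$, and then feed this into Lemma~\ref{prel:2}. However, you have the roles in Lemma~\ref{prel:2} reversed. The $A$-component in that lemma must be an honest member of $\comp{A}{G}$; that is $L$, not your $K$ (which is only an $(A,\fancyP)$-component of $H$). The correct assignment is $P\leftarrow K$ and (Lemma's $K$)$\leftarrow L$: you have $\cz{L}{A}\leq\n{G}{K}$ and $[K,\cz{L}{A}]$ a $\fancyP$-group, so the conclusion is $K\leq\n{G}{L}$ or $\cz{L/\zz{L}}{A}$ is a $\fancyP$-group. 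Your stated conclusion ``$L$ normalizes $K$'' is the wrong direction and would not help. The paper runs exactly this argument, and handles the $\op{\fancyP}{H}$ case the same way with $P\leftarrow\op{\fancyP}{H}$, using simply that $\cz{L}{A}\leq\n{G}{H}$ forces $[\op{\fancyP}{H},\cz{L}{A}]\leq\op{\fancyP}{H}$.

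Your proposed argument for the ``upgrade'' from normalizing $L$ to normalizing each component of $L$ does not work: there is no reason the permutation image of $K$ on $\compp{L}$ should be a $\fancyP$-group, so $K=\oupper{\fancyP}{K}$ gives you nothing. The paper's argument here is both different and much easier, and it is where the coprimeness hypothesis is actually used. Once $\op{\fancyP}{H}\layer{\fancyP}{H}\leq\n{G}{L}$, the group $A\,\op{\fancyP}{H}\layer{\fancyP}{H}$ acts on $\compp{L}$ with $A$ transitive (since $L$ is $A$-quasisimple) and with $\op{\fancyP}{H}\layer{\fancyP}{H}$ a normal Hall $r'$-subgroup. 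Its orbits are therefore blocks for $A$, all of the same $r'$-size, dividing the $r$-power $\card{\compp{L}}$; hence all orbits are singletons. This is \ICiteLemmaPeight. So the step you flagged as carrying ``all the weight'' is in fact a one-line coprimeness argument, not a perfectness argument.
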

\begin{proof}
    Suppose that $\op{\fancyP}{H}\layer{\fancyP}{H} \leq \n{G}{L}$.
    Then $A\op{\fancyP}{H}\layer{\fancyP}{H}$ permutes $\compp{L}$.
    Since $A$ is transitive and $\op{\fancyP}{H}\layer{\fancyP}{H}$ is a normal Hall-subgroup
    of $A\op{\fancyP}{H}\layer{\fancyP}{H}$,
    \ICiteLemmaPeight\ implies that $\op{\fancyP}{H}\layer{\fancyP}{H}$
    acts trivially.
    Then (a) holds.

    Suppose that $\op{\fancyP}{H} \not\leq \n{G}{L}$.
    Now $\cz{L}{A} \leq \n{G}{H}$ so $[\op{\fancyP}{H},\cz{L}{A}] \leq \op{\fancyP}{H}$.
    In particular, the commutator is a $\fancyP$-group.
    Lemma~\ref{prel:2} implies that $\cz{L/\zz{L}}{A}$ is a $\fancyP$-group.

    Suppose that $\layer{\fancyP}{H} \not\leq \n{G}{L}$.
    Choose $K \in \comp{A,\fancyP}{H}$ with $K \not\leq \n{G}{L}$.
    Set $H_{0} = H\cz{G}{A}$ so $H \normal H_{0}$ and $K \in \comp{A,\fancyP}{H_{0}}$.
    Now $L \cap H_{0} \subnormal H_{0}$ so Lemma~\ref{pc:4}(d) implies $K \leq L \cap H_{0}$
    or $[K,L\cap H_{0}] \leq \op{\fancyP}{K}$.
    The first possibility does not hold since $K \not\leq \n{G}{L}$.
    Moreover, $\cz{L}{A} \leq L \cap H_{0}$ so we deduce that $[K,\cz{L}{A}]$ is a $\fancyP$-group.
    Lemma~\ref{prel:2},
    with $K$ and $L$ in the roles of $P$ and $K$ respectively,
    implies that $\cz{L/\zz{L}}{A}$ is a $\fancyP$-group.
\end{proof}

\begin{proof}[Proof of Theorem~\ref{lglob:5}]
    (a). Now $K \in \comp{A,\fancyP}{\cz{G}{a}H}$ because $\cz{G}{a}$ normalizes $H$.
    Hence we may assume that $\cz{G}{a} \leq H$.
    Now $K = K_{1} * \cdots * K_{n}$ where $K_{1},\ldots,K_{n}$ are the
    $\listgen{a}$-components of $K$.
    As $K = [K,a]$ it follows that $K_{i} = [K_{i},a]$ for each $i$.
    If $K_{i} \subnormal G$ for each $i$ then $K \subnormal G$ and then
    $K \in \comp{A,\fancyP}{G}$.
    Hence, as Hypothesis~\ref{lglob:2} remains valid if $A$ is replaced by
    $\listgen{a}$, we may assume that $A = \listgen{a}$.

    Consider first the case that $\sol{G} = 1$.
    Assume that $K$ acts nontrivially on $\compp{G}$.
    Then the set \[
        \mathcal C = \set{L_{0} \in \compp{G}}{ K \not\leq \n{G}{L_{0}} }
    \]
    is nonempty.
    Since $A$ normalizes $K$ it follows that $A$ acts on $\mathcal C$.
    Now $K = [K,a]$ so it follows also that $a$ acts nontrivially.
    Choose $L_{0} \in \mathcal C$ with $L_{0} \not= L_{0}^{a}$.
    Set $L = \listgen{L_{0}^{A}} \in \comp{A}{G}$.
    Now $L_{0}$ is simple because $\sol{G} = 1$ so as $A = \listgen{a}$
    we see that $L$ is the direct product of $r$ copies of $L_{0}$
    and then that $\cz{L}{A}  \isomorphic L_{0}$.
    By hypothesis, every $\fancyP$-group is solvable,
    so $\cz{L}{A}$ is not \ $\fancyP$-group.
    Lemma~\ref{lglob:7} implies that $K$ normalizes $L$ and $L_{0}$,
    a contradiction.
    We deduce that $K$ acts trivially on $\compp{G}$.

    Lemma~\ref{lglob:6} implies there exists $\widetilde{K}$ with
    $K \leq \widetilde{K} \in \comp{A}{G}$.
    Now $\cz{\widetilde{K}}{a} \leq H \cap \widetilde{K}$
    and $K = [K,a] \leq H \cap \widetilde{K}$
    so $\cz{\widetilde{K}}{a} < H \cap \widetilde{K}$.
    \ICiteCorollaryAQSnine\ implies $H \cap \widetilde{K} = \widetilde{K}$.
    Then $K \subnormal \widetilde{K}$.
    Since $\widetilde{K}$ is $A$-simple we obtain $K = \widetilde{K}$
    and so $K \subnormal G$ as desired.

    Returning now to the general case,
    set $S = \sol{G}$.
    Applying the previous argument to $G/S$,
    we obtain $KS \subnormal G$.
    Now $\cz{S}{a} \leq S \cap H \leq \sol{H}$ so Lemma~\ref{pc:4}(i)
    implies $\cz{S}{a} \leq \n{G}{K}$.
    Then \ICiteLemmaGthree\ implies $S \leq \n{G}{K}$
    whence $K \subnormal G$ and $K \in \comp{A,\fancyP}{G}$ as required.

    (b). As in (a) we may suppose that $\cz{G}{A} \leq H$.
    Consider first the case that $\sol{G} = 1$.
    If $L \in \comp{A}{G}$ then $L$ is nonsolvable so by hypothesis,
    $\cz{L}{A}$ is not a $\fancyP$-group.
    Lemma~\ref{lglob:7} implies that $\op{\fancyP}{H}\layer{\fancyP}{H}$
    normalizes $L$ and every component of $L$.
    Since every component of $G$ is contained in an $A$-component of $G$,
    it follows that $\op{\fancyP}{H}\layer{\fancyP}{H}$ acts trivially on $\compp{G}$.
    Lemma~\ref{lglob:6}(b) implies there exists $\widetilde{K}$
    with $K \leq \widetilde{K} \in \comp{A}{G}$.

    Returning to the general case,
    set $S = \sol{G}$ and $\br{G} = G/S$.
    The map $X \mapsto \br{X}$ is a bijection $\compsol{G} \longrightarrow \compp{G}$.
    It follows that $\op{\fancyP}{H}\layer{\fancyP}{H}$ acts trivially on $\compsol{G}$.
    By the previous paragraph and Lemma~\ref{ap:4}(j) there exists $M \in \compasol{G}$
    with $\br{K} \leq \br{M}$.
    Then $K \leq M\sol{G}$.
    Lemma~\ref{ap:4}(i) implies $M \normal M\sol{G}$ whence
    $K = K^{\infty} \leq (M\sol{G})^{\infty} = M$ and the proof is complete.
\end{proof}

We close this section with a corollary and an example.
In what follows, \emph{nil} is an abbreviation for the group theoretic
property ``is nilpotent''.

\begin{Corollary}\label{lglob:8}
    Let $A$ be an elementary abelian $r$-group that acts coprimely
    on the $K$-group $G$.
    Let $a \in A\nonid$ and suppose that $H$ is an $A\cz{G}{a}$-invariant
    subgroup of $G$.
    \begin{enumerate}
        \item[(a)]  Let $K \in \comp{A}{H}$.
                    Then there exists $\widetilde{K}$ with \[
                        K \leq \widetilde{K} \in \comp{A,\mathrm{nil}}{G}.
                    \]

        \item[(b)]  Let $K \in \comp{A,\mathrm{nil}}{H}$.
                    Then there exists $\widetilde{K}$ with \[
                        K \leq \widetilde{K} \in \compasol{G}.
                    \]
    \end{enumerate}
\end{Corollary}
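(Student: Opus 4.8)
The plan is to derive both parts from Theorem~\ref{lglob:5} with $\fancyP$ the property ``is nilpotent''. Two preliminary remarks make this legitimate. First, since $a \in A\nonid$ we have $\cz{G}{A} \le \cz{G}{a}$, so any $A\cz{G}{a}$-invariant subgroup is $A\cz{G}{A}$-invariant and Hypothesis~\ref{lglob:1} holds. Second, by Lemma~\ref{lglob:4}(a) the properties ``is trivial'' and ``is nilpotent'' satisfy Hypothesis~\ref{lglob:3}, and every nilpotent group is solvable; hence Hypotheses~\ref{lglob:2} and \ref{lglob:3} are available with $\fancyP = \mathrm{nil}$. For (b) this is immediate: applying Theorem~\ref{lglob:5}(b) with $\fancyP = \mathrm{nil}$ to $K \in \comp{A,\mathrm{nil}}{H}$ produces a unique $\widetilde{K} \in \compasol{G}$ with $K \le \widetilde{K}$, which is exactly the assertion.

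For (a) the first step is the inclusion $\comp{A}{H} \subseteq \comp{A,\mathrm{nil}}{H}$: if $K \in \comp{A}{H}$ then $K$ is perfect, so $K = \op{\mathrm{nil}}{K}$-residual in the sense that $K$ has no nontrivial nilpotent quotient, and $K/\zz{K}$ is a direct product of nonabelian simple groups, so $\op{\mathrm{nil}}{K} = \zz{K}$ and $K/\op{\mathrm{nil}}{K} = K/\zz{K}$ is $A$-quasisimple; thus $K \in \comp{A,\mathrm{nil}}{H}$. Since Theorem~\ref{lglob:5}(b) only places $K$ inside an $(A,\mathrm{sol})$-component of $G$, to land in an $(A,\mathrm{nil})$-component one wants Theorem~\ref{lglob:5}(a), which preserves the property $\fancyP = \mathrm{nil}$ but demands $K = [K,a]$. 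To reach that hypothesis, write $K = K_{1} * \cdots * K_{n}$ for the $\listgen{a}$-components $K_{i}$ of $K$. Because $K/\zz{K}$ is $A$-quasisimple and $\listgen{a} \normal A$, the group $A$ permutes $\listset{K_{1},\ldots,K_{n}}$ transitively, so all $K_{i}$ carry the same $\listgen{a}$-action up to conjugacy; by coprime action and perfectness of the $K_{i}$, either $[K_{i},a] = K_{i}$ for all $i$, whence $K = [K,a]$, or $a$ centralizes every $K_{i}$, whence $K \le \cz{G}{a}$.

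In the first case Theorem~\ref{lglob:5}(a), applied with $\fancyP = \mathrm{nil}$, gives $K \in \comp{A,\mathrm{nil}}{G}$, so $\widetilde{K} = K$ works. In the second case $K \le \cz{G}{a}$, and the plan is to induct on $\card{A}$: replace $A$ by $A/\listgen{a}$ (which acts coprimely on the $K$-group $\cz{G}{a}$, with $a$ in its kernel) and $G$ by $\cz{G}{a}$, noting that $K \subnormal H \cap \cz{G}{a} \normal \cz{G}{a}$, so $K \in \comp{A/\listgen{a}}{H \cap \cz{G}{a}}$ with $H \cap \cz{G}{a}$ suitably invariant. One must then transfer the conclusion back to $G$, and here it is cleanest to pass to $\br{G} = G/\op{\mathrm{nil}}{G} = G/\ff{G}$: by Lemma~\ref{ap:4}(j) (with $\fancyP = \mathrm{nil}$, using that every nilpotent group is solvable) the map $K' \mapsto \br{K'}$ is a bijection $\comp{A,\mathrm{nil}}{G} \longrightarrow \comp{A}{\br{G}}$, so it suffices to place the image $\br{K}$ inside an \emph{honest} $A$-component of $\br{G}$; since $K \cap \ff{G} \le \ff{K} = \zz{K}$, $\br{K}$ is again an honest $A$-component of $\br{H} = H\ff{G}/\ff{G}$, and Lemmas~\ref{lglob:6} and \ref{lglob:7} (used as in the proof of Theorem~\ref{lglob:5}, after a further reduction modulo $\sol{\br{G}}$ when $\sol{\br{G}} \ne 1$) locate the desired $\widetilde{\br{K}}$. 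Lifting $\br{K} \le \widetilde{\br{K}}$ back through $\ff{G}$, using $K = K^{\infty}$ and that $\widetilde{K}\ff{G}/\widetilde{K}$ is nilpotent, then gives $\widetilde{K} \in \comp{A,\mathrm{nil}}{G}$ with $K \le \widetilde{K}$.

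The main obstacle is exactly the bookkeeping in (a) that guarantees the final $\widetilde{K}$ is an $(A,\mathrm{nil})$-component of $G$ and not merely an $(A,\mathrm{sol})$-component. This is where it is essential that $K$ is an honest $A$-component (so $\op{\mathrm{nil}}{K} = \zz{K}$ and the passage to $\br{G} = G/\ff{G}$ loses nothing), and where one needs the precise output of Lemma~\ref{lglob:6}, which produces honest $A$-components of the ambient group. Handling the case $K \le \cz{G}{a}$ cleanly --- stitching the induction on $\card{A}$ together with the descent to $\cz{G}{a}$ and the Fitting quotient so that the component survives at each stage --- is the delicate point; everything else is a routine matching of hypotheses against Theorem~\ref{lglob:5}.
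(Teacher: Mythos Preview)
Your treatment of (b) is correct and matches the paper's argument: once Lemma~\ref{lglob:4}(a) verifies Hypothesis~\ref{lglob:3} for $\fancyP = \mathrm{nil}$, Theorem~\ref{lglob:5}(b) applied to $K \in \comp{A,\mathrm{nil}}{H}$ yields the required $\widetilde{K} \in \compasol{G}$. (The paper's proof cites Theorem~\ref{lglob:5}(a) here, which appears to be a slip for (b); your reading is the right one.)

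For (a) the paper does not argue internally at all: it simply invokes \ICiteTheoremLone, a result from the companion paper which directly delivers $K \leq \widetilde{K} \in \comp{A,\mathrm{nil}}{G}$ for any $A$-component $K$ of an $A\cz{G}{a}$-invariant subgroup. Your attempt to derive (a) from the machinery of \S\ref{lglob} is therefore a genuinely different route, and the case $K = [K,a]$ is handled correctly via Theorem~\ref{lglob:5}(a) with $\fancyP = \mathrm{nil}$.

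The problem is the second case, $K \leq \cz{G}{a}$. Your plan is to pass to $\br{G} = G/\ff{G}$, use the bijection $\comp{A,\mathrm{nil}}{G} \to \comp{A}{\br{G}}$ of Lemma~\ref{ap:4}(j), and then locate $\br{K}$ inside an honest $A$-component of $\br{G}$ by ``a further reduction modulo $\sol{\br{G}}$'' followed by Lemmas~\ref{lglob:6} and~\ref{lglob:7}. But that further reduction takes you to $\br{G}/\sol{\br{G}} \isomorphic G/\sol{G}$, and Lemma~\ref{lglob:6} there produces an $A$-component of $G/\sol{G}$; lifting that back gives an element of $\compasol{\br{G}}$, not of $\comp{A}{\br{G}}$. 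In other words, the machinery of \S\ref{lglob} only ever sees things modulo $\sol{}$, so what you recover in $G$ is an $(A,\mathrm{sol})$-component containing $K$ --- exactly the output of Theorem~\ref{lglob:5}(b), and strictly weaker than the $(A,\mathrm{nil})$-component promised by (a). The gap between $\compasol{G}$ and $\comp{A,\mathrm{nil}}{G}$ is real (an $(A,\mathrm{sol})$-component can have $\sol{L}$ strictly larger than $\ff{L}$), and nothing in your sketch bridges it. The induction on $\card{A}$ via $\cz{G}{a}$ does not help either, since it only places $K$ inside $\comp{A,\mathrm{nil}}{\cz{G}{a}}$, and transferring from there to $\comp{A,\mathrm{nil}}{G}$ is precisely the missing step. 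This is why the paper appeals to the external result \ICiteTheoremLone\ rather than to Theorem~\ref{lglob:5}.
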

\begin{proof}
    (a) follows from \ICiteTheoremLone\ and (b) follows from
    Lemma~\ref{lglob:4} and Theorem~\ref{lglob:5}(a).
\end{proof}

\noindent The following example shows that the corollary cannot be extended
further and that the restriction on $\fancyP$ in Theorem~\ref{lglob:5}(b)
is needed.

\begin{Example}
    Let $r$ be a prime and $J_{1}$ be a simple $r'$-group that admits
    an automorphism $a_{1}$ of order $r$ with $\cz{J_{1}}{a_{1}}$ solvable.
    For example $\ltwo{2^{r}}$ or $\sz{2^{r}}$ with $r > 5$.
    Let $K$ be a simple group with order $n$.
    Then $K$ acts on the direct product \[
        D = J_{1}\listgen{a_{1}} \times J_{2}\listgen{a_{2}} %
        \times\cdots\times J_{n}\listgen{a_{n}}
    \]
    permuting the direct factors regularly.
    Set $a = a_{1}a_{2}\cdots a_{n} \in \cz{D}{K}$
    and put $A = \listgen{a}$.
    Let $G$ be the semidirect product \[
        G = (J_{1} \times\cdots\times J_{n}) \rtimes K.
    \]
    Then $a$ acts on $G$ with
    $\cz{G}{a} = (\cz{J_{1}}{a_{1}} \times\cdots\times \cz{J_{n}}{a_{n}})K$.
    Observe that $K$ is contained in an $(A,\mbox{sol})$-component of $\cz{G}{a}$.
    However, the $(A,\mbox{sol})$-components of $G$ are $J_{1},\ldots,J_{n}$,
    none of which contain $K$.
\end{Example}

\section{The McBride Dichotomy}\label{md}
In this section we give an application of \S\ref{nap}
to the study of Signalizer Functors.
No originality is claimed,
the results being a presentation of McBride's work \cite[Theorem~6.6]{McB1}.
They culminate in a fundamental dichotomy in the
proof of the Nonsolvable Signalizer Functor Theorem.

Throughout this section, we assume the following.
\begin{Hypothesis}\label{md:1}\mbox{}
    \begin{enumerate}
        \item[(a)]  $\fancyP$ is a group theoretic property that
                    is closed under subgroups, quotients and extensions.

        \item[(b)]  Every solvable group is a $\fancyP$-group.
    \end{enumerate}
\end{Hypothesis}
\noindent We will be interested in the subgroup $\op{\fancyP,E}{H}$
for groups $H$.
Note that \[
    \op{\fancyP,E}{H} = \op{\fancyP}{H}\layer{\fancyP}{H}.
\]
\begin{Lemma}\label{md:2}
    Let $G$ be a group.
    \begin{enumerate}
        \item[(a)]  Suppose $H \leq G$ and $\op{\fancyP,E}{G} \leq H$.
                    Then $\op{\fancyP,E}{G}= \op{\fancyP,E}{H}$.

        \item[(b)]  Suppose $H,M \leq G$, $\op{\fancyP,E}{H} \leq M$
                    and $\op{\fancyP,E}{M} \leq H$.
                    Then $\op{\fancyP,E}{H} = \op{\fancyP,E}{M}$.
    \end{enumerate}
\end{Lemma}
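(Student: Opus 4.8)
The plan is to prove (a) in full and then read (b) off from it. Granting (a): since $\op{\fancyP,E}{H}\leq M$ by hypothesis and $\op{\fancyP,E}{H}\leq H$ trivially, we have $\op{\fancyP,E}{H}\leq H\cap M\leq H$, so (a), applied with $H$ in the role of ``$G$'' and $H\cap M$ in the role of ``$H$'', gives $\op{\fancyP,E}{H\cap M}=\op{\fancyP,E}{H}$. Symmetrically, from $\op{\fancyP,E}{M}\leq H$ and $\op{\fancyP,E}{M}\leq M$ we get $\op{\fancyP,E}{H\cap M}=\op{\fancyP,E}{M}$, whence (b). (Note that Hypothesis~\ref{md:1} forces Hypothesis~\ref{pc:1}, so Lemmas~\ref{pc:3} and \ref{pc:4} are available; recall also $\op{\fancyP,E}{X}=\op{\fancyP}{X}\layer{\fancyP}{X}$.)

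For (a), assume $\op{\fancyP,E}{G}\leq H\leq G$. The inclusion $\op{\fancyP,E}{G}\leq\op{\fancyP,E}{H}$ is routine: $\op{\fancyP}{G}\normal G$ is a $\fancyP$-group lying in $H$, so $\op{\fancyP}{G}\leq\op{\fancyP}{H}$; and each $K\in\comp{\fancyP}{G}$ lies in $\layer{\fancyP}{G}\leq\op{\fancyP,E}{G}\leq H$, hence $K\in\comp{\fancyP}{H}$ by Lemma~\ref{pc:4}(b), so $\layer{\fancyP}{G}\leq\layer{\fancyP}{H}$. The substance is the reverse inclusion, and for it I would pass to $\br{G}=G/\op{\fancyP}{G}$. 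Extension-closedness gives $\op{\fancyP}{\br{G}}=1$; since every solvable group is a $\fancyP$-group this yields $\sol{\br{G}}=1$, hence $\gfitt{\br{G}}=\layerr{\br{G}}$. A preliminary step, using Lemma~\ref{pc:3}(a),(b) and Lemma~\ref{pc:4}(e),(g) together with extension-closedness, is that $K\mapsto\br{K}$ maps $\comp{\fancyP}{G}$ bijectively onto $\compp{\br{G}}$ (surjectivity: lift a component of $\br{G}$ to a subnormal subgroup of $G$ and apply $X\mapsto\oupper{\fancyP}{X}$); consequently $\overline{\op{\fancyP,E}{G}}=\overline{\layer{\fancyP}{G}}=\layerr{\br{G}}=\gfitt{\br{G}}$, and $\op{\fancyP,E}{G}$ is the full preimage of $\gfitt{\br{G}}$ in $G$ because $\op{\fancyP}{G}\leq\op{\fancyP,E}{G}$.

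Now I would bound the two generating pieces of $\op{\fancyP,E}{H}$. Let $L\in\comp{\fancyP}{H}$. Since $\op{\fancyP,E}{G}\normal G$ is subnormal in $H$, Lemma~\ref{pc:4}(d) applied inside $H$ gives $L\leq\op{\fancyP,E}{G}$ or $[L,\op{\fancyP,E}{G}]\leq\op{\fancyP}{L}$. In the second case $[L,\op{\fancyP,E}{G}]\leq\op{\fancyP}{L}\cap\op{\fancyP,E}{G}$; this is a $\fancyP$-subgroup normalized by $\op{\fancyP,E}{G}$, hence lies in $\op{\fancyP}{\op{\fancyP,E}{G}}=\op{\fancyP,E}{G}\cap\op{\fancyP}{G}=\op{\fancyP}{G}$ by Lemma~\ref{pc:3}(b). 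Thus $\br{L}$ centralizes $\gfitt{\br{G}}$, so $\br{L}\leq\cz{\br{G}}{\gfitt{\br{G}}}\leq\gfitt{\br{G}}$ and $\br{L}$ is abelian; as $L$ is perfect (Lemma~\ref{pc:4}(c)) this forces $\br{L}=1$, i.e.\ $L\leq\op{\fancyP}{G}$. Either way $L\leq\op{\fancyP,E}{G}$, so $\layer{\fancyP}{H}\leq\op{\fancyP,E}{G}$. Likewise, as $\op{\fancyP}{H}\normal H$ we get $[\op{\fancyP}{H},\layer{\fancyP}{G}]\leq\op{\fancyP}{H}\cap\layer{\fancyP}{G}$, which is subnormal in $G$ and a $\fancyP$-group, hence $\leq\op{\fancyP}{G}$ by Lemma~\ref{pc:3}(a); so $\overline{\op{\fancyP}{H}}$ centralizes $\gfitt{\br{G}}$, lies in it, and therefore $\op{\fancyP}{H}\leq\op{\fancyP,E}{G}$. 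Hence $\op{\fancyP,E}{H}=\op{\fancyP}{H}\layer{\fancyP}{H}\leq\op{\fancyP,E}{G}$, which with the first inclusion finishes (a).

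I expect the main obstacle to be the preliminary identification $\overline{\layer{\fancyP}{G}}=\gfitt{\br{G}}$, i.e.\ that passing to $G/\op{\fancyP}{G}$ carries $\fancyP$-components to ordinary components; once that is in hand the rest is a short commutator computation resting on the standard fact $\cz{\br{G}}{\gfitt{\br{G}}}\leq\gfitt{\br{G}}$. If one prefers to bypass that claim, the same conclusion follows by induction on $\card{G}$: when $\op{\fancyP}{G}\neq 1$ pass to $\br{G}$ and pull back, and when $\op{\fancyP}{G}=1$ (so $\sol{G}=1$, $\op{\fancyP,E}{G}=\layerr{G}$, $\comp{\fancyP}{G}=\compp{G}$) run the commutator argument above with $\br{G}=G$.
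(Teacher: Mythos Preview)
Your proof is correct and follows the same approach as the paper: pass to $\br{G}=G/\op{\fancyP}{G}$, use $\sol{\br{G}}=1$ to obtain $\cz{\br{G}}{\layerr{\br{G}}}=1$, and deduce (b) from (a) via $H\cap M$. The paper's write-up is marginally more direct---after asserting $\layerr{\br{G}}=\br{\op{\fancyP,E}{G}}$ it shows $\op{\fancyP}{\br{H}}=1$ (hence $\op{\fancyP}{H}=\op{\fancyP}{G}$) and then $\layerr{\br{H}}=\layerr{\br{G}}$ entirely in the quotient, rather than pushing $\fancyP$-components of $H$ into $\op{\fancyP,E}{G}$ via Lemma~\ref{pc:4}(d)---but the substance, and in particular the key identification $\br{\layer{\fancyP}{G}}=\layerr{\br{G}}$ that you rightly flag and justify more carefully than the paper does, is the same.
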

\begin{proof}
    (a). Set $\br{G} = G/\op{\fancyP}{G}$.
    Then \[
        \layerr{\br{G}} = \br{\op{\fancyP,E}{G}} \leq \br{H}.
    \]
    Since $\fancyP$ is closed under extensions we have $\op{\fancyP}{\br{G}} = 1$.
    Then
    \begin{align*}
        [\op{\fancyP}{\br{H}},\layerr{\br{G}}] &\leq %
            \op{\fancyP}{\br{H}} \cap \layerr{\br{G}} \\
            &\leq \op{\fancyP}{\layerr{\br{G}}} \leq \op{\fancyP}{\br{G}} = 1.
    \end{align*}
    Now every solvable group is a $\fancyP$-group so $\sol{\br{G}} = 1$
    and hence $\cz{\br{G}}{\layerr{\br{G}}} = 1$.
    Then $\op{\fancyP}{\br{H}} = 1$ and $\op{\fancyP}{H} \leq \op{\fancyP}{G}$.
    Since $\op{\fancyP}{G} \leq H$
    it follows that $\op{\fancyP}{H} = \op{\fancyP}{G}$.
    As $\layerr{\br{G}} \leq \br{H}$ we have
    $\layerr{\br{G}} \normal \layerr{\br{H}}$.
    Then any component of $\br{H}$ not contained in $\layerr{\br{G}}$
    would centralize $\layerr{\br{G}}$,
    contrary to $\cz{\br{G}}{\layerr{\br{G}}} = 1$.
    We deduce that $\layerr{\br{G}} = \layerr{\br{H}}$
    and the conclusion follows.

    (b). Observe that
    $\op{\fancyP,E}{H} = \op{\fancyP,E}{H \cap M} = \op{\fancyP,E}{M}$.
\end{proof}
\noindent We remark that (b) is an elementary version of Bender's Maximal Subgroup Theorem.

\begin{Lemma}\label{md:3}
    Let $r$ be a prime and $A$ be a noncyclic elementary abelian $r$-group
    that acts coprimely on the $K$-group $G$.
    Then
    \begin{align*}
        \op{\fancyP,E}{G} &=    \op{\fancyP,E}{ \gen{\op{\fancyP,E}{\cz{G}{B}}}{ B \in \hyp{A} } }\\
                        &=    \op{\fancyP,E}{ \gen{\op{\fancyP,E}{\cz{G}{a}}}{ a \in A\nonid } }.
    \end{align*}
\end{Lemma}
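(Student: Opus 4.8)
I would prove this by showing each of the three subgroups $\op{\fancyP,E}{G}$, $E_1 := \op{\fancyP,E}{\gen{\op{\fancyP,E}{\cz{G}{B}}}{B \in \hyp{A}}}$ and $E_2 := \op{\fancyP,E}{\gen{\op{\fancyP,E}{\cz{G}{a}}}{a \in A\nonid}}$ contains the next, cyclically, so that all three coincide; the tool for each containment will be Lemma~\ref{md:2}(a), which says that if $\op{\fancyP,E}{X} \leq Y \leq X$ then $\op{\fancyP,E}{X} = \op{\fancyP,E}{Y}$. Since every $\cz{G}{a}$ with $a \in A\nonid$ is contained in some $\cz{G}{B}$ with $B \in \hyp{A}$ (choose a hyperplane through $a$), we have $E_2 \leq E_1$, and conversely $\cz{G}{B} \leq \cz{G}{a}$ for $a \in B\nonid$ gives $E_1 \leq E_2$ once we know each of these equals $\op{\fancyP,E}{G}$; so really the crux is the single equality $\op{\fancyP,E}{G} = E_1$, and then $E_1 = E_2$ will follow by symmetry of the argument. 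So I would focus on proving $\op{\fancyP,E}{G} = \op{\fancyP,E}{\gen{\op{\fancyP,E}{\cz{G}{B}}}{B \in \hyp{A}}}$.

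For this, set $G^{*} = \gen{\op{\fancyP,E}{\cz{G}{B}}}{B \in \hyp{A}}$. By Lemma~\ref{md:2}(a) it suffices to show $\op{\fancyP,E}{G} \leq G^{*}$, i.e.\ that both $\op{\fancyP}{G}$ and $\layer{\fancyP}{G}$ lie in $G^{*}$. First I would handle $\op{\fancyP}{G}$: writing $P = \op{\fancyP}{G}$, Coprime Action gives $P = \gen{\cz{P}{B}}{B \in \hyp{A}}$ (since $A$ is noncyclic elementary abelian, a coprime module is generated by the fixed points of its hyperplanes), and each $\cz{P}{B}$ is an $A$-invariant normal $\fancyP$-subgroup of $\cz{G}{B}$ by Lemma~\ref{pc:3}, hence lies in $\op{\fancyP}{\cz{G}{B}} \leq \op{\fancyP,E}{\cz{G}{B}} \leq G^{*}$. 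So $\op{\fancyP}{G} \leq G^{*}$. Next, for $\layer{\fancyP}{G}$ it is enough to put each $\fancyP$-component $K \in \comp{\fancyP}{G}$ inside $G^{*}$. Passing to $\br{G} = G/\op{\fancyP}{G}$, in which $\op{\fancyP}{\br{G}} = 1$ and $\sol{\br{G}} = 1$ (as every solvable group is a $\fancyP$-group), $\br{K}$ is a component of $\br{G}$, and by Coprime Action there is $B \in \hyp{A}$ with $\cz{\br K}{B} \neq 1$; by \cite[Theorem~4.4]{I} (applied to the $A$-component containing $\br K$) and the structure theory in \S6 of \cite{I}, $\cz{\br K}{B}$ — or rather the $(A,\fancyP)$-component / $E$-layer it generates inside $\cz{G}{B}$ after lifting — contains enough of $K$ to force $K \leq \op{\fancyP,E}{\cz{G}{B}}$. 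More precisely, I would use Lemma~\ref{lglob:7} or Theorem~\ref{lglob:5}-type local-to-global reasoning: $\cz{K}{B}$ is not a $\fancyP$-group (else, since $A/B$ is cyclic, coprime action would make $K/\op{\fancyP}{K}$ solvable, contradicting quasisimplicity via \cite[Theorem~4.4]{I}), so $\op{\fancyP,E}{\cz{G}{B}}$ contains a genuine $\fancyP$-component mapping onto the component $\br K$, and tracking preimages as in the proof of Lemma~\ref{ap:3} and Lemma~\ref{lglob:6} shows $K$ itself lies in $G^{*}$.

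Having shown $\op{\fancyP,E}{G} \leq G^{*} \leq G$, Lemma~\ref{md:2}(a) gives $\op{\fancyP,E}{G} = \op{\fancyP,E}{G^{*}} = E_1$. The same argument with $a \in A\nonid$ in place of $B \in \hyp{A}$ — now using that for each $\fancyP$-component $K$ of $G$ there is $a \in A\nonid$ with $\cz{K}{a} \neq 1$ and that $\op{\fancyP}{G} = \gen{\cz{\op{\fancyP}{G}}{a}}{a \in A\nonid}$ by Coprime Action — gives $\op{\fancyP,E}{G} \leq \gen{\op{\fancyP,E}{\cz{G}{a}}}{a \in A\nonid} \leq G$, hence $\op{\fancyP,E}{G} = E_2$ by Lemma~\ref{md:2}(a) again, completing the proof.

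**Main obstacle.** The routine part is the $\op{\fancyP}{G}$-piece and the cyclic containments; the real work is showing each $\fancyP$-component $K$ of $G$ is swallowed by $\op{\fancyP,E}{\cz{G}{B}}$ for a suitable hyperplane $B$. The subtlety is that $\cz{K}{B}$ need only be a subgroup of $\cz{G}{B}$, not normal, and it may not itself be a $\fancyP$-component of $\cz{G}{B}$; one must pass to the $A$-component of $G$ containing $K$, invoke \cite[Theorem~4.4]{I} to guarantee $\cz{\,\cdot\,}{B}$ is nonsolvable and hence not a $\fancyP$-group, and then use the $(A,\fancyP)$-component bookkeeping (Lemmas~\ref{ap:3}, \ref{ap:4}, \ref{lglob:6}) to descend back to $K$ inside the layer of $\cz{G}{B}$. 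Getting this local-to-global step clean — in particular making sure the hypotheses of the cited results from \cite{I} are actually available here (coprimeness of the quotient action of $A/B$, $K$-group hypothesis descending to $\cz{G}{B}$) — is where I expect to spend most of the effort.
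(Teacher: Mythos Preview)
Your overall architecture is exactly the paper's: set $H$ equal to either generating subgroup $\gen{\op{\fancyP,E}{\cz{G}{B}}}{B\in\hyp{A}}$ or $\gen{\op{\fancyP,E}{\cz{G}{a}}}{a\in A\nonid}$, show $\op{\fancyP,E}{G}\leq H$, then invoke Lemma~\ref{md:2}(a). Your treatment of $\op{\fancyP}{G}$ via Coprime Action is also what the paper does.

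The gap is in your layer step. You propose to place each individual $\fancyP$-component $K$ of $G$ inside $\op{\fancyP,E}{\cz{G}{B}}$ for \emph{one} suitable hyperplane $B$, by finding $B$ with $\cz{\br K}{B}\neq 1$ and then arguing that a $\fancyP$-component of $\cz{G}{B}$ ``maps onto $\br K$''. This cannot work in general. Take $A$ of rank~$2$ acting by regularly permuting the $r^{2}$ simple direct factors of $G=L^{r^{2}}$ (with $\op{\fancyP}{G}=1$). Each factor $K$ is a $\fancyP$-component of $G$, but $A$ does not even normalize $K$, so your appeal to Coprime Action on $K$ is illegitimate; and in fact $\cz{K}{B}=1$ for every hyperplane $B$, while $\cz{G}{B}\cong L^{r}$ consists of $B$-diagonal tuples and contains no subgroup projecting onto a single factor. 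No component of any $\cz{G}{B}$ maps onto $\br K$. The layer of $G$ is recovered from the $\cz{G}{B}$ only by \emph{generation}, never component-by-component.

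The lemmas you reach for (Lemma~\ref{lglob:7}, Theorem~\ref{lglob:5}, Lemma~\ref{lglob:6}) also run in the wrong direction: they push $(A,\fancyP)$-components of a local subgroup $H$ up into the global $\compasol{G}$, whereas here you need to pull $\layer{\fancyP}{G}$ down into the local pieces. The paper does this in one line: after passing to $\br G=G/\op{\fancyP}{G}$ (where $\sol{\br G}=1$ and hence $\op{\fancyP,E}{\br G}=\layerr{\br G}$) it simply invokes \ICiteLemmaAQStwelve, which is precisely the global generation statement for the layer from the layers of the $\cz{\br G}{B}$ (respectively $\cz{\br G}{a}$). That lemma is where the substance lives; the argument you sketch does not recover it.
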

\begin{proof}
    Let $H$ be either right hand side.
    Coprime Action implies that $\op{\fancyP}{G} \leq H$.
    Passing to the quotient $G/\op{\fancyP}{G}$ and applying
    \ICiteLemmaAQStwelve\ it follows that $\op{\fancyP,E}{G} \leq H$.
    Apply Lemma~\ref{md:2}(a).
\end{proof}

Throughout the remainder of this section we assume the following.
\begin{Hypothesis}\label{md:4}\mbox{}
    \begin{enumerate}
        \item[(a)]  Hypothesis~\ref{md:1}.
        \item[(b)]  $r$ is a prime and $A$ is an elementary abelian $r$-group
                    with rank at least $3$.
        \item[(c)]  $A$ acts on the group $G$.
        \item[(d)]  $\theta$ is an $A$-signalizer functor on $G$.
        \item[(e)]  $\theta(a)$ is a $K$-group for all $a \in A\nonid$.
        \item[(f)]  $\widetilde{G} = \gen{ \theta(a) }{ a \in A\nonid }$.
    \end{enumerate}
\end{Hypothesis}

\begin{Lemma}\label{md:5}
    $\widetilde{G} = \gen{ \theta(B) }{ B \in \hyp{A} }.$
\end{Lemma}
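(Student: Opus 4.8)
The plan is to establish the two containments $\widehat{G}\leq\widetilde{G}$ and $\widetilde{G}\leq\widehat{G}$ separately, where $\widehat{G}=\gen{\theta(B)}{B\in\hyp{A}}$. Since $A$ has rank at least $3$, every $B\in\hyp{A}$ has rank at least $2$, hence is noncyclic, so $\theta(B)=\gen{\cz{\theta(c)}{B}}{c\in B\nonid}$. Two consequences will be used: first, $\theta(B)\leq\gen{\theta(c)}{c\in B\nonid}$; second, for any fixed $a\in B\nonid$ the subgroup $\cz{\theta(a)}{B}=\theta(a)\cap\cz{G}{B}$ occurs among the defining generators of $\theta(B)$, so $\cz{\theta(a)}{B}\leq\theta(B)$.

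The containment $\widehat{G}\leq\widetilde{G}$ is immediate: for $B\in\hyp{A}$ we have $\theta(B)\leq\gen{\theta(c)}{c\in B\nonid}\leq\gen{\theta(a)}{a\in A\nonid}=\widetilde{G}$.

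For $\widetilde{G}\leq\widehat{G}$, fix $a\in A\nonid$; it suffices to show $\theta(a)\leq\widehat{G}$. The subgroup $\theta(a)$ is $A$-invariant, $A$ acts coprimely on it, and $\listgen{a}$ acts trivially on it since $\theta(a)\leq\cz{G}{a}$; thus $\br{A}=A/\listgen{a}$ acts coprimely on $\theta(a)$, and $\br{A}$ has rank at least $2$, hence is noncyclic. By Coprime Action,
\[
    \theta(a)=\gen{\cz{\theta(a)}{\br{B}}}{\br{B}\in\hyp{\br{A}}}.
\]
Every $\br{B}\in\hyp{\br{A}}$ equals $B/\listgen{a}$ for a unique $B\in\hyp{A}$ with $a\in B$, and since $\listgen{a}$ centralizes $\theta(a)$ we get $\cz{\theta(a)}{\br{B}}=\cz{\theta(a)}{B}$. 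By the second consequence noted above, applied with this (noncyclic) $B$ and with $a\in B\nonid$, we obtain $\cz{\theta(a)}{B}\leq\theta(B)\leq\widehat{G}$. Hence $\theta(a)\leq\widehat{G}$, and letting $a$ range over $A\nonid$ gives $\widetilde{G}\leq\widehat{G}$.

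I do not anticipate a real obstacle: the argument is a routine application of the coprime generation lemma together with the description of $\theta$ on noncyclic subgroups, and the only bookkeeping is the correspondence between the hyperplanes of $A$ containing $a$ and the hyperplanes of $A/\listgen{a}$. It is worth noting that neither the balance condition of the signalizer functor nor the hypothesis that the groups $\theta(a)$ are $K$-groups enters; only the $A$-invariance of $\theta(a)$ and the inclusion $\theta(a)\leq\cz{G}{a}$ are used. (If one adopts the convention in which $\theta(B)$ denotes $\bigcap_{c\in B\nonid}\theta(c)$, the containment $\cz{\theta(a)}{B}\leq\theta(B)$ instead follows from the balance condition, and the rest of the proof is unchanged.)
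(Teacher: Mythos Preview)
Your proof is correct and follows essentially the same route as the paper: apply coprime generation for the action of $A/\listgen{a}$ on $\theta(a)$, identify the hyperplanes of $A/\listgen{a}$ with the hyperplanes of $A$ containing $a$, and use that $\cz{\theta(a)}{B}=\theta(a)\cap\cz{G}{B}=\theta(B)$ for $a\in B$. The paper compresses this into two sentences by invoking the equality $\theta(a)\cap\cz{G}{B}=\theta(B)$ directly and leaving the easy containment $\theta(B)\leq\widetilde{G}$ implicit; your version spells out both directions and the hyperplane correspondence. One small remark: the clause ``hence is noncyclic, so $\theta(B)=\gen{\cz{\theta(c)}{B}}{c\in B\nonid}$'' is unnecessary, since under the standard convention $\theta(B)=\bigcap_{c\in B\nonid}\theta(c)$ the balance condition already gives $\cz{\theta(c)}{B}=\theta(B)$ for every $c\in B\nonid$, regardless of whether $B$ is cyclic.
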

\begin{proof}
    Let $a \in A\nonid$.
    Coprime Action applied to the action of $A/\listgen{a}$
    on $\theta(a)$ implies that \[
        \theta(a) = \gen{ \theta(a) \cap \cz{G}{B} }{ a \in B \in \hyp{A} }.
    \]
    Now $\theta(a) \cap \cz{G}{B} = \theta(B)$ whenever $a \in B \in \hyp{A}$
    so the conclusion follows.
\end{proof}
\begin{Lemma}\label{md:6}
    Let
    \begin{align*}
        H_{1}   &=  \gen{\op{\fancyP,E}{\theta(a)}}{a \in A\nonid} \\
    \intertext{and}
        H_{2}   &=  \gen{\op{\fancyP,E}{\theta(B)}}{B \in \hyp{A}}.
    \end{align*}
    Let $i \in \listset{1,2}$ and suppose that $H_{i}$ is contained in
    a $\theta$-subgroup.
    Then $H_{i}$ is a $\theta$-subgroup and
    $\op{\fancyP,E}{H_{1}} \normal \widetilde{G}$.
\end{Lemma}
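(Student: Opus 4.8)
The plan is to treat the two cases $i=1$ and $i=2$ uniformly as far as possible, using Lemma~\ref{md:3} and Lemma~\ref{md:5} to reduce the global statement to the local pieces $\op{\fancyP,E}{\theta(a)}$, which are $\theta$-subgroups by construction. First I would fix a $\theta$-subgroup $W$ with $H_i \leq W$; the hypothesis gives that $W$ is an $A\cz{G}{A}$-invariant $K$-group (a $\theta$-subgroup is $A$-invariant and its normalizer properties let us apply the results of \S\ref{nap} inside it). Since $\theta(a) \leq W$ for each $a$ (indeed $\theta(a) = W \cap \cz{G}{a}$ by the signalizer functor axioms once $W$ is a $\theta$-subgroup), and $\op{\fancyP,E}{\theta(a)}$ is contained in $\op{n\fancyP}{W;A}$ by the argument of Theorem~\ref{nap:4}, I would show that $H_i \leq \op{n\fancyP}{W;A}$. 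By Theorem~\ref{nap:4}(a) this latter group is a near $(A,\fancyP)$-group, in particular $\cz{H_i}{A}$ is a $\fancyP$-group, so $H_i$ is a near $(A,\fancyP)$-subgroup of $G$ contained in a $\theta$-subgroup; the standard closure property of signalizer functors (that a subgroup contained in a $\theta$-subgroup which is itself generated by $A$-invariant pieces sitting inside the $\theta(a)$ is again a $\theta$-subgroup) then gives that $H_i$ is a $\theta$-subgroup.

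For the normality assertion $\op{\fancyP,E}{H_1} \normal \widetilde G$, the key is Lemma~\ref{md:2}(a) combined with Lemma~\ref{md:3}. Since $H_1$ is now a $\theta$-subgroup containing each $\op{\fancyP,E}{\theta(a)}$, and $\widetilde G = \gen{\theta(a)}{a \in A\nonid}$, I would argue that $\op{\fancyP,E}{H_1}$ is normalized by every $\theta(a)$: indeed $\op{\fancyP,E}{\theta(a)} \leq H_1$ forces, via Lemma~\ref{md:2}(a) applied with $G = \theta(a)$ and $H = H_1 \cap \theta(a)$, that $\op{\fancyP,E}{\theta(a)} = \op{\fancyP,E}{H_1 \cap \theta(a)}$, and then Theorem~\ref{nap:4}(c) (with $H = H_1 \cap \theta(a)$ inside the group $\theta(a)$, or better inside $\widetilde G$) shows that $\op{\fancyP,E}{H_1 \cap \theta(a)}$ normalizes $\op{n\fancyP}{\widetilde G;A} \supseteq \op{\fancyP,E}{H_1}$. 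More directly: apply Theorem~\ref{nap:4}(c) with $\widetilde G$ in the role of $G$ and $H_1$ in the role of $H$ to get that $\op{\fancyP,E}{H_1}$ normalizes $\op{n\fancyP}{\widetilde G;A}$; then a separate argument identifies $\op{n\fancyP}{\widetilde G;A}$ with (or shows it is normalized alongside) $\op{\fancyP,E}{H_1}$, using that $\widetilde G$ is itself generated by the $\theta(a)$ and hence $\op{\fancyP,E}{\widetilde G}$ is controlled by Lemma~\ref{md:3}.

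The cleanest route for the normality, which I would adopt, is: show $\op{\fancyP,E}{H_1} = \op{\fancyP,E}{\widetilde G}$. Inclusion $\op{\fancyP,E}{H_1} \leq \op{\fancyP,E}{\widetilde G}$ needs $H_1 \leq \op{\fancyP,E}{\widetilde G}$, which follows from Lemma~\ref{md:3} once one checks each $\op{\fancyP,E}{\theta(a)} \leq \op{\fancyP,E}{\widetilde G}$ — and this is exactly the content of Lemma~\ref{md:3}'s second displayed equality together with Lemma~\ref{md:2}(a). For the reverse, $\op{\fancyP,E}{\widetilde G} \leq H_1$: here I would apply Lemma~\ref{md:3} to $\widetilde G$ to write $\op{\fancyP,E}{\widetilde G} = \op{\fancyP,E}{\gen{\op{\fancyP,E}{\cz{\widetilde G}{a}}}{a \in A\nonid}}$, and then use that $\cz{\widetilde G}{a} \leq \theta(a)$ (a signalizer functor axiom, since $\widetilde G$ is generated by $\theta$-subgroups and $\theta(a)$ is the full fixed-point-type piece) to get $\op{\fancyP,E}{\cz{\widetilde G}{a}} \leq \op{\fancyP,E}{\theta(a)} \leq H_1$ via Lemma~\ref{md:2}(a). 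Hence $\op{\fancyP,E}{\widetilde G} \leq \op{\fancyP,E}{H_1} \leq H_1$ by Lemma~\ref{md:2}(a) again, so the two coincide and the resulting $\op{\fancyP,E}{\widetilde G}$ is manifestly normal in $\widetilde G$. The main obstacle I anticipate is bookkeeping the signalizer-functor identity $\cz{\widetilde G}{a} \leq \theta(a)$ (equivalently $\theta(a) = \cz{\widetilde G}{a} \cap \theta(a)$, i.e. that $\widetilde G$ is a $\theta$-group in the appropriate sense), since $\widetilde G$ is only known to be generated by the $\theta(a)$ and its fixed points need not a priori lie in a single $\theta(a)$ without invoking the completeness/balance formalism — this is where the rank $\geq 3$ hypothesis and the results cited from \cite{I} on signalizer functors enter, and it must be handled carefully rather than waved through.
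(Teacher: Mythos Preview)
Your proposal has a genuine gap in the normality argument, and the first part is unnecessarily complicated.

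For the first assertion, you claim that $\theta(a) \leq W$ once $H_i \leq W$; this is false, since $H_i$ contains only $\op{\fancyP,E}{\theta(a)}$, not $\theta(a)$ itself. The detour through $\op{n\fancyP}{W;A}$ and Theorem~\ref{nap:4} is also unneeded: $H_i$ is visibly $A$-invariant, and any $A$-invariant subgroup of a $\theta$-subgroup is a $\theta$-subgroup. That is the whole argument.

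The serious problem is in your ``cleanest route'' for normality, where you try to show $\op{\fancyP,E}{H_1} = \op{\fancyP,E}{\widetilde G}$ by applying Lemma~\ref{md:3} to $\widetilde G$. Lemma~\ref{md:3} requires that $A$ act coprimely on a $K$-group, and $\widetilde G$ is not known to be either an $r'$-group or a $K$-group; it is merely the subgroup generated by the $\theta(a)$. Worse, the inclusion $\cz{\widetilde G}{a} \leq \theta(a)$ that you need is essentially the statement that $\theta$ is complete, which is precisely what is at stake in the application (Theorem~\ref{md:8}). You flag this obstacle yourself but do not resolve it, and it cannot be resolved along these lines without circularity. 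The same objection applies to your attempt to invoke Theorem~\ref{nap:4}(c) with $\widetilde G$ in the role of $G$.

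The paper's argument avoids $\widetilde G$ entirely until the final step. One works inside $H_1$, which \emph{is} a $K$-group on which $A$ acts coprimely, and applies Lemma~\ref{md:3} to $H_1$ with a hyperplane $B \in \hyp{A}$ in the role of $A$. The signalizer axioms give the sandwich $\op{\fancyP,E}{\theta(b)} \leq \cz{H_1}{b} \leq \theta(b)$ for each $b \in B\nonid$, so Lemma~\ref{md:2}(a) forces $\op{\fancyP,E}{\cz{H_1}{b}} = \op{\fancyP,E}{\theta(b)}$. Since $\theta(B) \leq \theta(b)$ normalizes the latter for every $b \in B\nonid$, it normalizes their generated subgroup and hence $\op{\fancyP,E}{H_1}$. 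Now Lemma~\ref{md:5} gives $\widetilde G = \gen{\theta(B)}{B \in \hyp{A}}$, and normality follows. The case $i=2$ reduces to $i=1$ by showing $H_1 \leq H_2$ via Lemma~\ref{md:3}.
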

\begin{proof}
    Since any $A$-invariant subgroup of a $\theta$-subgroup
    is a $\theta$-subgroup,
    the first assertion holds.
    Suppose $i=1$.
    Let $B \in \hyp{A}$.
    Lemma~\ref{md:3},
    with $B$ in the role of $A$,
    implies that \[
        \op{\fancyP,E}{H_{1}} = %
        \op{\fancyP,E}{\gen{\op{\fancyP,E}{\cz{H_{1}}{b}}}{b \in B\nonid}}.
    \]
    Let $b \in B\nonid$.
    Since $H_{1}$ is a $\theta$-subgroup we have
    $\op{\fancyP,E}{\theta(b)} \leq \cz{H_{1}}{b} \leq \theta(b)$.
    Lemma~\ref{md:2}(a) implies that \[
        \op{\fancyP,E}{\theta(b)} = \op{\fancyP,E}{\cz{H_{1}}{b}}.
    \]
    Note that $\theta(B) \leq \theta(b)$ so $\theta(B)$
    normalizes $\op{\fancyP,E}{\theta(b)}$.
    It follows that $\theta(B)$ normalizes $\op{\fancyP,E}{H_{1}}$.
    Then Lemma~\ref{md:5} implies that $\op{\fancyP,E}{H_{1}} \normal \widetilde{G}$.

    Suppose $i=2$.
    Let $a \in A\nonid$.
    Lemma~\ref{md:3} implies that
    \begin{align*}
        \op{\fancyP,E}{\theta(a)}   &= %
            \op{\fancyP,E}{\gen{\op{\fancyP,E}{\theta(a) \cap \cz{G}{B}}}{a \in B \in \hyp{A}}} \\
                                    &= %
            \op{\fancyP,E}{\gen{\op{\fancyP,E}{\theta(B)}}{B \in \hyp{A}}} \\
                                    &\leq H_{2}.
    \end{align*}
    It follows that $H_{1} \leq H_{2}$,
    so apply the previous case.
\end{proof}

For each $a \in A\nonid$,
define \[
    \theta_{n\fancyP}(a) = \op{n\fancyP}{\theta(a);A}.
\]
Theorem~\ref{nap:4} implies that $\theta_{n\fancyP}$
is an $A$-signalizer functor on $G$.

\begin{Lemma}\label{md:7}
    Assume that $\theta_{n\fancyP}$ is complete.
    Then \[
        \op{\fancyP,E}{\theta(B)} \leq \n{G}{\theta_{n\fancyP}(G)}
    \]
    for all $B \in \hyp{A}$.
\end{Lemma}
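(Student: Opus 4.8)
I would argue by induction on $|G|$. Fix $B \in \hyp{A}$ and set $\theta_{n\fancyP}(G) = W$; let me write $H_2 = \gen{\op{\fancyP,E}{\theta(C)}}{C \in \hyp{A}}$ so that the object we wish to normalize, $W$, is defined because $\theta_{n\fancyP}$ is complete. The first step is to reduce to a statement about $\cz{G}{b}$ for $b \in B\nonid$: since $A$ has rank at least $3$, Lemma~\ref{md:3} (applied with $B$ in the role of $A$) gives
\[
    \op{\fancyP,E}{\theta(B)} = \op{\fancyP,E}{\gen{\op{\fancyP,E}{\cz{\theta(B)}{b}}}{b \in B\nonid}},
\]
and since $\theta(B) \leq \theta(b)$ for each such $b$, each term $\op{\fancyP,E}{\cz{\theta(B)}{b}}$ is governed by $\op{\fancyP,E}{\theta(b)}$ via Lemma~\ref{md:2}. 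So it suffices to show that each $\op{\fancyP,E}{\theta(b)}$ normalizes $W$, for then $\op{\fancyP,E}{\theta(B)}$, being generated (after applying $\op{\fancyP,E}{\cdot}$) by subgroups of things that normalize $W$, will normalize $W$ as well — here I would use that $W = \theta_{n\fancyP}(G)$ is $A$-invariant and that $\n{G}{W}$ is $A$-invariant, together with Lemma~\ref{md:2}(a) to identify $\op{\fancyP,E}{\op{\fancyP,E}{\cdot}}$ with $\op{\fancyP,E}{\cdot}$.

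**Second step.** The key local input is Theorem~\ref{nap:4}(c): for any $A\cz{G}{A}$-invariant subgroup $X$ of $G$, $\op{\fancyP,E}{X}$ normalizes $\op{n\fancyP}{G;A}$. I want to apply this inside $\cz{G}{b}$. Because $\theta_{n\fancyP}$ is an $A$-signalizer functor with $\theta_{n\fancyP}(b) = \op{n\fancyP}{\theta(b);A}$, completeness of $\theta_{n\fancyP}$ means $\theta_{n\fancyP}(G) \cap \cz{G}{b} = \theta_{n\fancyP}(b) = \op{n\fancyP}{\theta(b);A}$; that is, $W \cap \cz{G}{b} = \op{n\fancyP}{\theta(b);A}$. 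Now $\theta(b)$ is an $A\cz{G}{b}$-invariant $K$-group on which $A$ acts coprimely, so Hypothesis~\ref{nap:1} holds with $\theta(b)$ in the role of $G$, and Theorem~\ref{nap:4}(c) (with $H = \theta(b)$, which is certainly $A\cz{\theta(b)}{A}$-invariant in itself) tells us that $\op{\fancyP,E}{\theta(b)}$ normalizes $\op{n\fancyP}{\theta(b);A} = W \cap \cz{G}{b}$. But of course $\op{\fancyP,E}{\theta(b)} \leq \theta(b) \leq \cz{G}{b}$, so it also normalizes $W \cap \cz{G}{b}$ as a subgroup of $\cz{G}{b}$; to conclude that it normalizes $W$ itself I need to climb from $\cz{G}{b}$ back up to $G$.

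**The main obstacle and how to get past it.** The genuinely delicate point is exactly that last ascent: knowing $Y := \op{\fancyP,E}{\theta(b)}$ normalizes $W \cap \cz{G}{b}$ is weaker than knowing it normalizes $W$. This is where the induction on $|G|$ and the $\theta$-subgroup machinery come in. The standard device is to observe that $\theta_{n\fancyP}$ restricted to a proper $A$-invariant $\theta$-subgroup containing the relevant local pieces is again a signalizer functor, and to show that $W \normal \n{G}{Z}$ for a suitable $A$-invariant subgroup $Z$ obtained from the $\op{\fancyP,E}{\theta(a)}$'s (playing the role of $\listgen{\mathcal C_n}$ in the proof of Theorem~\ref{nap:4}(c)), so that the ambient group in the inductive step is $\n{G}{Z} < G$. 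Concretely: let $H_1 = \gen{\op{\fancyP,E}{\theta(a)}}{a\in A\nonid}$; if $H_1$ is not contained in a $\theta$-subgroup then one argues directly that $\widetilde G$ itself is built from proper pieces and induction applies, while if $H_1$ is contained in a $\theta$-subgroup then Lemma~\ref{md:6} gives that $\op{\fancyP,E}{H_1} \normal \widetilde G$, which pins down enough of the global structure that $W$, being constructed functorially and centralized/normalized appropriately by the $A$-pieces, is forced to be normalized by each $Y$. I expect the bulk of the work — and the only part that is not bookkeeping — to be setting up this $\theta$-subgroup dichotomy cleanly so that in each branch the relevant normalizer is a proper subgroup and the inductive hypothesis, applied to the restricted functor, yields that $\op{\fancyP,E}{\theta(B)}$ normalizes $\theta_{n\fancyP}(G)$.
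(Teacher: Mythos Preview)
Your approach has a self-inflicted obstacle, and the paper's proof avoids it entirely with a single observation you missed.

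The paper applies Theorem~\ref{nap:4}(c) not with $H=\theta(b)$ but with $H=\theta(B)$, inside the ambient group $\theta(b)$, for each $b\in B\nonid$. Since $\cz{\theta(b)}{A}=\theta(A)\leq\theta(B)$, the subgroup $\theta(B)$ is $A\cz{\theta(b)}{A}$-invariant in $\theta(b)$, so Theorem~\ref{nap:4}(c) says $\op{\fancyP,E}{\theta(B)}$ normalizes $\op{n\fancyP}{\theta(b);A}=\theta_{n\fancyP}(b)=\cz{S}{b}$. The point is that the \emph{same} group $\op{\fancyP,E}{\theta(B)}$ normalizes $\cz{S}{b}$ for every $b\in B\nonid$; since $B$ is noncyclic and acts coprimely on $S$, Coprime Action gives $S=\gen{\cz{S}{b}}{b\in B\nonid}$, and we are done. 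No induction, no dichotomy, no Lemma~\ref{md:6}.

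Your route loses exactly this: by aiming to show $\op{\fancyP,E}{\theta(b)}$ normalizes $S$, you have a different group for each $b$, and knowing it normalizes $\cz{S}{b}$ says nothing about $\cz{S}{b'}$ for $b'\neq b$. That is why you hit the ``climbing'' obstacle. Moreover, your first reduction via Lemma~\ref{md:3} ``with $B$ in the role of $A$'' is vacuous: $B$ centralizes $\theta(B)$, so $\cz{\theta(B)}{b}=\theta(B)$ for every $b\in B\nonid$ and the displayed identity is trivially $\op{\fancyP,E}{\theta(B)}=\op{\fancyP,E}{\theta(B)}$. The subsequent claim that $\op{\fancyP,E}{\cz{\theta(B)}{b}}$ is ``governed by $\op{\fancyP,E}{\theta(b)}$ via Lemma~\ref{md:2}'' would require $\op{\fancyP,E}{\theta(b)}\leq\theta(B)$, which you have no reason to expect. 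The machinery you propose in the third paragraph may or may not be repairable, but it is solving a problem that does not exist once you choose $H=\theta(B)$ in Theorem~\ref{nap:4}(c).
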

\begin{proof}
    Set $S = \theta_{n\fancyP}(G)$ and let $b \in B\nonid$.
    Then \[
        \cz{S}{b} = \theta_{n\fancyP}(b) = \op{n\fancyP}{\theta(b);A}.
    \]
    Now $\theta(B) \leq \theta(b)$ so Theorem~\ref{nap:4}(c) implies
    $\op{\fancyP,E}{\theta(B)}$ normalizes $\op{n\fancyP}{\theta(b);A}$.
    Since $S = \gen{\cz{S}{b}}{b \in B\nonid}$,
    the result follows.
\end{proof}

\begin{Theorem}[The McBride Dichotomy]\label{md:8}
    Suppose that $\theta$ is a minimal counterexample to the
    Nonsolvable Signalizer Functor Theorem.
    \begin{enumerate}
        \item[(a)]  Either $\theta_{n\fancyP} = 1$ or $\theta = \theta_{n\fancyP}$.

        \item[(b)]  Either
                    \begin{itemize}
                        \item   There exist no nontrivial $\theta(A)$-invariant
                                solvable $\theta$-subgroups; or
                        \item    $\theta(A)$ is solvable and every nonsolvable
                                composition factor of every $\theta$-subgroup
                                belongs to $\badfour$.
                    \end{itemize}
    \end{enumerate}
\end{Theorem}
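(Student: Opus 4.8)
The plan is to derive both parts from the signalizer functor machinery built up in \S\ref{nap} together with the minimality of $\theta$ as a counterexample. First I would record the standing setup: since $\theta$ is a minimal counterexample to the Nonsolvable Signalizer Functor Theorem, $\widetilde{G} = \gen{\theta(a)}{a \in A\nonid}$ is the group being controlled, and by the minimality of the functor $\theta$, every \emph{proper} $A$-signalizer functor on $G$ — in particular any $\theta_0$ with $\theta_0(a) \leq \theta(a)$ for all $a$ and $\theta_0 \neq \theta$ — is either trivial or complete (otherwise $\theta_0$ would itself be a smaller counterexample, or would force a contradiction via the inductive hypothesis). The key sub-functor to feed into this is $\theta_{n\fancyP}$, which by Theorem~\ref{nap:4} is an $A$-signalizer functor and visibly satisfies $\theta_{n\fancyP}(a) = \op{n\fancyP}{\theta(a);A} \leq \theta(a)$.

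For part~(a): apply the dichotomy of the previous paragraph to $\theta_{n\fancyP}$. If $\theta_{n\fancyP} = \theta$ we are in the second alternative, so assume $\theta_{n\fancyP} \neq \theta$; then $\theta_{n\fancyP}$ is either trivial or complete. If trivial we are done, so suppose $\theta_{n\fancyP}$ is complete. I would then aim to show this forces $\theta = \theta_{n\fancyP}$ after all, contradicting the assumption — the mechanism being Lemma~\ref{md:7}: completeness of $\theta_{n\fancyP}$ gives that $\op{\fancyP,E}{\theta(B)}$ normalizes $\theta_{n\fancyP}(G)$ for every $B \in \hyp{A}$, and combining this with Lemma~\ref{md:5} (that $\widetilde{G}$ is generated by the $\theta(B)$) and Lemma~\ref{md:6} one pushes $\op{\fancyP,E}{H_1}$ up to a normal subgroup of $\widetilde{G}$. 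The normality of such an $A\cz{G}{A}$-style invariant subgroup in $\widetilde{G}$, together with minimality, forces the functor to be concentrated, i.e. $\theta = \theta_{n\fancyP}$. So either way one of the two alternatives in~(a) holds.

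For part~(b): I would split on the first alternative of~(a). If $\theta_{n\fancyP} = 1$, then $\op{n\fancyP}{\theta(a);A} = 1$ for all $a$, i.e. no $\theta(a)$ has a nontrivial $A\cz{}{A}$-invariant near $(A,\fancyP)$-subgroup; since every solvable group is a $\fancyP$-group (Hypothesis~\ref{md:1}(b)), a nontrivial $\theta(A)$-invariant solvable $\theta$-subgroup $S$ would lie inside some $\theta(a)$ and be a near $(A,\fancyP)$-group, giving $S \leq \theta_{n\fancyP}(a) = 1$; so we land in the first bullet of~(b). If instead $\theta = \theta_{n\fancyP}$, then each $\theta(a) = \op{n\fancyP}{\theta(a);A}$ is itself a near $(A,\fancyP)$-group, so $\cz{\theta(a)}{A} = \theta(a) \cap \cz{G}{A} \supseteq \theta(A)$-related centralizer is a $\fancyP$-group; applying \ICiteTheoremKoneC\ (as in the proof of Lemma~\ref{nap:5}) to the $A$-simple sections of $\theta(a)$ shows $\theta(A)$ is solvable and, via the classification input \ICiteTheoremKone, that every nonsolvable composition factor of any section of $\theta(a)$ — hence of any $\theta$-subgroup, since $\theta$-subgroups lie in the $\theta(a)$ — is one of $\ltwo{2^{r}}$, $\ltwo{3^{r}}$, $\uthree{2^{r}}$, $\sz{2^{r}}$, which is exactly the set $\badfour$. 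This puts us in the second bullet.

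The main obstacle I anticipate is the step in~(a) that converts ``$\theta_{n\fancyP}$ is complete'' into ``$\theta = \theta_{n\fancyP}$'': one must genuinely use the minimality of $\theta$ as a counterexample and the structure of the Nonsolvable Signalizer Functor Theorem's inductive scheme (that a complete nontrivial proper sub-functor would, via $\n{\widetilde{G}}{\theta_{n\fancyP}(G)}$ being proper and $\theta$-invariant, contradict minimality) — this is where Lemmas~\ref{md:5}, \ref{md:6} and \ref{md:7} are orchestrated, and getting the normalizer/completeness bookkeeping exactly right is the delicate part. The classification-flavoured deductions in~(b) are, by contrast, a routine application of \ICiteTheoremKone\ and \ICiteTheoremKoneC\ once the near $(A,\fancyP)$ hypothesis on every $\theta(a)$ is in hand.
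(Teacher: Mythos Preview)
Your approach to (a) matches the paper's: minimality makes a proper $\theta_{n\fancyP}$ complete, and if it is also nontrivial then Lemma~\ref{md:7} places $H_{2}$ inside the $\theta$-subgroup $\theta(\n{G}{\theta_{n\fancyP}(G)})$, whence Lemma~\ref{md:6} yields a nontrivial $\theta$-subgroup normal in $\widetilde{G}=G$, contradicting minimality. (The contradiction is with the standing fact that no nontrivial $\theta$-subgroup is normal in $G$, not a direct derivation of $\theta=\theta_{n\fancyP}$ as you phrase it, but the outcome is the same.)

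Part (b), however, has two real problems. First, a $\theta$-subgroup $S$ need not lie in any single $\theta(a)$; what you have is only $\cz{S}{a}\leq\theta(a)$ for each $a\in A\nonid$. The correct argument observes that $\cz{S}{a}$ is $A\theta(A)$-invariant and solvable, hence a near $(A,\fancyP)$-subgroup of $\theta(a)$, so $\cz{S}{a}\leq\theta_{n\fancyP}(a)=1$; since $A$ is noncyclic this forces $S=1$. Second, and more seriously, you never specialize $\fancyP$ to ``is solvable''. For a general $\fancyP$ satisfying Hypothesis~\ref{md:1}, knowing that $\theta(A)=\cz{\theta(a)}{A}$ is a $\fancyP$-group does not make it solvable, and \ICiteTheoremKoneC\ cannot bridge this gap --- that result takes a solvable invariant subgroup as hypothesis, not a $\fancyP$-centralizer. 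The paper sets $\fancyP=$ ``is solvable'' at the outset of (b); then ``near $(A,\fancyP)$'' literally means the $A$-centralizer is solvable, so $\theta=\theta_{n\fancyP}$ gives $\theta(A)$ solvable immediately, and the composition-factor restriction for an arbitrary $\theta$-subgroup $X$ follows from \ICiteTheoremKfour\ (not Theorem~4.1) applied with $\cz{X}{A}\leq\theta(A)$ solvable.
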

\begin{proof}
    Note that minimality is with reference to the integer \[
        \card{\theta} = \sum_{a \in A\nonid} \card{\theta(a)}.
    \]
    Since $\theta$ is a minimal counterexample,
    it follows that $G = \widetilde{G}$ and that no nontrivial
    $\theta$-subgroup is normal in $G$.

    (a). Suppose that $\theta_{n\fancyP}$ is complete
    and that $\theta_{n\fancyP} \not= 1$.
    Set $S = \theta_{n\fancyP}(G)$.
    Then $S$ is a $\theta$-subgroup so $\n{G}{S} \not= G$
    and hence $\n{G}{S}$ possesses a unique maximal $\theta$-subgroup,
    $\theta(\n{G}{S})$.
    Lemmas~\ref{md:7} and \ref{md:6} supply a contradiction.
    We deduce that either $\theta_{n\fancyP}$ is not complete,
    in which case $\theta = \theta_{n\fancyP}$;
    or $\theta_{n\fancyP}(G) = 1$,
    in which case $\theta_{n\fancyP} = 1$.

    (b). Let $\fancyP$ be the group theoretic property ``is solvable''.
    Suppose that $\theta_{n\fancyP} = 1$.
    Let $X$ be a $\theta(A)$-invariant solvable $\theta$-subgroup.
    Let $a \in A\nonid$.
    Then, as $\cz{X}{A}$ is solvable,
    we have
    $\cz{X}{a} \leq \op{n\fancyP}{\theta(a);A} = \theta_{n\fancyP}(a) = 1$.
    Since $A$ is noncyclic,
    it follows that $X = 1$,
    so the first assertion holds.
    Suppose that $\theta = \theta_{n\fancyP}$.
    Let $a \in A\nonid$.
    Then $\theta(A) = \cz{\theta(a)}{A} = \cz{\theta_{n\fancyP}(a)}{A}$
    so $\theta(A)$ is solvable.
    Let $X$ be a $\theta$-subgroup.
    Then $\cz{X}{A} \leq \theta(A)$ so $\cz{X}{A}$ is solvable.
    The conclusion follows from \ICiteTheoremKfour.
\end{proof}

\section{A theorem of Gorenstein and Lyons}\label{gor}
We will provide an alternate proof of a special case
of the Nonsolvable Signalizer Functor Theorem due
to Gorenstein and Lyons \cite{GL}.
It is an application of the main result of \S\ref{p}.
Throughout this section,
we assume the following.
\begin{Hypothesis}\label{gor:1}\mbox{}
    \begin{itemize}
        \item   $r$ is a prime and $A$ is an elementary abelian $r$-group
                with rank at least $3$.
        \item   $A$ acts on the group $G$.
        \item   $\theta$ is an $A$-signalizer functor on $G$.
        \item   $\theta(a)$ is a $K$-group for all $a \in A\nonid$.
    \end{itemize}
\end{Hypothesis}
\noindent We shall prove:
\begin{Theorem}[Gorenstein-Lyons]\label{gor:2}
    Assume that $A$ acts trivially on $\compsol{\theta(a)}$
    for all $a \in A\nonid$.
    Then $\theta$ is complete.
\end{Theorem}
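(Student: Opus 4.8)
The plan is to argue by contradiction, taking $\theta$ to be a counterexample for which $\sum_{a \in A\nonid}\card{\theta(a)}$ is minimal. The standard signalizer functor reductions then give $G = \gen{\theta(a)}{a \in A\nonid}$, that no nontrivial $\theta$-subgroup is normal in $G$, and that every proper $A$-invariant subgroup $H$ possesses, with its restricted functor, a unique maximal $A$-invariant $\theta$-subgroup $\theta(H)$. One should check that the Gorenstein--Lyons hypothesis descends along these reductions: it merely asserts that $\layersol{\theta(a)}$ is the product of the pairwise-normalizing $A$-invariant $\mathrm{sol}$-components of $\theta(a)$, and this is preserved on passing to $\gen{\theta(a)}{a\in A\nonid}$ and to quotients by $A$-invariant normal $\theta$-subgroups.

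Let $\fancyP$ be the property ``is solvable'', which is closed under subgroups, quotients and extensions. By Corollary~\ref{p:3} the map $a \mapsto \op{\mathrm{sol}}{\theta(a);A}$ is an $A$-signalizer functor $\theta_{\mathrm{sol}}$; being solvable, it is complete by the Solvable Signalizer Functor Theorem (here $A$ has rank at least $3$), and $S := \theta_{\mathrm{sol}}(G)$ is an $A$-invariant solvable $\theta$-subgroup. By Theorem~\ref{nap:4}, $a \mapsto \op{n\mathrm{sol}}{\theta(a);A}$ likewise defines an $A$-signalizer functor $\theta_{n\mathrm{sol}}$. Now apply Theorem~\ref{md:8}, the McBride Dichotomy (whose proof uses only the two consequences of minimality just recorded): either
\begin{itemize}
    \item[(I)] $\theta_{n\mathrm{sol}} = 1$ and $G$ has no nontrivial $\theta(A)$-invariant solvable $\theta$-subgroup; or
    \item[(II)] $\theta = \theta_{n\mathrm{sol}}$, $\theta(A)$ is solvable, and every nonsolvable composition factor of every $\theta$-subgroup lies in $\badfour$.
\end{itemize}
In either alternative some $\theta(a)$ is nonsolvable, since otherwise $\theta$ would be a solvable $A$-signalizer functor and hence complete.

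In case~(I), a solvable group is a near $(A,\mathrm{sol})$-group, so $\theta_{\mathrm{sol}} = 1$ as well; in particular $\op{\mathrm{sol}}{\theta(a)} = 1$, so $\gfitt{\theta(a)} = \layerr{\theta(a)}$ is a product $L_{1} \times\cdots\times L_{m}$ of nonabelian simple groups with $\theta(a)$ embedded in $\aut{\layerr{\theta(a)}}$. By hypothesis each $L_{i} \in \compp{\theta(a)} = \compsol{\theta(a)}$ is $A$-invariant; by \ICiteTheoremKone\ $A$ acts on each $L_{i}$ with cyclic image, while its kernel on $\layerr{\theta(a)}$ is trivial, so $m \geq 3$ and the kernel $A_{i}$ of $A$ on any single $L_{i}$ is noncyclic. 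Using \ICiteTheoremKfour\ to analyse the faithful cyclic action of $A/A_{i}$ on $L_{i}$, I would locate inside $\cz{\layerr{\theta(a)}}{A}$ (or, if $A$ centralizes some $L_{i}$, inside the product of the $\theta(A)$-orbit of that $L_{i}$) a nontrivial $A$-invariant $\theta(A)$-normalized solvable subgroup --- a $\theta$-subgroup --- contradicting~(I). In case~(II), let $K$ be an $\mathrm{sol}$-component of a nonsolvable $\theta(a)$; by hypothesis $K$ is $A$-invariant, and $X := K/\op{\mathrm{sol}}{K}$ is quasisimple with its nonabelian composition factor in $\badfour$. Since $A$ acts coprimely, $X$ is an $r'$-group, so \ICiteTheoremKone\ forces the image of $A$ in $\aut{X}$ to have order $1$ or $r$. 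If that image is trivial then $[K,A] \leq \op{\mathrm{sol}}{K}$, whence $K = \cz{K}{A}\op{\mathrm{sol}}{K}$ by coprime action and $\cz{K}{A}$ maps onto the nonsolvable group $X$; but $\cz{K}{A} \leq \cz{\theta(a)}{A} = \theta(A)$ is solvable, a contradiction. If the image has order $r$, with noncyclic kernel $A_{0}$, then $\cz{K}{A_{0}}$ maps onto $X$ and $A/A_{0}$ acts on the nonsolvable $K$-group $\cz{K}{A_{0}}$ as a group of order $r$ with solvable fixed points; I would then feed this configuration back through the coprime action lemmas, \ICiteTheoremKfour, and the fact that $\theta = \theta_{n\mathrm{sol}}$ is near solvable, together with the $A$-invariance of every $\mathrm{sol}$-component, to reach the same contradiction. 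With both alternatives excluded, no counterexample exists.

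The step I expect to be the main obstacle is the last part of case~(II): the members of $\badfour$ are precisely the simple $K$-groups admitting a coprime prime-order automorphism with solvable centralizer, and ruling out this field-automorphism configuration when $A$ has rank at least $3$ is the genuinely delicate point in every signalizer functor argument of this type. The Gorenstein--Lyons hypothesis --- that $A$ fixes every $\mathrm{sol}$-component of each $\theta(a)$ --- is exactly what one expects to make this case a bounded computation rather than forcing the full strength of McBride's analysis.
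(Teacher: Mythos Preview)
Your route through the McBride Dichotomy is not the paper's, and as written it does not close. In Case~(I) the claim that $A$ has trivial kernel on $\layerr{\theta(a)}$ is unjustified (nothing in the setup forbids some $b\in A\nonid$ from centralising $\theta(a)$), and even granting $m\geq 3$ the sentence ``I would locate \ldots\ a nontrivial $A$-invariant $\theta(A)$-normalised solvable subgroup'' is not an argument: the obvious candidate $\cz{\layerr{\theta(a)}}{A}=\prod_i\cz{L_i}{A}$ is solvable only when every $L_i$ on which $A$ acts nontrivially lies in $\badfour$, which Case~(I) does not provide, and if $A$ centralises some $L_i$ then the $\theta(A)$-orbit product you propose is a direct product of nonabelian simple groups, hence not solvable. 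In Case~(II) you isolate the right configuration but stop at ``feed this configuration back through''; you yourself flag this as the main obstacle, and indeed no descent mechanism is visible --- the noncyclic kernel $A_{0}$ gives you another nonsolvable $K$-group $\cz{K}{A_{0}}$ with a coprime order-$r$ automorphism and solvable fixed points, which is exactly the configuration you started with.

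The paper bypasses the Dichotomy entirely and runs a size argument. Using the subfunctor $\psi(a)=[\theta(a),t](\theta(a)\cap D)$ of Lemma~\ref{gor:4} together with minimality (Lemma~\ref{gor:6} is what guarantees the Gorenstein--Lyons hypothesis passes to proper $\theta(A)$-invariant subfunctors), Lemma~\ref{gor:5} shows that in a minimal counterexample every noncyclic composition factor of every $\theta(t)$ already occurs as a simple section of some $\cz{[\theta(a),t]}{t}$. The hypothesis is then exploited once, sharply, in Lemma~\ref{gor:7}: because $t$ normalises every $\mathrm{sol}$-component of $\theta(a)$, any simple section $S$ of $\cz{[\theta(a),t]}{t}$ sits inside a proper subgroup $\cz{L}{t}$ of some component $L$, whence $\card{S}<\card{L/\sol{L}}$. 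Choosing a pair $(b,K)$ with $K\in\compsol{\theta(b)}$ and $\card{K/\sol{K}}$ maximal then gives an immediate contradiction. This strict size drop is the terminating device your Case~(II) lacks, and it is precisely where the Gorenstein--Lyons hypothesis does its work.
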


\noindent First we develop a little general theory.

\begin{Definition}\label{gor:3}
    A \emph{subfunctor} of $\theta$ is an $A$-signalizer functor
    $\psi$ on $G$ with $\psi(a) \leq \theta(a)$ for all $a \in A\nonid$.
    We say that $\psi$ is a \emph{proper subfunctor} if $\psi(a) \not= \theta(a)$
    for some $a \in A\nonid$ and that $\psi$ is \emph{$\theta(A)$-invariant}
    if $\psi(a)$ is normalized by $\theta(A)$ for all $a \in A\nonid$.
\end{Definition}
\begin{Lemma}\label{gor:4}
    Let $t \in A\nonid$,
    set $D = \gen{ \cz{[\theta(a),t]}{t} }{ a \in A\nonid }$
    and define $\psi$ by \[
        \psi(a) = [\theta(a),t](\theta(a) \cap D)
    \]
    for all $a \in A\nonid$.
    \begin{enumerate}
        \item[(a)]  $\psi$ is a $\theta(A)$-invariant subfunctor of $\theta$.
        \item[(b)]  If $\psi$ is complete then so is $\theta$.
    \end{enumerate}
\end{Lemma}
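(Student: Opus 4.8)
The plan is to verify (a) directly from the axioms of a signalizer functor, and to reduce (b) to showing that $\theta(t)$ normalizes the $\psi$-subgroup $\psi(G)$.

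For (a) the formal points are routine. Since $\theta(a)$ is $A$-invariant, hence $t$-invariant, $[\theta(a),t]\le\theta(a)$ and $[\theta(a),t]\normal\theta(a)\listgen{t}$, so $\theta(a)\cap D\le\theta(a)$ normalizes it and $\psi(a)=[\theta(a),t](\theta(a)\cap D)$ is a subgroup of $\theta(a)\le\cz{G}{a}$. As $A$ is abelian it normalizes $\theta(a)$ and $\listgen{t}$, hence $[\theta(a),t]$, each $\cz{[\theta(a'),t]}{t}$, hence $D$ and $\theta(a)\cap D$; thus $\psi(a)$ is $A$-invariant, and the same argument with $\theta(A)$ — which normalizes every $\theta(a')$ and centralizes $t$ — in place of $A$ gives $\theta(A)$-invariance. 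One preliminary remark I will use throughout: each generator $\cz{[\theta(a'),t]}{t}$ of $D$ lies in $\theta(a')\cap\cz{G}{t}\le\theta(t)$ by the functor identity for $\theta$, so $D\le\theta(t)$; hence $\theta(a)\cap D=\cz{D}{a}$, $\psi(a)=[\theta(a),t]\cz{D}{a}$ and $\psi(t)=\cz{D}{t}=D$.

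The substance of (a) is the functor identity $\cz{\psi(a)}{b}\le\psi(b)$ for $a,b\in A\nonid$. Because $[\theta(a),t]\normal\psi(a)$ and $\cz{D}{a}$ maps onto $\psi(a)/[\theta(a),t]$, Coprime Action for $\listgen{b}$ gives $\cz{\psi(a)}{b}=\cz{[\theta(a),t]}{b}\,\cz{\cz{D}{a}}{b}$. The second factor is $D\cap\cz{G}{a}\cap\cz{G}{b}\le\theta(t)\cap\cz{G}{b}\le\theta(b)$, so it lies in $\cz{D}{b}\le\psi(b)$. For the first factor put $Y=\cz{[\theta(a),t]}{b}=[\theta(a),t]\cap\cz{G}{b}\le\theta(a)\cap\cz{G}{b}\le\theta(b)$; as $Y$ is $t$-invariant, $Y=[Y,t]\cz{Y}{t}$, where $[Y,t]\le[\theta(b),t]$ because $Y\le\theta(b)$, while $\cz{Y}{t}\le\cz{[\theta(a),t]}{t}\le D$ by the very definition of $D$ and $\cz{Y}{t}\le\cz{G}{b}$, so $\cz{Y}{t}\le\cz{D}{b}$. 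Hence $Y\le[\theta(b),t]\cz{D}{b}=\psi(b)$, and $\cz{\psi(a)}{b}\le\psi(b)$. The delicate point here is that $D$ is defined precisely so as to absorb the $t$-fixed remainder $\cz{Y}{t}$.

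For (b), suppose $\psi$ is complete and let $W=\psi(G)$, so $W$ is $A$-invariant with $\cz{W}{a}=\psi(a)$ for all $a\in A\nonid$; note $W=\gen{\cz{W}{a}}{a\in A\nonid}=\gen{[\theta(a),t]}{a\in A\nonid}$ and $\cz{W}{t}=\psi(t)=D$. I claim it suffices to show that $\theta(t)$ normalizes $W$. Granting this, $W\theta(t)$ is an $A$-invariant subgroup, and since $W\normal W\theta(t)$ with $W\theta(t)/W\cong\theta(t)/D$, Coprime Action gives $\cz{W\theta(t)}{a}=\cz{W}{a}\cz{\theta(t)}{a}=\psi(a)\cz{\theta(t)}{a}=[\theta(a),t]\cz{\theta(t)}{a}=\theta(a)$, the last equality being the $\listgen{t}$-decomposition of $\theta(a)$; thus $W\theta(t)$ witnesses that $\theta$ is complete. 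To prove the claim I would take the normal closure of $W$ in $W\theta(t)$ and show, using the functor identities for $\psi$ and the structure $W=\gen{[\theta(a),t]}{a\in A\nonid}$, that it is again a $\psi$-subgroup, so that the maximality of $W$ forces $\theta(t)$ to normalize $W$. I expect this normalization step — where the design of $D$ is again what prevents anything $t$-fixed outside $D$ from enlarging a $\psi$-subgroup over $W$ — to be the main obstacle.
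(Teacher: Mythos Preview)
Your proof of (a) is correct and matches the paper's argument line for line: the same coprime-action splitting of $\cz{\psi(a)}{b}$, the same decomposition $Y=[Y,t]\cz{Y}{t}$ of the commutator piece, and the same use of $D$ to absorb $\cz{Y}{t}$. Your auxiliary observations $D\le\theta(t)$, $\theta(a)\cap D=\cz{D}{a}$ and $\psi(t)=D$ are correct and convenient.

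For (b) the paper gives no argument of its own: it simply cites \cite[Corollary~4.3]{F2}. Your reduction to the claim that $\theta(t)$ normalizes $W=\psi(G)$ is correct, and the verification that then $\cz{W\theta(t)}{a}=\theta(a)$ for every $a\in A\nonid$ is fine. The gap is exactly where you place it, but the normal-closure approach you sketch does not close it as stated. If $N=\listgen{W^{g}\mid g\in\theta(t)}$, then for $N$ to be a $\psi$-subgroup you need in particular $\cz{N}{t}\le\psi(t)=D$. However, for each $g\in\theta(t)$ one has $D^{g}=\cz{W}{t}^{g}=\cz{W^{g}}{t}\le\cz{N}{t}$ (since $g$ centralizes $t$), so that condition already forces $D^{g}\le D$ for all such $g$, i.e.\ $\theta(t)\le\n{G}{D}$. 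There is no evident reason for this: conjugation by $g\in\theta(t)$ sends a generator $\cz{[\theta(a),t]}{t}$ of $D$ to $\cz{[\theta(a)^{g},t]}{t}$, and $\theta(a)^{g}$ need not be any $\theta(a')$. Thus the very first instance of your $\psi$-subgroup check is essentially equivalent to what you are trying to prove. You should either supply an independent argument for $\theta(t)\le\n{G}{W}$ or, as the paper does, invoke \cite[Corollary~4.3]{F2}.
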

\begin{proof}
    (a). Let $a \in A\nonid$.
    Then $\cz{[\theta(a),t]}{t}$ is $A\theta(A)$-invariant
    since $\theta(a)$ and $t$ are.
    Thus $D$ is $A\theta(A)$-invariant.
    Now $[\theta(a),t] \normal \theta(a)$ so $\psi(a)$ is a subgroup of $\theta(a)$.
    Again, it is $A\theta(A)$-invariant.
    Let $b \in A\nonid$.
    By \ICiteCoprimeActionProd,
    \[
        \psi(a) \cap \cz{G}{b} = \cz{[\theta(a),t]}{b}\cz{\theta(a) \cap D}{b}.
    \]
    Set $X = \cz{[\theta(a),t]}{b}$.
    Then $X \leq \theta(a) \cap \cz{G}{b} \leq \theta(b)$.
    By \ICiteCoprimeActionComm\ we have \[
        X = [X,t]\cz{X}{t} \leq %
        \listgen{ [\theta(b),t], \theta(b) \cap \cz{[\theta(a),t]}{t} } %
            \leq \psi(b).
    \]
    Trivially, $\cz{\theta(a) \cap D}{b} \leq \theta(b) \cap D \leq \psi(b)$.
    We conclude that $\psi(a) \cap \cz{G}{b} \leq \psi(b)$,
    so $\psi$ is an $A$-signalizer functor.

    (b). This is \cite[Corollary~4.3]{F2}
\end{proof}

\begin{Lemma}\label{gor:5}
    Suppose that:
    \begin{enumerate}
        \item[(i)]  $\theta$ is incomplete.
        \item[(ii)]  $\psi$ is complete whenever $\psi$ is a
                    proper $\theta(A)$-invariant subfunctor of $\theta$.
    \end{enumerate}
    Then the following hold:
    \begin{enumerate}
        \item[(a)]  For each $t \in A\nonid$, \[
                        \theta(t) = \gen{ \cz{[\theta(a),t]}{t} }{ a \in A\nonid }.
                    \]

        \item[(b)]  Let \[
            \mathcal S = \set{ S }{ \mbox{$S$ is a simple section of %
                            $\cz{[\theta(a),t]}{t}$ for some $a,t \in A\nonid$ }}
                    \]
                    and let $\fancyP$ be the group theoretic property defined by:
                    \begin{quote}
                        $H$ is a $\fancyP$-group if and only if every noncyclic
                        composition factor of $H$ is isomorphic to a member of $\mathcal S$.
                    \end{quote}
                    Then $\theta(t)$ is a $\fancyP$-group for all $t \in A\nonid$.
    \end{enumerate}
\end{Lemma}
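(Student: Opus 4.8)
The plan for part~(a) is to feed the given element $t$ into Lemma~\ref{gor:4}. Let $\psi$ be the subfunctor it produces for this $t$, so that $\psi(a) = [\theta(a),t](\theta(a) \cap D)$ with $D = \gen{\cz{[\theta(a),t]}{t}}{a \in A\nonid}$. By Lemma~\ref{gor:4}(a), $\psi$ is a $\theta(A)$-invariant subfunctor of $\theta$. Were $\psi$ proper, hypothesis~(ii) would make $\psi$ complete and then Lemma~\ref{gor:4}(b) would make $\theta$ complete, contrary to~(i). Hence $\psi$ is not proper; that is, $\psi(a) = \theta(a)$ for all $a \in A\nonid$. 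Now evaluate at $a = t$: since $\theta(t) \leq \cz{G}{t}$ we have $[\theta(t),t] = 1$, so $\theta(t) = \psi(t) = \theta(t) \cap D \leq D$. For the reverse inclusion the balance condition gives $\cz{[\theta(a),t]}{t} \leq \theta(a) \cap \cz{G}{t} \leq \theta(t)$ for each $a$, so $D \leq \theta(t)$. Hence $\theta(t) = D$, which is the assertion of~(a).

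For part~(b) I would first pin down the nature of $\fancyP$. Since a simple section of a simple section is again a simple section, $\mathcal{S}$ is closed under passing to simple sections of its members; and, by an easy induction on composition length via a normal subgroup, every simple section of a finite group is a simple section of one of its composition factors. Together these show that a finite group is a $\fancyP$-group precisely when each of its noncyclic \emph{simple sections} lies in $\mathcal{S}$, and in that formulation $\fancyP$ is visibly closed under subgroups, quotients and extensions. In particular $\fancyP$ satisfies Hypothesis~\ref{pc:1}, so $\op{\fancyP}{H}$ is the unique maximal normal $\fancyP$-subgroup of any $H$, and $\fancyP$ meets the hypotheses of Theorem~\ref{p:2}. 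By part~(a) we have $\theta(t) = \gen{\cz{[\theta(a),t]}{t}}{a}$, and each generator is a $\fancyP$-group since by definition of $\mathcal{S}$ all of its simple sections lie in $\mathcal{S}$; but a join of $\fancyP$-subgroups need not be a $\fancyP$-group, so proving that $\theta(t)$ itself is a $\fancyP$-group is the real content of~(b) and the step I expect to be the main obstacle.

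The route I would take --- which is also where the section draws on \S\ref{p} --- is to suppose that some $\theta(t)$ is not a $\fancyP$-group and to consider the functor $\theta_{\fancyP}$ given by $\theta_{\fancyP}(a) = \op{\fancyP}{\theta(a);A}$. By Corollary~\ref{p:3} and Theorem~\ref{p:2} this is an $A$-signalizer functor with $\fancyP$-group values; by Lemma~\ref{p:4}(a), applied with $\theta(A) = \cz{\theta(a)}{A}$, it is $\theta(A)$-invariant; and under our supposition it is a proper subfunctor, hence complete by hypothesis~(ii). One then runs the Bender-style argument of the proof of Theorem~\ref{md:8}(a): the completion $S = \theta_{\fancyP}(G)$ is a $\theta$-subgroup, so $\n{G}{S}$ is proper and has a unique maximal $\theta$-subgroup, and arguments as in Lemmas~\ref{md:6} and~\ref{md:7} then force $\theta$ to be complete, contradicting~(i). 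As in Theorem~\ref{md:8}(a) this really produces a dichotomy --- $\theta_{\fancyP}$ is either trivial or equal to $\theta$ --- so the genuinely delicate point is to rule out $\theta_{\fancyP} = 1$, using that solvable groups are $\fancyP$ (Hypothesis~\ref{md:1}(b)) together with the generation in part~(a) to locate nontrivial $\fancyP$-structure inside each $\theta(a)$; in the remaining case $\theta = \theta_{\fancyP}$ one has $\theta(t) = \op{\fancyP}{\theta(t);A}$, a $\fancyP$-group, directly. A more computational alternative bypasses $\theta_{\fancyP}$: reduce modulo $\op{\fancyP}{\theta(t)}$, where the solvable radical and the centraliser of the layer both vanish, and then use the component lemmas (Lemma~\ref{pc:4}(d),(g), Lemma~\ref{ap:4}) and coprime action of $\listgen{t}$ to show every component $K$ of $\theta(t)$ has $K/\zz{K} \in \mathcal{S}$; there the awkward case is a component lying diagonally across the generating family rather than inside a single $\cz{[\theta(a),t]}{t}$.
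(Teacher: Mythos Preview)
Your argument for part~(a) is correct and is exactly the paper's proof, just written out with a little more detail on the two inclusions $\theta(t)\leq D$ and $D\leq\theta(t)$.

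For part~(b) you have assembled every ingredient and then walked past the door. You correctly check that $\fancyP$ is closed under subgroups, quotients and extensions, so that Theorem~\ref{p:2} applies, and you observe that each generator $\cz{[\theta(a),t]}{t}$ is a $\fancyP$-group. The one observation you are missing is that each such generator is also $A\theta(A)$-invariant: it is visibly $A$-invariant, and $\theta(A)=\cz{\theta(a)}{A}$ normalizes $[\theta(a),t]$ and centralizes $t$, hence normalizes $\cz{[\theta(a),t]}{t}$. Since $\theta(A)=\cz{\theta(t)}{A}$, every generator is therefore an $A\cz{\theta(t)}{A}$-invariant $\fancyP$-subgroup of $\theta(t)$, so by definition it lies in $\op{\fancyP}{\theta(t);A}$, which is a $\fancyP$-group by Theorem~\ref{p:2}. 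Thus $\theta(t)=D\leq \op{\fancyP}{\theta(t);A}$ is a $\fancyP$-group. That is the paper's proof in its entirety; your remark that ``a join of $\fancyP$-subgroups need not be a $\fancyP$-group'' is exactly the problem Theorem~\ref{p:2} was designed to solve, and here it solves it in one line.

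Your proposed detour via $\theta_{\fancyP}$ and a McBride-dichotomy argument is therefore unnecessary, and as written it also has a gap: Lemma~\ref{md:7} is proved for $\theta_{n\fancyP}$ using Theorem~\ref{nap:4}(c), and there is no analogue in the paper for $\theta_{\fancyP}$ that would let you push $\op{\fancyP,E}{\theta(B)}$ into $\n{G}{\theta_{\fancyP}(G)}$. Your second alternative, working modulo $\op{\fancyP}{\theta(t)}$ and chasing components, could perhaps be made to work but is far harder than the intended one-line application of Theorem~\ref{p:2}.
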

\begin{proof}
    (a). Adopt the notation defined in the statement of Lemma~\ref{gor:5}.
    Since $\theta$ is incomplete,
    it follows from (ii) and Lemma~\ref{gor:4} that $\theta = \psi$.
    Then $\theta(t) = \psi(t) = [\theta(t),t](\theta(t) \cap D) = D$.

    (b). For each $a \in A\nonid$,
    $\cz{[\theta(a),t]}{t}$ is an $A\theta(A)$-invariant $\fancyP$-subgroup
    of $\theta(t)$.
    Now $\theta(A) = \cz{\theta(t)}{A}$ so Theorem~\ref{p:2} implies that
    $\gen{ \cz{[\theta(a),t]}{t} }{ a \in A\nonid }$ is a $\fancyP$-group.
    Then (a) implies that $\theta(t)$ is a $\fancyP$-group.
\end{proof}

\begin{Lemma}\label{gor:6}
    Suppose that $A$ acts on the $K$-group $H$,
    that $t \in A\nonid$ and that $t$ acts trivially on $\compsol{H}$.
    Then $t$ acts trivially on $\compsol{M}$
    whenever $M$ is an $A\cz{H}{A}$-invariant subgroup of $H$.
\end{Lemma}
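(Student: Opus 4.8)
The plan is to reduce to the case where $M$ is an $A$-component of $H$ and then exploit Theorem~\ref{lglob:5}(a) (with $\fancyP = \mathrm{sol}$) to show that any $(A,\mathrm{sol})$-component $K$ of $M$ with $K = [K,t]$ is actually an $(A,\mathrm{sol})$-component of $H$, on which $t$ acts trivially by hypothesis. First I would replace $M$ by $M\cz{H}{A}$, which is still $A\cz{H}{A}$-invariant and has the same $(A,\mathrm{sol})$-components (using Lemma~\ref{ap:4}(a),(b) and the fact that $M \normal M\cz{H}{A}$), so we may assume $\cz{H}{A} \leq M$. Now let $K \in \compsol{M}$; we must show $K^t = K$. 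Since $A$ is abelian, $t$ permutes $\compsol{M}$, and the orbit $\{K, K^t, \ldots, K^{t^{r-1}}\}$ generates an $(A,\mathrm{sol})$-component (in fact an $\langle t\rangle$-invariant one); it therefore suffices to treat the $\langle t \rangle$-component $\widehat{K} = \langle K^{\langle t\rangle}\rangle$ of $M$ and show $t$ acts trivially on $\compsol{\widehat{K}}$, i.e.\ that $K = K^t$. Equivalently, writing $\widehat{K}$ for the $\langle t\rangle$-component and noting $\widehat{K} = [\widehat{K},t]$ or $[\widehat{K},t] \leq \op{\mathrm{sol}}{\widehat{K}}$ — in the latter case $\widehat{K}/\op{\mathrm{sol}}{\widehat{K}}$ is a quotient on which $t$ is trivial, hence $K = K^t$ already — we are reduced to the case $\widehat{K} = [\widehat{K},t]$.

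Next I would apply Theorem~\ref{lglob:5}(a), taking the ambient group to be $H$, the property $\fancyP = \mathrm{sol}$ (which satisfies Hypothesis~\ref{pc:1} and for which every $\fancyP$-group is trivially solvable, so Hypothesis~\ref{lglob:2} holds with this $a = t$ once we check $H$ is $\cz{H}{t}$-invariant, which is automatic), the subgroup $M\cz{H}{A}$ in the role of $H$ (it is $A\cz{H}{A}$-invariant, hence $A\cz{H}{t}$-invariant), and $\widehat{K}$ — or rather its $(A,\mathrm{sol})$-component hull — in the role of $K$. More precisely: let $\widetilde{K} \in \comp{A,\mathrm{sol}}{M\cz{H}{A}}$ be the $(A,\mathrm{sol})$-component containing $\widehat{K}$; since $\widehat{K} = [\widehat{K},t]$ one checks $\widetilde{K} = [\widetilde{K},t]$. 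Theorem~\ref{lglob:5}(a) then yields $\widetilde{K} \in \comp{A,\mathrm{sol}}{H}$. Passing to $\br H = H/\sol H$, the image $\br{\widetilde K}$ is an $A$-component of $\br H$, and $\widetilde K$ corresponds (via Lemma~\ref{ap:4}(j) and the bijection $\compsol{H} \to \compp{\br H}$) to an element of $\compsol{H}$. By hypothesis $t$ acts trivially on $\compsol{H}$, so $t$ normalizes $\widetilde K$ and every $\mathrm{sol}$-component of $\widetilde K$; in particular $K^t = K$.

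The main obstacle I anticipate is the bookkeeping needed to legitimately invoke Theorem~\ref{lglob:5}(a): that theorem is stated for a subgroup $H$ that is $\cz{G}{a}$-invariant, and here the roles are $G \rightsquigarrow H$, $H \rightsquigarrow M\cz{H}{A}$, $a \rightsquigarrow t$, so I must verify $M\cz{H}{A}$ is $\cz{H}{t}$-invariant (it is, since $M$ and $\cz{H}{A}$ are $\cz{H}{A}$-invariant and $\cz{H}{t} \geq \cz{H}{A}$ need not hold — rather $\cz{H}{A} \leq \cz{H}{t}$, giving what we need only after replacing $A$ by $\langle t\rangle$ in the reduction; as the theorem's hypothesis is preserved under this replacement, this is fine). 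A second delicate point is the dichotomy "$\widehat K = [\widehat K, t]$ or $[\widehat K,t] \leq \op{\mathrm{sol}}{\widehat K}$" for an $\langle t\rangle$-component: this follows because $\widehat K / \op{\mathrm{sol}}{\widehat K}$ is $\langle t\rangle$-quasisimple and $t$ acts on it either trivially (second alternative, and then $K = K^t$ at once since $K\op{\mathrm{sol}}{\widehat K}/\op{\mathrm{sol}}{\widehat K}$ is a component fixed by $t$, which forces $K = K^t$ by the bijection of Lemma~\ref{ap:4}(j)) or nontrivially, in which case $\widehat K = [\widehat K, t]\op{\mathrm{sol}}{\widehat K}$ and perfectness of $\widehat K$ gives $\widehat K = [\widehat K, t]$. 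Everything else is a routine application of the coprime-action machinery and Lemma~\ref{prel:1}-style arguments already in hand.
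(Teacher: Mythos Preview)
There is a genuine gap. Your key step is the invocation of Theorem~\ref{lglob:5}(a), but its hypothesis is not satisfied. That theorem requires Hypothesis~\ref{lglob:2}, which demands that the subgroup playing the role of $H$ there be $\cz{G}{a}$-invariant. In your application the ambient group is $H$, the element is $t$, and the subgroup is $M$ (or $M\cz{H}{A}$); so you would need $M$ to be $\cz{H}{t}$-invariant. The lemma only gives you that $M$ is $A\cz{H}{A}$-invariant, and since $\cz{H}{A} \leq \cz{H}{t}$, invariance under the smaller group does not yield invariance under the larger one. Your proposed fix of ``replacing $A$ by $\langle t\rangle$'' goes the wrong way: it would require $M$ to be $\langle t\rangle\cz{H}{t}$-invariant, which is exactly what is missing.

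This is not bookkeeping but the heart of the difficulty, and it is precisely why the paper does not use Theorem~\ref{lglob:5}(a) here. Instead the paper reduces (via $H \to [H,t]$ and then $H \to H/\sol{H}$) to the case $\sol{H}=1$ and $H=[H,t]$; uses the hypothesis that $t$ fixes every component together with the Schreier property to force $K \leq \layerr{H}$; then applies Lemma~\ref{lglob:6}, which needs only the weaker Hypothesis~\ref{lglob:1}, to embed $K$ in some $K^{*} \in \comp{A}{H}$. After reducing to $H = K^{*}$ ($A$-simple), the argument finishes with a structural analysis of $A\cz{H}{A}$-invariant subgroups of an $A$-simple group via \ICiteLemmasAQSsixANDAQSeight\ and \ICiteLemmaAQSfive. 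Your shortcut through Theorem~\ref{lglob:5}(a) would indeed be cleaner if it applied, but the missing $\cz{H}{t}$-invariance is exactly the obstruction that forces the longer route.
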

\begin{proof}
    Assume false and choose $K_{0} \in \compsol{M}$
    with $K_{0} \not= K_{0}^{t}$.
    Set $K = \listgen{K_{0}^{A}} \in \compasol{M}$.
    Now $[K,t]$ is an $A$-invariant nonsolvable normal subgroup of $K$
    so $K = [K,t] \leq [H,t]$.
    Since $[H,t] \normal H$ we have
    $\compsol{[H,t]} \subseteq \compsol{H}$,
    hence we may assume that $H = [H,t]$.
    Passing to the quotient $H/\sol{H}$ we may also assume that $\sol{H} = 1$.
    Then $\compsol{H} = \compp{H}$ and $\cz{H}{\layerr{H}} = 1$.

    Since $H = [H,t]$ and $t$ acts trivially on $\compp{H}$ it follows that
    every component of $H$ is normal in $H$.
    As $\cz{H}{\layerr{H}} = 1$,
    the Schreier Property implies that $H/\layerr{H}$ is solvable.
    Now $K$ is perfect so $K \leq \layerr{H}$ and then Lemma~\ref{lglob:6}
    implies there exists $K^{*}$ with $K \leq K^{*} \in \comp{A}{H}$.
    Hence we may assume that $K^{*} = H$,
    so that $H$ is $A$-simple.

    Without loss, $\cz{A}{H} = 1$.
    Set \[
        A_{\infty} = \ker(A \longrightarrow \sym{\compp{H}}),
    \]
    so that $t \in A_{\infty}$.
    Since $M$ is $A\cz{H}{A}$-invariant and nonsolvable,
    it follows from \ICiteLemmasAQSsixANDAQSeight\ that either
    $M = \cz{H}{B}$ for some $B \leq A$ with $B \cap A_{\infty} = 1$
    or $M \leq \cz{H}{A_{\infty}}$.
    The second possibility does not hold since $t \in A_{\infty}$
    and $K = [K,t] \leq M$.
    Thus the first possibility holds.
    \ICiteLemmaAQSfive\ implies that $M$ is $A$-simple.
    Since $K \in \comp{A}{M}$ we have $K = M$,
    so $K = \cz{H}{B}$.
    Then the components of $K$ correspond to the orbits
    of $B$ on $\compp{H}$.
    Since $t$ is trivial $\compp{H}$ it normalizes each orbit of $B$
    and hence each component of $K$.
    This contradiction completes the proof.
\end{proof}

\begin{Lemma}\label{gor:7}
    Suppose that $A$ acts coprimely on the group $H$.
    Let $t \in A$ and suppose that $t$ acts trivially
    on $\compsol{H}$.
    If $S$ is a simple section of $\cz{[H,t]}{t}$
    then there exists $L \in \compsol{H}$ with
    $\card{S} < \card{L/\sol{L}}$.
\end{Lemma}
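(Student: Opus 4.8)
The plan is to argue directly, after two routine reductions, by exploiting the Schreier property. Throughout I take $S$ to be nonabelian, this being the case that is needed below.

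First I would replace $H$ by $[H,t]$. This costs nothing: $\cz{[H,t]}{t}$ is unchanged, since $[[H,t],t]=[H,t]$ by Coprime Action; and $\compsol{[H,t]}\subseteq\compsol H$ by Lemma~\ref{pc:4}(a), so $t$ still acts trivially on $\compsol{[H,t]}$. Next I would pass to $\br H=H/\sol H$. By Lemmas~\ref{pc:3}(b) and~\ref{pc:4}(j) the map $L\mapsto\br L$ is a bijection $\compsol H\to\compsol{\br H}=\compp{\br H}$ with $\br L\isomorphic L/\sol L$; also $S$ persists as a nonabelian simple section of $\cz{\br H}{\br t}=\br{\cz H t}$, since were it killed by the quotient it would be a section of the solvable group $P\cap\sol H$ (writing $S=P/Q$), which is absurd; and $\br H=[\br H,\br t]$. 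So I may assume $H=[H,t]$ and $\sol H=1$; then $\compsol H=\compp H$ consists of nonabelian simple components $C$, with $\card{C/\sol C}=\card C$, and it remains to produce a component of order exceeding $\card S$.

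The one substantive point is that $H$ normalizes every component. Indeed, for $h\in H$ the elements $h$ and $h^{t}$ induce the same permutation of $\compp H$, because $t$ fixes every component and so centralizes this permutation action; hence $[h,t]$ lies in $N_{0}:=\bigcap_{C\in\compp H}\n H C$, so $[H,t]\leq N_{0}$ and therefore $H=N_{0}$. Since $\sol H=1$ we have $\cz H{\layerr H}=1$, so $H$ embeds in $\prod_{C}\aut C$ with $\layerr H$ mapping onto $\prod_{C}\operatorname{Inn}(C)$; thus $H/\layerr H$ embeds in $\prod_{C}\out C$, which is solvable by the Schreier property. Consequently $\cz H t/\cz{\layerr H}{t}$, being isomorphic to a subgroup of $H/\layerr H$, is solvable, and the nonabelian simple section $S$ of $\cz H t$ must therefore be a section of $\cz{\layerr H}{t}$. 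By Coprime Action $\cz{\layerr H}{t}=\prod_{C\in\compp H}\cz C t$, so $S$ is a section of $\cz C t$ for some component $C$. If $[C,t]\neq1$ then $\cz C t$ is a proper subgroup of the simple group $C$, so $\card S\leq\card{\cz C t}<\card C=\card{C/\sol C}$ and $C$ is the required component. If instead $[C,t]=1$, then $S\isomorphic C$ and $C$ is a component of $H$, centralized by $t$ and (by the above) normal in $H$; but then $t$ acts trivially on $H/\cz H C$, so $H=[H,t]\leq\cz H C$ and $C\leq\zz H$, contradicting that $C$ is nonabelian. Hence the second alternative cannot occur and we are done.

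The two reductions and the final dichotomy are mechanical; the step I expect to need the most care is the observation that once $H=[H,t]$ and $t$ fixes every component, $H$ normalizes all its components. It is this that both forces $H/\layerr H$ to be solvable (so that $S$ is forced to originate in the layer) and makes a $t$‑centralized component normal, hence central, hence nonexistent — which is exactly what upgrades the order comparison to a strict inequality.
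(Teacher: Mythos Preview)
Your proof is correct and follows essentially the same route as the paper's: reduce to $H=[H,t]$ and $\sol{H}=1$, use that $t$ fixing every component forces $H$ to normalize each component, invoke Schreier to get $H/\layerr{H}$ solvable so that $S$ lives in some $\cz{C}{t}$, and rule out $\cz{C}{t}=C$ by the centrality contradiction. The only blemish is the stray clause ``then $S\isomorphic C$'' in the $[C,t]=1$ case, which is neither justified nor used; your actual contradiction there comes from $C\leq\zz{H}$ and does not need it.
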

\begin{proof}
    Coprime Action implies that $[H,t,t] = [H,t]$ and as $[H,t] \normal H$
    we have $\compsol{[H,t]} \subseteq \compsol{H}$
    so we may suppose that $H = [H,t]$.
    We may also pass to the quotient $H/\sol{H}$
    to suppose that $\sol{H} = 1$.

    Since $t$ acts trivially on $\compp{H}$ and $H = [H,t]$
    it follows that every component of $H$ is normal in $H$.
    Let $L_{1},\ldots,L_{n}$ be the components of $H$,
    so $\layerr{H} = L_{1} \times\cdots\times L_{n}$.
    As $\sol{H} = 1$ we have $\cz{H}{\layerr{H}} = 1$ and the
    Schreier Property implies that $H/\layerr{H}$ is solvable.
    In particular,
    $\cz{H}{t}/\cz{\layerr{H}}{t}$ is solvable so
    $S$ is isomorphic to a simple section of $\cz{\layerr{H}}{t}$.
    Now $\cz{\layerr{H}}{t} = \cz{L_{1}}{t} \times\cdots\times \cz{L_{n}}{t}$.
    Thus $S$ is isomorphic to a simple section of $\cz{L_{i}}{t}$
    for some $i$.
    Note that $\cz{L_{i}}{t} \not= L_{i}$ since otherwise,
    as $L_{i} \normal H\listgen{t}$ and $H = [H,t]$,
    we would have $L_{i} \leq \zz{H}$.
    Thus $\card{S} < \card{L_{i}}$ and the proof is complete.
\end{proof}

\begin{proof}[Proof of Theorem~\ref{gor:2}]
    Assume false and let $\theta$ be a minimal counterexample.
    By Lemma~\ref{gor:6}, the hypotheses of Lemma~\ref{gor:5}
    are satisfied.
    Note that a group $X$ is nonsolvable
    if and only if $\compsol{X} \not= \emptyset$.
    By the Solvable Signalizer Functor Theorem,
    there exists a pair $(b,K)$ with
    $b \in A\nonid$ and $K \in \compsol{\theta(b)}$.
    Choose such a pair with $\card{K/\sol{K}}$ maximal.
    By Lemma~\ref{gor:5} there exists $a,t \in A\nonid$ such that
    $K/\sol{K}$ is isomorphic to a simple section
    of $\cz{[\theta(a),t]}{t}$.
    Lemma~\ref{gor:7} implies $\card{K/\sol{K}} < \card{L/\sol{L}}$
    for some $L \in \compsol{\theta(a)}$.
    This contradicts the choice of $(b,K)$
    and completes the proof.
\end{proof}

\bibliographystyle{amsplain}

\end{document}